\documentclass[a4paper,11pt]{amsart}

\usepackage{amsmath,amssymb,amsthm,mathtools,mathrsfs}
\usepackage[margin=25mm]{geometry}
\usepackage{enumitem}
\usepackage[colorlinks=true,linkcolor=blue,urlcolor=blue]{hyperref}
\usepackage{booktabs}
\usepackage{longtable}
\usepackage{array}
\usepackage{graphicx}
\newtheorem{theorem}{Theorem}[section]
\newtheorem{proposition}[theorem]{Proposition}
\newtheorem{lemma}[theorem]{Lemma}
\newtheorem{corollary}[theorem]{Corollary}
\theoremstyle{definition}
\newtheorem{definition}[theorem]{Definition}
\newtheorem{remark}[theorem]{Remark}

\newcommand{\Q}{\mathbb{Q}}
\newcommand{\Z}{\mathbb{Z}}
\newcommand{\Tr}{\operatorname{Tr}}
\newcommand{\Nm}{\operatorname{N}}
\newcommand{\Aut}{\operatorname{Aut}}
\newcommand{\PSL}{\operatorname{PSL}}

\newcommand{\GL}{\operatorname{GL}}

\newcommand{\Span}{\operatorname{Span}}
\newcommand{\Cl}{\operatorname{Cl}}

\title{Two Extremal Even Unimodular Lattices of Rank $88$}
\author{Ryotaro Sakamoto}
\date{25 September 2026}

\begin{document}

\begin{abstract}
In this paper, we construct two extremal positive definite  even unimodular lattices of rank $88$ using a Hermitian tensor product over $\mathbb{Q}(\sqrt{-23})$,  thereby extending the known existence range for such lattices from rank \(80\) to rank \(88\).
\end{abstract}

\maketitle

\section{Introduction}
\label{sec:introduction}

A positive definite even unimodular $\mathbb Z$-lattice $L$ can exist only if its rank $r$ is divisible by $8$.
Moreover, the bound
\[
 \min(L)\le 2\left\lfloor\frac{r}{24}\right\rfloor+2
\]
is known (cf.~\cite{MallowsOdlyzkoSloane1975}).
A positive definite even unimodular lattice for which equality holds in this bound is called \emph{extremal}.
Extremal lattices are highly constrained by the theory of modular forms, and their minimal vectors often exhibit remarkable arithmetic, geometric, and combinatorial structures, including spherical design properties.
On the other hand, extremal lattices do not exist in arbitrarily high rank: Mallows--Odlyzko--Sloane~\cite{MallowsOdlyzkoSloane1975} proved nonexistence in sufficiently high rank, and Jenkins--Rouse~\cite{JenkinsRouse2011} obtained the explicit bound $r\le 163264$. Thus, determining the ranks in which extremal lattices exist and constructing explicit examples are fundamental problems in lattice theory.

We briefly recall what is known about extremal lattices. In ranks at most $24$, the extremal lattices are precisely $E_8$, $E_8\perp E_8$, $D_{16}^+$, and the Leech lattice $\Lambda_{24}$. The situation changes  in higher ranks: there are more than $10^7$ pairwise nonisomorphic extremal lattices of rank $32$, and more than $10^{51}$ of rank $40$; see, for example, \cite{King2003,NebeVenkov2013}.

Leech--Sloane used ternary self-dual codes to construct two extremal lattices of rank $48$,
$P_{48p}$ and $P_{48q}$~\cite{LeechSloane1971}.
Subsequently, Quebbemann introduced a general lattice construction and used it to construct a rank $64$
extremal lattice~\cite{Quebbemann1984}.
Ozeki developed a method for constructing even unimodular lattices from ternary self-dual codes,
and in particular obtained an extremal lattice of rank $56$~\cite{Ozeki1989}.
Batut--Quebbemann--Scharlau also obtained another rank $56$ example by a construction using cyclotomic fields
\cite{BatutQuebbemannScharlau1995}.
Later, Nebe constructed a third rank $48$ extremal lattice by a cyclo-quaternionic construction,
namely $P_{48n}$~\cite{Nebe1998}.
Meanwhile, Bachoc--Nebe used a structure related to the Mathieu group $M_{22}$
to construct two rank $80$ extremal lattices of minimum $8$
\cite{BachocNebe1998}.
At that time, the existence of an extremal lattice of rank $72$ was still open, so rank $80$ examples had been found before any rank $72$ example.

Nebe subsequently filled this gap.
She worked over the ring of integers of $\mathbb{Q}(\sqrt{-7})$ and
considered the Hermitian tensor product of the Barnes lattice and the Leech lattice, thereby
constructing a rank $72$ even unimodular lattice $\Gamma_{72}$ of minimum $8$
\cite{Nebe2012}.
This established the existence of extremal  lattices
in every positive dimension divisible by $8$ up to $80$.
Subsequent examples include a fourth rank $80$ example due to Watkins~\cite{Watkins2012},
a fourth rank $48$ example $P_{48m}$ due to Nebe~\cite{Nebe2014},
and a new rank $64$ construction using a generalized quadratic residue code due to Shimada
\cite{Shimada2018}.
More recently, Shimada showed that Quebbemann's construction produces many  rank $64$ extremal lattices \cite{Shimada2022}.

Following Nebe's construction of the rank \(72\) extremal lattice \(\Gamma_{72}\), no extremal lattices of rank greater than \(80\) were known. In this paper, we construct two non-isomorphic extremal lattices of rank \(88\), thereby raising the largest rank for which an extremal lattice is known from \(80\) to \(88\).

\begin{theorem}[Corollary~\ref{cor:extremal} and Proposition~\ref{prop:L0-extremal}]
\label{thm:extremal88}
There exist two non-isomorphic positive definite even unimodular extremal lattices of rank $88$.
\end{theorem}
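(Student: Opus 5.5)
The plan is to imitate Nebe's construction of $\Gamma_{72}$, replacing $\mathbb{Q}(\sqrt{-7})$ by $K=\mathbb{Q}(\sqrt{-23})$ with ring of integers $\mathcal{O}=\Z[\alpha]$, $\alpha=(1+\sqrt{-23})/2$. Here $23$ is ramified and $\Cl(\mathcal{O})\cong\Z/3$. The target is a rank $88$ lattice $L=A\otimes_{\mathcal{O}}B$. For the $\Z$-rank to be $88$ we need $\operatorname{rank}_{\mathcal{O}}A\cdot\operatorname{rank}_{\mathcal{O}}B=44$. The natural choice is to take $B$ to be the Leech lattice $\Lambda_{24}$ with a Hermitian $\mathcal{O}$-structure of $\mathcal{O}$-rank $12$; this uses an automorphism of order $23$ acting fixed-point-freely on an appropriate sublattice, or equivalently a structure coming from $2\cdot M_{12}$ or $\PSL_2(23)$ acting on $\Lambda_{24}$. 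That forces $A$ to have $\mathcal{O}$-rank $\tfrac{44}{12}$, which is not an integer. So instead I would take $B$ of $\mathcal{O}$-rank $11$ or $22$ and $A$ of $\mathcal{O}$-rank $4$ or $2$. Concretely, I would choose $A$ a binary Hermitian $\mathcal{O}$-lattice of rank $2$ (a $\Z$-lattice of rank $4$) whose trace form has minimum $4$ and determinant a power of $23$. I would choose $B$ a Hermitian $\mathcal{O}$-lattice of rank $22$ (a $\Z$-lattice of rank $44$) built from the $\PSL_2(23)$- or $M_{23}$-structure, such as a $23$-modular lattice of rank $44$ with large minimum. The scaling is arranged so that $\Tr\circ h_A\otimes h_B$ is even unimodular: the different $\sqrt{-23}\,\mathcal{O}$ and the partial dualities must cancel, as in Nebe's setup. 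The two non-isomorphic lattices would come from two choices, either two classes of $A$ in the genus (using the class group of order $3$, ideal classes $\mathfrak{p}$ and $\mathfrak{p}^{-1}$ giving non-equivalent choices) or $L$ and a neighbour $L_0$, consistent with the labels Corollary~\ref{cor:extremal} and Proposition~\ref{prop:L0-extremal}.

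The key steps, in order, are as follows. First, construct the Hermitian structures explicitly and verify integrality, evenness and unimodularity of $L$ by a determinant and discriminant computation using $\det_{\Z}(A\otimes B)=\det(A)^{\operatorname{rank}B}\det(B)^{\operatorname{rank}A}$ in the Hermitian sense, together with local checks at $2$ and $23$. Unimodularity and rank $88$ give $\min(L)\le 2\lfloor 88/24\rfloor+2=8$, so extremality means showing $\min(L)\ge 8$. Second, use the standard tensor bound: for $x=\sum a_i\otimes b_i$ of $\mathcal{O}$-tensor rank $k$, the norm is bounded below in terms of $k$, $\min A$, $\min B$ and the Hermitian minima of $k$-dimensional sublattices (Coulangeon's lemma, as in Nebe's proof). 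Pure tensors have norm at least $\min(A)\min(B)/\text{scale}\ge 8$. Vectors of rank $2$, the maximum since $A$ has rank $2$, need separate treatment.

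The main obstacle is the rank-$2$ tensors, exactly as in Nebe's $\Gamma_{72}$, where rank-$3$ tensors required computation. The approach I would try first is to write $x=a_1\otimes b_1+a_2\otimes b_2$ with $a_1,a_2$ a basis of $A$ adapted to $x$. The norm $h(x,x)$ is then a trace of a product of Gram matrices $G_A\cdot \overline{G_B(b_1,b_2)}$. Applying the Hermite-type inequality for positive Hermitian $2\times2$ matrices gives a lower bound of the form $2\sqrt{\det G_A\det G_B}$, which exceeds $6$ unless $b_1,b_2$ span a Hermitian sublattice of $B$ with unusually small determinant. The remaining cases form a finite list of short-vector pairs in $B$ of small norm. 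These would be enumerated by computer: short vectors of $B$ up to the relevant bound, orbits under $\Aut(B)$ of order divisible by $|\PSL_2(23)|$, and a check that no combination has norm $2$, $4$ or $6$. Equivalently, I would verify directly that the theta series of $L$ has vanishing coefficients at $q^1,q^2,q^3$, using an orbit decomposition under the large automorphism group $\Aut(A)\times\Aut(B)$ acting on $L$ to make the enumeration feasible. The same computation is repeated for $L_0$. Finally, non-isomorphism is established by an invariant: compare the orders of the automorphism groups, or the number of vectors of norm $10$ or higher-order theta/kissing configuration data. I would expect the sizes of stabilizers of minimal-vector configurations modulo $2$ to differ.
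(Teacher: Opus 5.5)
\textbf{The construction fails at the first step.} An $\mathcal O_F$-rank $2$ factor tensored with an $\mathcal O_F$-rank $22$ factor over $F=\Q(\sqrt{-23})$ can never have an even unimodular trace form, so none of your later steps apply. Here is why. The trace form on $L=A\otimes_{\mathcal O_F}B$ is unimodular if and only if $L^\#=\sqrt{-23}\,L$, and $L^\#=A^\#\otimes_{\mathcal O_F}B^\#$. Compare elementary divisors prime by prime: in adapted local bases, the exponents $a_i+b_j$ must be constant in $i,j$. This forces $A^\#=\mathfrak cA$ and $B^\#=\mathfrak c'B$ with $\mathfrak c\mathfrak c'=(\sqrt{-23})$. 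Moreover $A=(A^\#)^\#=\bar{\mathfrak c}^{-1}\mathfrak cA$ forces $\mathfrak c=\bar{\mathfrak c}$, so $\mathfrak c=r(\sqrt{-23})^k$ with $r\in\Q_{>0}$ and $k\in\{0,1\}$.

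Now rescale $h_A$ and $h_B$ by reciprocal rationals; this does not change $h_A\otimes h_B$. Then one factor is Hermitian self-dual, with trace determinant $23^{\mathrm{rank}}$. The other factor $X$ satisfies $X^\#=\sqrt{-23}\,X$, so $h(x,x)\in\Q\cap\mathfrak D_{F/\Q}^{-1}=\mathbb Z$ on $X$. Hence its trace lattice is even unimodular of rank $2\operatorname{rank}_{\mathcal O_F}X$, which forces $4\mid\operatorname{rank}_{\mathcal O_F}X$. Neither $2$ nor $22$ qualifies, since even unimodular lattices of rank $4$ or $44$ do not exist.

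Concretely, your $A$ (trace determinant a power of $23$) and your $23$-modular $B$ are both of self-dual type, so $A\otimes B$ would have trace determinant $23^{44}$, not $1$. Since $44=4\cdot 11$, the only nontrivial admissible split is a rank-$4$ factor with trace lattice $E_8$ tensored with a self-dual rank-$11$ factor. That is exactly the paper's choice: $E=\mathcal O_{F(\zeta_5)}$ with $h_4(u,v)=\Tr_{F(\zeta_5)/F}(\delta_4^{-1}u\bar v)$, tensored with the ideal lattice $J=(47,\zeta_{23}-21)\subset\Q(\zeta_{23})$, where $[\Q(\zeta_{23}):F]=11$.

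\textbf{What the correct split requires instead.} With ranks $4$ and $11$ the tensor rank goes up to $4$, and the hard part of the proof is not what you planned.
\begin{itemize}
\item Because $\Cl(\mathcal O_F)\cong C_3$, even a rank-one tensor in $L$ has the form $c\,x\otimes y$ with $c\in\mathfrak a\mathfrak b$, where the saturations $\mathfrak a x\subset E$ and $\mathfrak b y\subset J$ may be nonfree. So ``$\min A\cdot\min B$'' is not a valid bound. The paper instead uses Coulangeon's inequality $h(z,z)\ge s\bigl(d_s^{\mathrm{proj}}(E)\,d_s^{\mathrm{proj}}(J)\bigr)^{1/s}$, with discriminants of possibly nonfree sublattices.
\item It computes $d_s^{\mathrm{proj}}(J)=6,16,27,48$ for $s=1,\dots,4$, by enumerations guided by the projective Hermitian constants of $F$.
\item This gives $h(z,z)>3$ for tensor rank $s\le 3$, but only $h(z,z)\ge 3$ for $s=4$. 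Norm $3$ in tensor rank $4$ is then excluded by enumerating all candidate Gram matrices $H_Y$ with $\operatorname{tr}H_Y=69$ and testing isometric embeddings into $J$. This step is the real core of the argument and is missing from your plan.
\end{itemize}
Your fallback of checking the theta series directly in rank $88$ is not a feasible substitute; avoiding that computation is the whole point of the tensor-rank reduction.

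\textbf{The second lattice and non-isomorphism.} The paper takes $L_0=E\otimes J_0$ with $J_0=\bar{\mathfrak p}J$, which lies in the principal ideal class of $\Q(\zeta_{23})$ and has the same $d_s^{\mathrm{proj}}$. Non-isomorphism is proved by actually determining the full automorphism groups: $|\Aut_{\mathbb Z}(L,B)|=240\cdot 6072$ versus $|\Aut_{\mathbb Z}(L_0,B_0)|=240\cdot 12144$. This uses normality of $\PSL_2(23)$ via the generalized Fitting subgroup and Hiss--Malle. An expectation that some invariant will differ is not a proof.
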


The constructions in this paper are inspired by Nebe's construction of $\Gamma_{72}$ using a Hermitian tensor product; see Remark~\ref{rem:description} for details. We construct two rank $88$ lattices by taking Hermitian tensor products of the rank $4$ Hermitian lattice $E$ with two rank $11$ Hermitian lattices $J$ and $J_0$ over $\mathbb Q(\sqrt{-23})$. The trace lattice of $E$ is isomorphic to $E_8$, while $J$ and $J_0$ arise from ideals of $\mathbb Q(\zeta_{23})$. The resulting lattices $(L,B)$ and $(L_0,B_0)$ are both extremal and are not isomorphic as $\mathbb Z$-lattices.

An advantage of the Hermitian tensor product construction is that the computations required to estimate the minimum can be reduced to problems involving lattices of comparatively small rank (cf.~\cite{Coulangeon2000,CoulangeonNebe2013}). The key ingredient is Coulangeon's method for bounding tensor norms in tensor products of Hermitian lattices using minimal discriminants of sublattices~\cite{Coulangeon2000}.

Since $88=3\cdot24+16$, Venkov's theorem~\cite{Venkov1984} implies that the minimal vectors of any extremal lattice of rank $88$ form a spherical $3$-design. In particular, both $(L,B)$ and $(L_0,B_0)$ are strongly eutactic. Although perfection does not follow from Venkov's theorem, direct computations show that both $(L,B)$ and $(L_0,B_0)$ are perfect. Hence both lattices are eutactic and perfect, and therefore extreme. Consequently, the densities of the corresponding lattice sphere packings are locally maximal; see Proposition~\ref{prop:extreme88}. On the other hand, Proposition~\ref{prop:not-4-design} shows that the minimal vectors of any extremal even unimodular lattice of rank $88$ do not form a spherical $4$-design.

\subsection*{Acknowledgment}
The author would like to thank Miyu Suzuki and Hiroyoshi Tamori for kindly explaining to him various results and problems in lattice theory.
The author was supported by JSPS KAKENHI Grant
Number JP24K16886.

\subsection*{Computational reproducibility}
 The exact inputs, source code, verification programs, and certificates for the computer-assisted calculations are available at \begin{center}
     \url{https://github.com/RyotaroSakamoto0113/rank-88-extremal-lattice-computations}. 
 \end{center}

\subsection*{Declaration on the Use of Artificial Intelligence}
ChatGPT was used in all computer-assisted calculations reported in this work. It was also used to create the code, computational data, certificates, and documentation released in the accompanying GitHub repository, and to assist in structuring this manuscript.

\subsection*{Notation}
We fix an embedding $\overline{\mathbb Q} \subset \mathbb{C}$ and, for each $n\ge1$, set $\zeta_n:=\exp(2\pi \sqrt{-1}/n)$.
For a number field $K \subset\overline{\mathbb Q}$, we write $\mathcal O_K$ for its ring of integers.
For a finite extension $L/K$, we write its trace and norm as
\[
\operatorname{Tr}_{L/K}\colon L\to K,
\qquad
\operatorname{N}_{L/K} \colon L^\times\to K^\times,
\]
respectively.
We set $\operatorname{N}_{L/K}(0) := 0$.
We also write the inverse different as $\mathfrak D_{L/K}^{-1}
:= \{x\in L \mid \operatorname{Tr}_{L/K}(x\mathcal O_L)\subset\mathcal O_K\}$.
If $K$ is a CM field, we write $K^+$ for its maximal totally real subfield.
We write $C_n$ for the cyclic group of order $n$.

\section{Hermitian lattices}\label{sec:generalities}

In this section, let $K$ be an imaginary quadratic field, and write $x\mapsto\bar x$ for its nontrivial automorphism.

\begin{definition}
A Hermitian form on an $\mathcal O_K$-module $L$ is a map
\[
h\colon L \times L \to  K
\]
that is $\mathcal O_K$-linear in the first variable and satisfies $h(y,x)=\overline{h(x,y)}$. It is called positive definite if $h(x,x)>0$ for every $x\ne0$.
A pair consisting of a finitely generated projective $\mathcal O_K$-module $L$ and a positive definite Hermitian form $h$ is called a Hermitian lattice.
Via the natural embedding, we always regard $L$ as a submodule of $L \otimes_{\mathcal{O}_K} K$, and define
\[
L^\# :=\{x\in L \otimes_{\mathcal{O}_K} K \mid h(x,L)\subset\mathcal O_K\}
\]
to be the Hermitian dual. We say that $(L,h)$ is self-dual if $h(L,L)\subset\mathcal O_K$ and $L^\#=L$.
\end{definition}

A Hermitian lattice $(L,h)$ of rank $r$ over $\mathcal O_K$ can be written, for ideals $\mathfrak a_1,\ldots,\mathfrak a_r$ of $\mathcal O_K$, as
\[
L=\mathfrak a_1e_1\oplus\cdots\oplus\mathfrak a_re_r.
\]
In this situation, the pairs $(\mathfrak a_i,e_i)$ form a pseudo-basis of $L$, and the ideal class $[\mathfrak a_1\cdots\mathfrak a_r]$ is called the Steinitz class of $L$.
Moreover, we define the discriminant $d_L$ of  $L$ by
\[
d_L :=
\left(\prod_{i=1}^r \operatorname{N}_{K/\mathbb Q}(\mathfrak a_i)\right)
\det\bigl(h(e_i,e_j)\bigr)_{i,j}.
\]
For an $\mathcal O_K$-submodule $P\subset L$, write its saturation as
$P^{\mathrm{sat}} := (K \otimes_{\mathcal{O}_K} P)\cap L$.

\begin{definition}
For a Hermitian lattice $(L,h)$ of rank $r$, define its projective minimum by
\[
\mu^p(L) := \min\{d_P \mid 0\ne P\subset L,\ \operatorname{rank}_{\mathcal O_K}P=1\}.
\]
For a fixed Steinitz type $c$, define the projective Hermitian constant by
\[
\gamma_{r,K}^{(c)} := \sup_{\text{type}(L)=c}\frac{\mu^p(L)}{d_L^{1/r}},
\]
where $L$ ranges over all Hermitian lattices of rank $r$ whose Steinitz class  is $c$.
For simplicity, we also put $\gamma_{r,K} := \gamma_{r,K}^{([\mathcal{O}_K])}$.
\end{definition}

Braun--Coulangeon give a method for computing the projective Hermitian  constants of $K$ in~\cite{BraunCoulangeon2015};
for $K = \mathbb{Q}(\sqrt{-23})$, the following values are known.

\begin{proposition}\label{prop:hermitian_constant}
If $K = \mathbb{Q}(\sqrt{-23})$,
then $\gamma_{2, \mathbb{Q}(\sqrt{-23})}^{(c)} = \sqrt{23/5}$ for every Steinitz type $c$.
Moreover, $\gamma_{3,  \mathbb{Q}(\sqrt{-23})} = \sqrt[3]{30613/1029}$,
and if $c$ is nontrivial, then $\gamma_{3,   \mathbb{Q}(\sqrt{-23})}^{(c)} = \sqrt[3]{46}$.
\end{proposition}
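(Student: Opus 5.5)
This proposition is essentially a citation of computations in the literature, so the proof consists of invoking the algorithm of Braun--Coulangeon~\cite{BraunCoulangeon2015} for $K=\mathbb{Q}(\sqrt{-23})$ and checking that its output gives these values. I would recall the relevant input first. The class group of $\mathcal O_K$ is cyclic of order $3$, generated by the prime $\mathfrak p=(2,(1+\sqrt{-23})/2)$ of norm $2$. The classes $[\mathfrak p]$ and $[\mathfrak p^{-1}]=[\bar{\mathfrak p}]$ are exchanged by complex conjugation. Replacing $L$ by its conjugate lattice preserves $\mu^p$ and $d_L$. Therefore for each rank $r$ only two Steinitz types need treatment: trivial and nontrivial.

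The method is a Voronoi-type theory for the projective minimum. For fixed rank $r$ and Steinitz class $c$, the function $L\mapsto \mu^p(L)/d_L^{1/r}$ attains its supremum at a ``perfect'' form. Such a form is determined by its set of minimal rank-one projective sublattices $P=\mathfrak a x$, where $d_P=\mathrm N(\mathfrak a)h(x,x)$. Perfect forms are finite in number up to $\GL(L)$-equivalence and can be enumerated by Voronoi's neighbour algorithm in the cone of positive Hermitian forms. The plan is as follows.

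\begin{enumerate}[label=(\roman*)]
\item For each pair $(r,c)$ with $r\in\{2,3\}$, enumerate the perfect Hermitian forms of that Steinitz type, starting from an explicit perfect form and walking the neighbour graph until it closes up modulo the unit/automorphism group.
\item Compute $\mu^p/d^{1/r}$ for each perfect form and take the maximum.
\item Read off the values $\sqrt{23/5}$ in rank $2$ for both types, $\sqrt[3]{30613/1029}$ in rank $3$ with trivial type, and $\sqrt[3]{46}$ in rank $3$ with nontrivial type.
\end{enumerate}

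The rank-$2$ values can be cross-checked by hand, since binary Hermitian forms correspond to points of hyperbolic $3$-space modulo the Bianchi group. Step (iii) is then just a matter of quoting the tables in \cite{BraunCoulangeon2015}, and where needed their follow-up computations for $d=-23$, for the exact constants.

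The main obstacle is the rank-$3$ enumeration. It requires enumerating minimal projective sublattices, which in turn requires running over all ideal classes $\mathfrak a$ and short vectors in $\mathfrak a^{-1}L$. It also requires certifying that the neighbour graph is complete, which depends on a correct equivalence test. In the paper I would not redo this. Instead I would state that the values are taken from the published computations, and possibly confirm the extremal forms independently. Concretely, I would exhibit explicit lattices attaining each claimed ratio, giving a lower bound, and rely on the Voronoi enumeration only for the matching upper bound.
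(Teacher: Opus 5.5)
Your proposal is correct and takes the same route as the paper, which proves this proposition purely by citation. The rank-$2$ value is taken from \cite[\S6.1, Table~1]{BraunCoulangeon2015} (also \cite[Appendix~A.6]{Braun2012}), and the rank-$3$ values from Braun's thesis \cite[Appendix~B.2]{Braun2012}, which is the specific source behind your ``follow-up computations''. Your conjugation argument reducing to two Steinitz types and your sketch of the Voronoi-type enumeration are sound context, but the paper neither redoes nor re-verifies any of it.
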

\begin{proof}
For $r=2$, this is due to Braun--Coulangeon~\cite[\S6.1, Table~1]{BraunCoulangeon2015}, and is also recorded by Braun~\cite[Appendix~A.6]{Braun2012}.
For $r=3$, the values were computed by Braun in~\cite[Appendix~B.2]{Braun2012}.
\end{proof}

\subsection{A useful inequality for tensor rank}

Let $(L,h)$ be a Hermitian lattice over $\mathcal O_K$.

\begin{definition}[{cf.~\cite[p. 120]{Coulangeon2000} and~\cite[p. 50]{CoulangeonNebe2013}}]
For $1\le s\le\operatorname{rank}_{\mathcal O_K}L$, we  define
\[
d_s^{\mathrm{proj}}(L)
:=\min\{d_M\mid M\subset L,\ \operatorname{rank}_{\mathcal O_K}M=s\}.
\]
Note that, unlike in the settings of \cite[p.~120]{Coulangeon2000} and \cite[p.~50]{CoulangeonNebe2013}, $M$ is allowed to be a nonfree $\mathcal{O}_K$-sublattice.
\end{definition}

\begin{definition}
For any $K$-vector spaces $V$, $W$ and $0\ne z\in V\otimes_KW$,
the tensor rank of $z$ is the least integer $s$ such that $z=\sum_{i=1}^s x_i\otimes y_i$.
\end{definition}

\begin{proposition}[{cf. \cite[Proposition 3.2]{Coulangeon2000}}]
\label{prop:inequality-tensor}
Let $(L,h) = (L_1,h_1)\otimes_{\mathcal O_K}(L_2,h_2)$.
If $0\ne z\in L$ has tensor rank $s$, then
\[
h(z,z)\ge s\left(d_s^{\mathrm{proj}}(L_1)d_s^{\mathrm{proj}}(L_2)\right)^{1/s}.
\]
\end{proposition}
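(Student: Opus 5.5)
Write $z=\sum_{i=1}^s x_i\otimes y_i$ with $s$ equal to the tensor rank of $z$. Minimality of $s$ forces $x_1,\dots,x_s$ to be $K$-linearly independent in $V_1=L_1\otimes K$, and likewise $y_1,\dots,y_s$ in $V_2=L_2\otimes K$. Put $X=\Span_K(x_i)$ and $Y=\Span_K(y_i)$, so that $z\in X\otimes_K Y$. The plan is to reduce everything to the Gram matrices of two rank-$s$ sublattices, $M_1=X\cap L_1\subset L_1$ and $M_2=Y\cap L_2\subset L_2$, and then apply an AM--GM/determinant inequality as in Coulangeon's original argument.

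\textbf{Step 1: $z$ lies in $M_1\otimes_{\mathcal O_K}M_2$.} Both $M_1$ and $M_2$ are saturated (pure) sublattices of rank $s$. Since $\mathcal O_K$ is Dedekind, they are direct summands, so $M_1\otimes M_2$ is a direct summand of $L_1\otimes L_2$. Hence
\[
(M_1\otimes M_2)=(X\otimes Y)\cap(L_1\otimes L_2)\ni z .
\]
The lattices $M_1,M_2$ need not be free, which is exactly why $d_s^{\mathrm{proj}}$ is defined allowing non-free sublattices. It follows that $d_{M_1}\ge d_s^{\mathrm{proj}}(L_1)$ and $d_{M_2}\ge d_s^{\mathrm{proj}}(L_2)$.

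\textbf{Step 2: reduce to a trace inequality.} Choose pseudo-bases $M_1=\bigoplus\mathfrak a_ie_i$ and $M_2=\bigoplus\mathfrak b_jf_j$, and write $z=\sum_{i,j}c_{ij}\,e_i\otimes f_j$ with $c_{ij}\in\mathfrak a_i\mathfrak b_j$. Let $G_1=(h_1(e_i,e_k))$, $G_2=(h_2(f_j,f_l))$ and $C=(c_{ij})$. Then
\[
h(z,z)=\Tr\bigl(G_1 C \overline{G_2}\, C^*\bigr)\quad(\text{up to transposition conventions}).
\]
The matrix $C$ is invertible because the tensor rank of $z$ is exactly $s$. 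By AM--GM applied to the eigenvalues of the positive definite Hermitian matrix $G_1^{1/2}C\overline{G_2}C^*G_1^{1/2}$,
\[
h(z,z)\ge s\,\bigl(\det G_1\det G_2\,|\det C|^2\bigr)^{1/s}.
\]

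\textbf{Step 3: the integrality estimate (main obstacle).} It remains to show that
\[
|\det C|^2\cdot\det G_1\det G_2\ge d_{M_1}d_{M_2}=\prod_i\Nm(\mathfrak a_i)\prod_j\Nm(\mathfrak b_j)\,\det G_1\det G_2 ,
\]
that is, $|\det C|^2\ge\Nm(\mathfrak a_1\cdots\mathfrak a_s)\,\Nm(\mathfrak b_1\cdots\mathfrak b_s)$.

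Since $c_{ij}\in\mathfrak a_i\mathfrak b_j$, expanding the determinant shows $\det C\in\mathfrak a_1\cdots\mathfrak a_s\,\mathfrak b_1\cdots\mathfrak b_s$. Because $\det C\neq0$, we get
\[
|\det C|^2=\Nm_{K/\Q}(\det C)\ge\Nm(\mathfrak a\mathfrak b),
\]
with $\mathfrak a=\prod\mathfrak a_i$ and $\mathfrak b=\prod\mathfrak b_j$. The care needed here is in tracking that the norm of a nonzero element of an integral-or-fractional ideal $I$ is at least $\Nm(I)$; this holds since $(\det C)\subseteq I$, so $(\det C)I^{-1}$ is integral.

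Combining the three steps gives
\[
h(z,z)\ge s\,(d_{M_1}d_{M_2})^{1/s}\ge s\,\bigl(d_s^{\mathrm{proj}}(L_1)\,d_s^{\mathrm{proj}}(L_2)\bigr)^{1/s}.
\]
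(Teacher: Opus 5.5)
Your proof is correct and follows essentially the same route as the paper. Your $X,Y$ (spans of a minimal decomposition) are the paper's $U_1,U_2$ (images of the contraction maps). You then use the direct-summand property of $M_i=L_i\cap U_i$ to place $z$ in $M_1\otimes_{\mathcal O_K}M_2$, obtain $|\det C|^2\ge\Nm(\mathfrak a)\Nm(\mathfrak b)$ from $\det C\in\mathfrak a\mathfrak b$, and finish with AM--GM on eigenvalues, exactly as the paper does. The only cosmetic difference is that you use $G_1^{1/2}$ where the paper factors $H_i=P_i^{\mathsf T}\overline{P_i}$.
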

\begin{proof}
For \(i\in\{1,2\}\), let \(V_i:=L_i\otimes_{\mathcal O_K}K\).
Define \(K\)-subspaces \(U_i\subset V_i\), for \(i=1,2\), by
\begin{align*}
  U_1 &:=\operatorname{Im}\left(
\operatorname{Hom}_K(V_2,K)\to V_1;
\ \varphi\mapsto(\operatorname{id}_{V_1}\otimes\varphi)(z)
\right),
\\
U_2 &:=\operatorname{Im}\left(
\operatorname{Hom}_K(V_1,K)\to V_2;
\ \varphi\mapsto(\varphi\otimes\operatorname{id}_{V_2})(z)
\right).
\end{align*}
Put \(M_i:=L_i\cap U_i\) for \(i=1,2\).
Since \(z\) has tensor rank \(s\), we have \(\dim_K U_i=s\), and $z\in U_1\otimes_K U_2$.
Since \(L_i/M_i\) is torsion-free, hence projective over \(\mathcal O_K\), the inclusions \(M_i\hookrightarrow L_i\) split. In particular,
\[
z\in M_1\otimes_{\mathcal O_K}M_2.
\]
Write
\begin{align*}
M_1&=\bigoplus_{i=1}^s\mathfrak a_i e_i^{(1)},
&
M_2&=\bigoplus_{j=1}^s\mathfrak b_j e_j^{(2)},
&
z&=\sum_{i,j=1}^s c_{ij}e_i^{(1)}\otimes e_j^{(2)},
\qquad c_{ij}\in\mathfrak a_i\mathfrak b_j,\\
C&:=(c_{ij}),
&
\mathfrak a&:=\prod_{i=1}^s\mathfrak a_i,
&
\mathfrak b&:=\prod_{j=1}^s\mathfrak b_j.
\end{align*}
Since \(z\) has tensor rank \(s\), we have $0\ne\det C\in\mathfrak a\mathfrak b$, and therefore
\[
|\det C|^2
\ge
\operatorname{N}_{K/\mathbb Q}(\mathfrak a)\operatorname{N}_{K/\mathbb Q}(\mathfrak b).
\]
For \(i=1,2\), let $H_i$  be the Gram matrix of  $U_i$ with respect to the \(K\)-basis \(e_1^{(i)},\ldots,e_s^{(i)}\).
Since \(H_i\) is a positive-definite Hermitian matrix, there exists \(P_i\in\operatorname{GL}_s(\mathbb C)\) such that $H_i=P_i^{\mathsf T} \, \overline{P_i}$.
Put $Z:=P_1CP_2^{\mathsf T}$.
Then \(h(z,z)=\operatorname{tr}(Z \overline{Z}^{\mathsf T})\). Since \(Z\overline{Z}^{\mathsf T}\) is positive definite, the arithmetic--geometric mean inequality applied to its eigenvalues gives
\[
h(z,z) \ge s\det(Z\overline{Z}^{\mathsf T})^{1/s} =s\left(|\det C|^2\det H_1\det H_2\right)^{1/s}.
\]
Moreover,
\[
|\det C|^2\det H_1\det H_2
\ge
\operatorname{N}_{K/\mathbb Q}(\mathfrak a)
\operatorname{N}_{K/\mathbb Q}(\mathfrak b)
\det H_1\det H_2
=
d_{M_1}d_{M_2} \geq d_s^{\mathrm{proj}}(L_1) d_s^{\mathrm{proj}}(L_2).
\]
This completes the proof.
\end{proof}

\section{\texorpdfstring{Definition of the rank $44$ Hermitian lattice}{Definition of the rank 44 Hermitian lattice}}
\label{sec:tensor-decomposition}

Throughout, let $F := \mathbb{Q}(\sqrt{-23})$, and put $\displaystyle \alpha := \frac{1+\sqrt{-23}}{2}$. Then $\mathcal O_F=\mathbb Z[\alpha]$.

\subsection{\texorpdfstring{The rank $4$ Hermitian lattice $E$}{The rank 4 Hermitian lattice E}}

Fix a totally positive generator $\delta_4 \in \mathcal{O}_{F(\zeta_5)^+}$ of $\mathfrak D_{F(\zeta_5)/\Q}$, i.e.,
$\delta_4\mathcal{O}_{F(\zeta_5)} = \mathfrak D_{F(\zeta_5)/\Q}$.
For example, one may take
\begin{align}\label{eq:delta4}
\delta_4 =
\frac{
11\sqrt{-23}(\zeta_5-\zeta_5^{-1})(\zeta_5+\zeta_5^{-1})
-2\sqrt{-23}(\zeta_5-\zeta_5^{-1})
-115(\zeta_5+\zeta_5^{-1})+115}{2}.
\end{align}

\begin{definition}\label{def:rank4lattice}
Let $E:=\mathcal O_{F(\zeta_5)}$.
Define a Hermitian form $h_4\colon E\times E\to F$ by
\[
h_4(u,v) := \operatorname{Tr}_{F(\zeta_5)/F}(\delta_4^{-1}u\bar v).
\]
\end{definition}

\begin{remark}
Lattices of this kind, obtained by equipping an ideal of a number field with a trace form, have been widely studied as ideal lattices. For related earlier work, see~\cite{BayerFluckiger1984,BachocBatut1992,BayerFluckigerMartinet1994,BatutQuebbemannScharlau1995,BayerFluckiger1999,BayerFluckiger2000,BayerFluckiger2002Determinants,BayerFluckigerSuarez2005,BayerFluckigerSuarez2006, Nebe2016}.
\end{remark}

\begin{remark}\label{rem:E-Gram}
We take $\delta_4$ as in~\eqref{eq:delta4}.
The $\mathcal O_F$-module $E=\mathcal O_{F(\zeta_5)}$ is free, with an $\mathcal O_F$-basis given by
\[
\begin{aligned}
p_1&:=(1-\alpha)+\alpha\zeta_5-2\zeta_5^3,
&\qquad
p_2&:=-2-2\zeta_5-3\zeta_5^3,\\
p_3&:=1-3\zeta_5-(1+\alpha)\zeta_5^3,
&\qquad
p_4&:=-\alpha-\zeta_5^2-(1+\alpha)\zeta_5^3.
\end{aligned}
\]
With respect to this basis, the Gram matrix of $h_4$ is given by
\[
\begin{pmatrix}
1&0&\dfrac{4}{\sqrt{-23}}&
\dfrac{\alpha}{\sqrt{-23}}\\
0&1&\dfrac{\alpha-1}{\sqrt{-23}}&
\dfrac{4}{\sqrt{-23}}\\
-\dfrac{4}{\sqrt{-23}}&
\dfrac{\alpha}{\sqrt{-23}}&1&0\\
\dfrac{\alpha-1}{\sqrt{-23}}&
-\dfrac{4}{\sqrt{-23}}&0&1
\end{pmatrix}.
\]
\end{remark}

\begin{proposition}\label{prop:rank4-algebraic-properties}
The following statements hold.
\begin{enumerate}[label=(\roman*)]
\item $(E,h_4)$ is a Hermitian lattice of rank $4$ over $\mathcal O_F$.
\item
$\displaystyle h_4(E,E)\subset\mathfrak D_{F/\Q}^{-1}
=\frac1{\sqrt{-23}}\mathcal O_F$. In particular, $h_4(z, z) \in \Q_{>0} \cap\mathfrak D_{F/\Q}^{-1}= \Z_{\ge 1}$ for any $0 \neq z \in E$.
\item $E^\#=\sqrt{-23}\,E$.
\item The trace form $B_4(x,y):=\operatorname{Tr}_{F/\Q}h_4(x,y)$ is a positive definite even unimodular integral form of rank $8$. Hence $(E,B_4)\cong E_8$.
\end{enumerate}
\end{proposition}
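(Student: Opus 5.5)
The plan is to reduce everything to standard facts about the different of $F(\zeta_5)$, using transitivity of traces and different ideals. Put $M:=F(\zeta_5)$, a CM field of degree $8$ over $\Q$ with maximal real subfield $M^+=F(\zeta_5)^+$; this is the field $\Q(\sqrt{-23},\sqrt5)$-type compositum extended by $\zeta_5$, of degree $4$ over $F$. Since $\delta_4\in M^+$ is totally positive, $h_4(u,v)=\Tr_{M/F}(\delta_4^{-1}u\bar v)$ is $\mathcal O_F$-linear in $u$ and satisfies $h_4(v,u)=\overline{h_4(u,v)}$, because complex conjugation on $M$ commutes with $\Tr_{M/F}$ and fixes $\delta_4$. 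For $u\ne0$, $h_4(u,u)=\Tr_{M/F}(\delta_4^{-1}u\bar u)$ is a sum of the values of a totally positive element of $M^+$ under the embeddings lying over a fixed embedding of $F$, so it is positive. Since $E=\mathcal O_M$ is finitely generated and torsion-free over the Dedekind ring $\mathcal O_F$, it is projective of rank $4$ (and in fact free, by Remark~\ref{rem:E-Gram}). This gives (i).

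For (ii), the key is transitivity of the different, $\mathfrak D_{M/\Q}=\mathfrak D_{M/F}\,\mathfrak D_{F/\Q}\mathcal O_M$. For $u,v\in\mathcal O_M$, the element $\delta_4^{-1}u\bar v$ lies in $\mathfrak D_{M/\Q}^{-1}$. Its $\Tr_{M/F}$ therefore lies in $\mathfrak D_{F/\Q}^{-1}$, since $\Tr_{F/\Q}\bigl(\Tr_{M/F}(x)\mathcal O_F\bigr)=\Tr_{M/\Q}(x\mathcal O_F)\subset\Z$. Here $\mathfrak D_{F/\Q}^{-1}=\frac1{\sqrt{-23}}\mathcal O_F$ because $\operatorname{disc}F=-23$. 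If $q\in\Q\cap\frac1{\sqrt{-23}}\mathcal O_F$, then $q\sqrt{-23}\in\mathcal O_F$, so $-23q^2\in\Z$, which forces $q\in\Z$; combined with positivity this gives $\Z_{\ge1}$.

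For (iv) and (iii), I would compute $B_4(x,y)=\Tr_{F/\Q}h_4(x,y)=\Tr_{M/\Q}(\delta_4^{-1}x\bar y)$. This is symmetric, and since $\bar{\mathcal O}_M=\mathcal O_M$, its $\Z$-dual is $\{y:\delta_4^{-1}\bar y\in\mathfrak D_{M/\Q}^{-1}\}=\overline{\mathcal O_M}=\mathcal O_M$, so $B_4$ is unimodular. It is positive definite by (i) and has rank $8$. Evenness follows from $B_4(x,x)=2h_4(x,x)$ together with $h_4(x,x)\in\Z$ from (ii). Hence $(E,B_4)$ is even unimodular of rank $8$, so it is isomorphic to $E_8$ by the classical uniqueness. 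For (iii), the standard relation between Hermitian and trace duals gives $E^\#=\mathfrak D_{F/\Q}\cdot E^{*,B_4}$, where $E^{*,B_4}$ denotes the $B_4$-dual: indeed $h_4(x,E)\subset\mathcal O_F$ if and only if $\Tr_{F/\Q}(a\,h_4(x,E))\subset\Z$ for all $a\in\mathfrak D_{F/\Q}^{-1}$. This yields $E^\#=\sqrt{-23}\,E$.

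The only genuinely nonformal input is the existence of a totally positive generator $\delta_4\in M^+$ of $\mathfrak D_{M/\Q}$. For the specific $\delta_4$ in~\eqref{eq:delta4}, I expect this to be the main obstacle, and I would check it by direct computation. One verifies that $\delta_4$ is fixed by complex conjugation, that all four real embeddings of $\delta_4$ in $M^+$ are positive, and that $\Nm_{M/\Q}(\delta_4)=|\operatorname{disc}M|=23^4\cdot5^6$, together with $\delta_4\in\mathfrak D_{M/\Q}$. These last two conditions together force $\delta_4\mathcal O_M=\mathfrak D_{M/\Q}$. Alternatively, the Gram matrix of Remark~\ref{rem:E-Gram} would give (ii)--(iv) directly: its entries lie in $\frac1{\sqrt{-23}}\mathcal O_F$, and the determinant of its trace form is $1$.
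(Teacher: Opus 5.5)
Your proof is correct and is essentially the paper's trace-duality argument; the only real difference is the order in which you get (iii) and (iv). You obtain unimodularity directly from absolute duality (the $B_4$-dual of $\mathcal O_{F(\zeta_5)}$ is $\overline{\delta_4\mathfrak D_{F(\zeta_5)/\Q}^{-1}}=\mathcal O_{F(\zeta_5)}$) and then deduce $E^\#=\mathfrak D_{F/\Q}E^{*}=\sqrt{-23}\,E$, whereas the paper first computes $E^\#=\delta_4\mathfrak D_{F(\zeta_5)/F}^{-1}$ by relative duality and derives unimodularity from that. Your ordering never actually uses the transitivity of the different that you call the key step in (ii), since only transitivity of the trace is used there; the paper's identification $\delta_4\mathfrak D_{F(\zeta_5)/F}^{-1}=\sqrt{-23}\,E$ does rely on it implicitly.
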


\begin{proof}
The Hermitian property follows immediately from the definition.  For $0\ne z\in E$, we have
\[
h_4(z,z)
=\sum_{\substack{\sigma \colon F(\zeta_5)\hookrightarrow\mathbb C\\ \sigma|_{F}=\mathrm{id}}}
\sigma(\delta_4)^{-1}|\sigma(z)|^2,
\]
and since $\delta_4$ is totally positive, $h_4$ is positive definite. This proves (i).
Next, since
$\delta_4^{-1}\mathcal O_{F(\zeta_5)}=\mathfrak D_{F(\zeta_5)/\Q}^{-1}$, for any $u,v\in E$ and $b\in\mathcal O_F$ we have
\[
\operatorname{Tr}_{F/\Q}\!\left(
 b\,\operatorname{Tr}_{F(\zeta_5)/F}(\delta_4^{-1}u\bar v)
\right)
=
\operatorname{Tr}_{F(\zeta_5)/\Q}(b\delta_4^{-1}u\bar v)\in\Z.
\]
Therefore $\operatorname{Tr}_{F(\zeta_5)/F}(\delta_4^{-1}u\bar v)
\in \mathfrak D_{F/\Q}^{-1}$, proving (ii).
By duality for the relative trace pairing, we also have $E^\#$
$=\delta_4\,\mathfrak D_{F(\zeta_5)/F}^{-1}=\sqrt{-23}\,E$, which proves (iii).
Finally, we prove (iv). Since $h_4(z,z)\in \Z_{\geq 0}$, we have  $B_4(z,z)=2h_4(z,z)\in2\mathbb Z$.
Moreover, since $\{z\mid B_4(z,E)\subset\Z\}=\{z\mid h_4(z,E)\subset\mathfrak D_{F/\Q}^{-1}\}=\{z\mid\sqrt{-23}\,z\in E^\#\}=E$, the lattice $(E,B_4)$ is unimodular.
It is also positive definite by (i), and the uniqueness of the positive definite even unimodular lattice of rank $8$ therefore gives $(E,B_4)\cong E_8$.
\end{proof}

\begin{remark}
The group $\operatorname{N}_{F(\zeta_5)/F(\zeta_5)^+}(\mathcal O_{F(\zeta_5)}^\times)$ coincides with the group of totally positive units of $\mathcal O_{F(\zeta_5)^+}$. Hence the isometry class of the Hermitian lattice $(E,h_4)$ is independent of the choice of the totally positive generator $\delta_4 \in \mathcal O_{F(\zeta_5)^+}$ of $\mathfrak D_{F(\zeta_5)/\Q}$.
\end{remark}

\begin{proposition}\label{prop:d_s(E)}
For the Hermitian lattice $(E,h_4)$, we have
\[
\left(d_1^{\mathrm{proj}}(E), d_2^{\mathrm{proj}}(E), d_3^{\mathrm{proj}}(E), d_4^{\mathrm{proj}}(E)\right) = \left(1,\frac5{23},\frac1{23},\frac1{529}\right).
\]
\end{proposition}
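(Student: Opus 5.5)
Each value $d_s^{\mathrm{proj}}(E)$ is a minimum of discriminants of rank $s$ sublattices, so for each $s$ we need a lower bound valid for all sublattices and an explicit sublattice attaining it. The extreme cases $s=4$ and $s=1$ come straight from Proposition~\ref{prop:rank4-algebraic-properties}. For $s=2,3$ we add duality, Proposition~\ref{prop:hermitian_constant}, and a finite computation.

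\emph{Case $s=4$.} A rank $4$ sublattice $M\subset E$ has finite index, and $d_M=[E:M]^2_{\mathbb Z}\cdot d_E$ in the appropriate normalization, namely $d_M=\Nm_{F/\Q}([E:M]_{\mathcal O_F})\,d_E$. So the minimum is $d_E$ itself. Since $E^\#=\sqrt{-23}E$ by Proposition~\ref{prop:rank4-algebraic-properties}(iii), we get $[E^\#:E]$ of $\mathbb Z$-order $23^4$. Using $d_E^2=|E^\#/E|^{-1}$ (up to the convention $d_E=\det$ of the Gram matrix), this gives $d_E=23^{-2}=1/529$. As a check, the determinant of the Gram matrix in Remark~\ref{rem:E-Gram} should equal $1/529$.

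\emph{Case $s=1$.} A rank $1$ sublattice is $\mathfrak a z$ with $d=\Nm(\mathfrak a)h_4(z,z)$. We may take $\mathfrak a$ integral and $z\in\mathfrak a^{-1}E$, so that $\mathfrak a z\subset E$.
\begin{itemize}
\item \emph{Lower bound.} For every $x\in\mathfrak a z$ we have $h_4(x,x)\ge1$ by part (ii), since $x\in E$. Now $\mathfrak a\bar{\mathfrak a}=\Nm(\mathfrak a)\mathcal O_F$, so $\Nm(\mathfrak a)h_4(z,z)$ is a positive rational lying in $h_4(\mathfrak a z,\mathfrak a z)\mathcal O_F$, which is contained in $\mathfrak D_{F/\Q}^{-1}$. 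It is therefore an integer $\ge1$.
\item \emph{Attained.} The basis vector $p_1$ has $h_4(p_1,p_1)=1$.
\end{itemize}
Hence $d_1^{\mathrm{proj}}(E)=1$.

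\emph{Case $s=3$.} Use duality: for a saturated rank $s$ sublattice $M\subset E$ (saturating only decreases $d_M$), set $M^\perp:=M^{\perp}\cap E^\#$. Then the standard relation is $d_M\,d_E^{-1}=d_{(E^\#\cap M^\perp)}$ up to scaling; more concretely, $d_M=d_E\cdot d(\pi(E^\#))^{-1}$-type identities hold.
\begin{itemize}
\item Concretely, $d_M = d_E\cdot d_{E^\#\cap M^\perp}$. Since $E^\#=\sqrt{-23}E$ is $E$ scaled by $23$, this gives $d_M=d_E\cdot 23^{4-s}\,d_{N}$, where $N:=E\cap M^\perp$ is saturated of rank $4-s$.
\item For $s=3$: $d_M=\frac{1}{529}\cdot 23\cdot d_N\ge\frac1{23}$, using $d_N\ge1$ from the $s=1$ case.
\item Equality is attained with $N=\mathcal O_Fp_1$.
\end{itemize}
Hence $d_3^{\mathrm{proj}}(E)=1/23$.

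\emph{Case $s=2$.} The same duality gives $d_M=d_N$ with $N$ of rank $2$, so $d_2$ is self-dual and the identity gives no bound. A direct argument is needed.
\begin{itemize}
\item \emph{Hermite-type bound.} Every rank $2$ saturated $M$ contains a rank $1$ sublattice of discriminant $\ge1$. Proposition~\ref{prop:hermitian_constant} gives $\mu^p(M)\le\sqrt{23/5}\,d_M^{1/2}$, hence $d_M\ge 5/23$.
\item \emph{Attained.} We need a rank $2$ sublattice with $d_M=5/23$. Try $M=\mathcal O_Fp_1+\mathcal O_Fp_3$. Its Gram determinant is $1-16/23=7/23$, which is too big. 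Try instead $\mathcal O_Fp_1+\mathcal O_Fp_4$: its determinant is $1-|\alpha|^2/23=1-6/23=17/23$, again too big. So a nonfree or cleverer sublattice is likely required. I would search for a pair $x,y\in E$ with $h_4(x,x)=h_4(y,y)=1$ and $|h_4(x,y)|^2=18/23$; since $h_4(x,y)\in\frac{1}{\sqrt{-23}}\mathcal O_F$, this needs an element of norm $18$, e.g.\ $\Nm(\beta)=18$ is possible as $\beta=\alpha+\cdots$. Failing that, search over nonfree $\mathfrak p$-pseudo-bases, with $\mathfrak p$ a prime over $2$, by computer enumeration of short vectors of the $E_8$ trace lattice.
\end{itemize}
Finding this explicit minimizer is the main obstacle.
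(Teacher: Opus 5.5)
\textbf{The gap: the upper bound for $s=2$.} You prove only $d_2^{\mathrm{proj}}(E)\ge 5/23$, using $\gamma^{(c)}_{2,F}=\sqrt{23/5}$ and $\mu^p\ge1$ exactly as the paper does. Your trial sublattices give only $d_2^{\mathrm{proj}}(E)\le 7/23$. A Hermite-constant bound does not tell you whether the extremal configuration is realized inside $E$. So the equality $d_2^{\mathrm{proj}}(E)=5/23$ is not established until you produce a witness.

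Your reduction is the right one. Since $h_4(x,y)\in\frac{1}{\sqrt{-23}}\mathcal O_F$, you need $u,v\in E$ with $h_4(u,u)=h_4(v,v)=1$ and $\Nm_{F/\Q}(\sqrt{-23}\,h_4(u,v))=18$, and $\beta=\alpha-4$ has norm $16-4+6=18$. What is missing is showing that such a pair actually exists in $E$.

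The paper supplies it explicitly:
$u=-\zeta_5-\zeta_5^2+\alpha(1+\zeta_5+\zeta_5^3)$ and $v=2-\zeta_5^2+\zeta_5^3+\alpha\zeta_5$.
These satisfy $h_4(u,u)=h_4(v,v)=1$ and $h_4(u,v)=(23+7\sqrt{-23})/46$, i.e.\ $\sqrt{-23}\,h_4(u,v)=\alpha-4$. Hence the free sublattice $\mathcal O_Fu\oplus\mathcal O_Fv$ has discriminant $1-18/23=5/23$, and no search over non-free pseudo-bases is needed. Only the lower bound is used later, in Proposition~\ref{prop:tensor-rank-3}, but the statement you are proving asserts equality.

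\textbf{The other cases are fine and largely parallel the paper.}
\begin{itemize}
\item For $s=1$, your argument $\Nm_{F/\Q}(\mathfrak a)h_4(z,z)\in\mathfrak a\bar{\mathfrak a}\,h_4(z,z)\subset\mathfrak D_{F/\Q}^{-1}\cap\Q=\Z$ is a slightly cleaner variant of the paper's trace-determinant argument.
\item For $s=3$, your identity $d_M=d_E\cdot 23^{4-s}d_N$ is the paper's $d_P=d_Q/529$. You should justify it, via the surjection $E^\#\to M^\#$ with kernel $E^\#\cap M^\perp$, rather than hedge it. It does yield the witness $E\cap(Fp_1)^\perp$ without computing a $3\times3$ determinant; the paper uses $\sum_{i\le3}\mathcal O_Fp_i$ instead.
\item For $s=4$, deriving $d_E=1/529$ from $d_{E^\#}=d_E^{-1}$ and $E^\#=\sqrt{-23}E$ works. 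Note, however, that $E^\#\subset E$ with $[E:E^\#]=23^4$, and that $d_M=[E:M]_{\Z}\,d_E$, not $[E:M]_{\Z}^2\,d_E$.
\end{itemize}
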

\begin{proof}
First, Proposition~\ref{prop:rank4-algebraic-properties}(iv) shows that the trace lattice $(E,B_4)$ is isomorphic to $E_8$.
Since the minimum of $B_4(z,z)$ is $2$ and $B_4(z,z)=2h_4(z,z)$, we obtain $d_1^{\mathrm{proj}}(E) \leq 1$.
For every $\mathcal O_F$-sublattice $\mathfrak a e\subset E$ of rank $1$, the determinant of the trace form on $\mathfrak a e$ is $23d_{\mathfrak a e}^2$. Since the trace form is integral and positive definite, we have $23d_{\mathfrak a e}^2\in\mathbb Z_{>0}$. Since $d_{\mathfrak a e}\in\mathbb Q_{>0}$ and $23$ is squarefree, its reduced denominator must be $1$. Thus $d_{\mathfrak a e} \in\mathbb Z_{\ge1}$, and hence  $d_1^{\mathrm{proj}}(E)=1$.

For every $\mathcal O_F$-sublattice $S\subset E$ of rank $2$, since $1\le\mu^p(S)$, Proposition~\ref{prop:hermitian_constant} yields
\[
1\le \mu^p(S) \le \gamma_{2,F} d_S^{1/2} = \sqrt{\frac{23}{5}}d_S^{1/2},
\]
and therefore $d_2^{\mathrm{proj}}(E)\ge5/23$.
On the other hand, when $\delta_4$ is given by~\eqref{eq:delta4},
\begin{align*}
u := -\zeta_5-\zeta_5^2+\alpha(1+\zeta_5+\zeta_5^3), \qquad
v := 2-\zeta_5^2+\zeta_5^3+\alpha\zeta_5
\end{align*}
satisfy
\[
h_4(u,u)=h_4(v,v)=1,
\qquad
h_4(u,v)
=
\frac{23+7\sqrt{-23}}{46},
\]
so the determinant of the Gram matrix with respect to $u$ and $v$ is $5/23$. Hence
$d_2^{\mathrm{proj}}(E)=5/23$.

Let $P\subset E$ be a saturated $\mathcal O_F$-sublattice of rank $3$ and put
$Q:=(F\otimes_{\mathcal O_F}P)^\perp\cap E^\#$.
Since $P\subset E$ splits as an $\mathcal O_F$-module, the canonical homomorphism
$E^\#\to P^\#$ is surjective, with kernel $Q$. Hence it induces an isomorphism
$E^\#/Q \cong P^\#$.
Moreover, if $E^\#/Q$ is endowed with the Hermitian form induced by orthogonal projection onto $F\otimes_{\mathcal O_F}P$, this isomorphism is an isometry.
Hence we have $d_E^{-1} = d_{E^\#} = d_{P^{\#}} d_Q = d_P^{-1}d_Q$.
Note that the Gram matrix in Remark~\ref{rem:E-Gram} has determinant $1/529$, so $d_E=1/529$.
Since $E^\#=\sqrt{-23}\,E$, we obtain
\[
d_P = \frac{d_Q}{529} \ge \frac{d_1^{\mathrm{proj}}(E^\#)}{529}
=
\frac{23}{529}=\frac1{23}.
\]
Since saturation can only decrease the discriminant, the same lower bound holds for every rank $3$ $\mathcal O_F$-sublattice of $E$.
The sublattice $\sum_{i=1}^{3}\mathcal O_Fp_i$ defined in Remark~\ref{rem:E-Gram} has discriminant $1/23$.
Thus $d_3^{\mathrm{proj}}(E)=1/23$.

Finally, every full-rank sublattice $M\subset E$ satisfies $d_M=[E:M]d_E\ge d_E$, with equality for $M=E$. Hence $d_4^{\mathrm{proj}}(E)=d_E=1/529$.
\end{proof}

\subsection{\texorpdfstring{The rank $11$ Hermitian lattice $J$}{The rank 11 Hermitian lattice J}}

First, note that since the narrow ideal class group of
$\mathbb{Q}(\zeta_{23})^+$ is trivial, every ideal of
$\mathbb{Q}(\zeta_{23})^+$ admits a totally positive generator.
We also note that the relative different
$\mathfrak D_{\mathbb{Q}(\zeta_{23})/F}$
is generated over $\mathcal O_{\mathbb{Q}(\zeta_{23})}$ by a totally positive element
$\delta_{11}\in \mathcal O_{\mathbb{Q}(\zeta_{23})^+}$.
For example, one may take
$\delta_{11}=(2-\zeta_{23}-\zeta_{23}^{-1})^5$.
Put
\[
J := (47, \zeta_{23}-21) \subset \mathcal O_{\Q(\zeta_{23})}
\]
and let $a$ be a totally positive generator of $J\cap\mathcal O_{\Q(\zeta_{23})^+}$.
Since $47\equiv1\pmod{23}$, the prime $47$ splits completely in $\Q(\zeta_{23})$, and hence $a\mathcal O_{\Q(\zeta_{23})}=J\bar J$.

\begin{definition}\label{def:rank11lattice}
Define a Hermitian form $h_{11}\colon J\times J\to\mathcal O_F$ by
\[
h_{11}(u,v) := \operatorname{Tr}_{\Q(\zeta_{23})/F}((\delta_{11}a)^{-1} u\bar v).
\]
\end{definition}

\begin{proposition}\label{prop:h11-properties}
The following statements hold.
\begin{enumerate}[label=(\roman*)]
\item $(J,h_{11})$ is a Hermitian lattice.
\item $h_{11}(J,J)\subset\mathcal O_F$.
\item $(J,h_{11})$ is self-dual.
\item For every $0\ne z\in J$, one has $h_{11}(z,z)\in\mathbb Z_{\geq 1}$. Hence the trace form $B_J(x,y):=\operatorname{Tr}_{F/\Q}h_{11}(x,y)$ is a positive definite even integral symmetric bilinear form. Moreover, $\det B_J=23^{11}$.

\end{enumerate}
\end{proposition}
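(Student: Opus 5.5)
The plan is to follow the template of Proposition~\ref{prop:rank4-algebraic-properties}: every claim reduces to the duality theory of the relative trace form of $\Q(\zeta_{23})/F$. Write $K:=\Q(\zeta_{23})$. This is a CM field of degree $22$ containing $F$, with $[K:F]=11$, and complex conjugation on $K$ restricts to the nontrivial automorphism of $F$.

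\textbf{(i) Hermitian structure and definiteness.} Since $\delta_{11}$ and $a$ lie in $K^+$, we have $\overline{\delta_{11}a}=\delta_{11}a$. Hence $h_{11}(v,u)=\overline{h_{11}(u,v)}$, and $\mathcal O_F$-linearity in the first variable is clear. For $z\neq0$,
\[
h_{11}(z,z)=\sum_{\sigma|_F=\mathrm{id}}\sigma(\delta_{11}a)^{-1}|\sigma(z)|^2>0,
\]
because $\delta_{11}a$ is totally positive. Finally, $J$ is a finitely generated torsion-free $\mathcal O_F$-module of rank $11$, so it is projective, and (i) follows.

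\textbf{(ii) Integrality.} For $u,v\in J$ we have $u\bar v\in J\bar J=a\mathcal O_K$. Therefore $(\delta_{11}a)^{-1}u\bar v\in\delta_{11}^{-1}\mathcal O_K=\mathfrak D_{K/F}^{-1}$, and by the definition of the relative inverse different its trace to $F$ lies in $\mathcal O_F$.

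\textbf{(iii) Self-duality.} By (ii) we already have $J\subset J^\#$. For the reverse inclusion, take $x\in K$ with $\Tr_{K/F}((\delta_{11}a)^{-1}xJ')\subset\mathcal O_F$, where $J'=\bar J$. Since $(\delta_{11}a)^{-1}x\bar J$ is an $\mathcal O_K$-module, this condition says exactly that $(\delta_{11}a)^{-1}x\bar J\subset\mathfrak D_{K/F}^{-1}=\delta_{11}^{-1}\mathcal O_K$. This is equivalent to $x\bar J\subset a\mathcal O_K=J\bar J$, and cancelling the invertible ideal $\bar J$ gives $x\in J$. Hence $J^\#=J$. The one point to verify here is that the $\mathcal O_F$-dual computed inside $K=J\otimes_{\mathcal O_F}F$ is the same as the $\mathcal O_K$-ideal condition. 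This holds because $x\bar J$ is stable under $\mathcal O_K$, so we may use the standard fact that $\{y:\Tr_{K/F}(y\mathcal O_K)\subset\mathcal O_F\}=\mathfrak D_{K/F}^{-1}$ applied to the ideal.

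\textbf{(iv) Trace lattice and determinant.} For $z\neq 0$, $h_{11}(z,z)$ is a positive rational number lying in $\mathcal O_F$, so it lies in $\mathcal O_F\cap\Q=\Z$, and it is at least $1$. Then $B_J(z,z)=2h_{11}(z,z)\in2\Z$. Integrality of $B_J$ follows from (ii), and positive definiteness from (i). For the determinant, compute the dual of $(J,B_J)$ as in Proposition~\ref{prop:rank4-algebraic-properties}(iv):
\[
\{z\mid B_J(z,J)\subset\Z\}=\{z\mid h_{11}(z,J)\subset\mathfrak D_{F/\Q}^{-1}\}=\tfrac{1}{\sqrt{-23}}J^\#=\tfrac1{\sqrt{-23}}J .
\]
So the discriminant group of $(J,B_J)$ is $J/\sqrt{-23}J\cong(\mathcal O_F/\sqrt{-23}\mathcal O_F)^{11}$, which has order $23^{11}$. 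Since the form is positive definite, $\det B_J=23^{11}$.

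I expect no serious obstacle here. The only delicate step is (iii), which uses $a\mathcal O_K=J\bar J$ (given in the text) together with the identity $\delta_{11}\mathcal O_K=\mathfrak D_{K/F}$.
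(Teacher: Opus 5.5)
Your proposal is correct and matches the paper's proof essentially step for step. Both use $(\delta_{11}a)^{-1}J\bar J=\mathfrak D_{\Q(\zeta_{23})/F}^{-1}$ for integrality, trace duality for fractional ideals for self-duality (you re-derive inline the standard fact that the trace dual of $I$ is $\mathfrak D^{-1}I^{-1}$, which the paper simply quotes), and $J^*=\frac{1}{\sqrt{-23}}J$ to get $|J^*/J|=23^{11}$.
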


\begin{proof}
Statement (i) follows from the total positivity of $a$ and $\delta_{11}$.
To prove (ii), note that
\[
(\delta_{11}a)^{-1}J\bar J=\delta_{11}^{-1}\mathcal O_{\Q(\zeta_{23})}=\mathfrak D_{\Q(\zeta_{23})/F}^{-1}.
\]
By the definition of the inverse different, it follows that $h_{11}(J,J)\subset\mathcal O_F$.
For (iii), the trace dual of a fractional ideal $I$ is $\mathfrak D_{\Q(\zeta_{23})/F}^{-1}I^{-1}$. Hence
\begin{align*}
J^\# =(\delta_{11}a)\,
\mathfrak D_{\Q(\zeta_{23})/F}^{-1}\,\bar J^{-1} =(\delta_{11}a)\,(\delta_{11})^{-1}\bar J^{-1} =a\bar J^{-1}=J.
\end{align*}
For every $0\ne z\in J$, statements (i) and (ii) give $h_{11}(z,z)\in\mathcal O_F\cap\Q_{>0}=\mathbb Z_{\geq 1}$.
Thus $B_J(z,z)=2h_{11}(z,z)\in2\mathbb Z$.
Let $J^*$ denote the $\mathbb Z$-dual lattice of $(J,B_J)$. By the definition of the inverse different and (iii),
\[
J^*= \mathfrak D_{F/\Q}^{-1}J^\# =\frac1{\sqrt{-23}}J.
\]
Since $J$ is a projective $\mathcal O_F$-module of rank $11$, we have $|J^*/J|=|\mathcal O_F/\mathfrak D_{F/\Q}|^{11}=23^{11}$. In particular, $\det B_J=23^{11}$.
\end{proof}

\begin{remark}\label{rem:norm-surj-11}
The group $\operatorname{N}_{\Q(\zeta_{23})/\Q(\zeta_{23})^+}(\mathcal O_{\Q(\zeta_{23})}^\times)$ coincides with the group of totally positive units of $\mathcal O_{\Q(\zeta_{23})^+}$. Therefore the isometry class of the Hermitian lattice $(J,h_{11})$ is independent of the choices of $\delta_{11}$ and $a$.
\end{remark}


\subsection{\texorpdfstring{Definition of the rank $88$ even unimodular lattice}{Definition of the rank 88 even unimodular lattice}}
\label{sec:integral-normalization}

\begin{definition}
Define $(L,h):=(E,h_4)\otimes_{\mathcal O_F}(J,h_{11})$. This is a Hermitian lattice of rank $44$.
\end{definition}

\begin{remark}
Since the discriminants of $F(\zeta_5)/F$ and $\Q(\zeta_{23})/F$ are relatively prime, there is a natural isomorphism of $\mathcal O_F$-algebras
\[
\mathcal O_{F(\zeta_5)}\otimes_{\mathcal O_F}\mathcal O_{\Q(\zeta_{23})} \stackrel{\sim}{\to}  \mathcal O_{\Q(\zeta_{115})}; \quad x \otimes y \mapsto xy
\]
(cf.~\cite[Proposition~2.11]{Neukirch1999}).
This isomorphism induces $L\stackrel{\sim}{\to}J\mathcal O_{\Q(\zeta_{115})}$, and $h=h_4\otimes h_{11}$ corresponds to the form $(z,w)\mapsto\operatorname{Tr}_{\Q(\zeta_{115})/F}\!\left((\delta_4\delta_{11}a)^{-1}z\bar w\right)$.
Since $\delta_4\delta_{11}$ is a generator of $\mathfrak D_{\Q(\zeta_{115})/\Q}$,
the lattice $(L,h)$ is isomorphic to the ideal lattice associated with $J\mathcal O_{\Q(\zeta_{115})}$.
\end{remark}

\begin{proposition}\label{prop:tensor-decomposition}
The following statements hold.
\begin{enumerate}[label=(\roman*)]

\item $h(L,L)\subset\mathfrak D_{F/\Q}^{-1}$ and $L^\#=\sqrt{-23}\,L$. In particular, $h(z,z) \in \mathbb{Z}_{\ge 1}$ for any $0 \neq z \in L$.

\item Put $B(x,y):=\operatorname{Tr}_{F/\Q}h(x,y)$. Then $(L,B)$ is a positive definite even unimodular lattice of rank $88$.
\end{enumerate}
\end{proposition}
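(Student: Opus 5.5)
The plan is to deduce everything from the corresponding properties of the two factors, Proposition~\ref{prop:rank4-algebraic-properties} and Proposition~\ref{prop:h11-properties}, using that the tensor product of Hermitian forms is multiplicative on pure tensors.

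For (i), let $u,u'\in E$ and $v,v'\in J$. Then $h(u\otimes v,u'\otimes v')=h_4(u,u')h_{11}(v,v')$. By Proposition~\ref{prop:rank4-algebraic-properties}(ii) the first factor lies in $\mathfrak D_{F/\Q}^{-1}$. By Proposition~\ref{prop:h11-properties}(ii) the second lies in $\mathcal O_F$. Since pure tensors generate $L$ over $\mathcal O_F$, sesquilinearity gives $h(L,L)\subset\mathfrak D_{F/\Q}^{-1}$.

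For the dual, I will use the standard fact that, for projective $\mathcal O_F$-lattices, $(L_1\otimes L_2)^\#=L_1^\#\otimes L_2^\#$ inside $V_1\otimes_F V_2$. To justify it, localize at each prime $\mathfrak p$ of $\mathcal O_F$. There both lattices are free, with dual bases, and the tensor products of dual bases give the dual basis of the tensor product. Combining this with $E^\#=\sqrt{-23}\,E$ (Proposition~\ref{prop:rank4-algebraic-properties}(iii)) and $J^\#=J$ (Proposition~\ref{prop:h11-properties}(iii)) yields $L^\#=\sqrt{-23}\,E\otimes J=\sqrt{-23}\,L$.

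For the last claim of (i), take $0\ne z\in L$. Positive definiteness of $h$ follows from that of $h_4$ and $h_{11}$: diagonalize each over $\mathbb C$ and note that the tensor product of positive definite Hermitian forms is positive definite. Hence $h(z,z)$ lies in $\Q_{>0}\cap\frac{1}{\sqrt{-23}}\mathcal O_F$. An element $x\in\Q$ with $\sqrt{-23}\,x\in\mathcal O_F$ has trace $0$ and norm $23x^2\in\Z$, so $x\in\Z$. Therefore $h(z,z)\in\Z_{\ge1}$.

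For (ii), positive definiteness and evenness are immediate, since $B(z,z)=\operatorname{Tr}_{F/\Q}h(z,z)=2h(z,z)\in 2\Z_{\ge 1}$. The rank of $L$ over $\Z$ is $2\cdot 44=88$. For unimodularity, I will argue exactly as in the proof of Proposition~\ref{prop:rank4-algebraic-properties}(iv). The $\Z$-dual is
\[
L^*=\{z\mid h(z,L)\subset\mathfrak D_{F/\Q}^{-1}\}=\{z\mid \sqrt{-23}\,z\in L^\#\}.
\]
This uses that $h(z,L)$ is an $\mathcal O_F$-module, so $\operatorname{Tr}(h(z,L))\subset\Z$ holds exactly when $h(z,L)\subset\mathfrak D_{F/\Q}^{-1}$. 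Since $L^\#=\sqrt{-23}\,L$, we get $L^*=L$.

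The only step needing real care is the identity $(E\otimes J)^\#=E^\#\otimes J^\#$, since $J$ need not be free. The localization argument handles this cleanly, because saturation and duality commute with localization.

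As a cross-check, the determinant of the trace form of a projective rank-$n$ lattice scales with $d_L^2$ and the discriminant of $F$. Using $d_E=1/529$, the self-duality of $J$, and $d_{L_1\otimes L_2}=d_{L_1}^{r_2}d_{L_2}^{r_1}$, one gets $d_L=23^{-22}$. Together with the factor $23^{44}$ coming from $|\mathfrak D_{F/\Q}|^{44}$, this is consistent with $\det B=1$.
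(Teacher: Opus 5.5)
Your proof is correct and is essentially the paper's argument: integrality of $h$ factor by factor, $L^\#=E^\#\otimes_{\mathcal O_F}J^\#=\sqrt{-23}\,L$, evenness from $B(z,z)=2h(z,z)$, and unimodularity via $\{z\mid B(z,L)\subset\Z\}=\{z\mid \sqrt{-23}\,z\in L^\#\}$. You also fill in details the paper leaves implicit: positivity of $h$, $\Q\cap\mathfrak D_{F/\Q}^{-1}=\Z$, and the duality of tensor products.

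The localization step needs tightening. Because $h$ is conjugate-linear in the second variable, the dual basis of an $\mathcal O_{F,\mathfrak q}$-basis of $L_{\mathfrak q}$ spans $(L^\#)_{\bar{\mathfrak q}}$, not $(L^\#)_{\mathfrak q}$. This is harmless, since $\mathfrak q$ runs over all primes; alternatively, localize at rational primes instead.
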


\begin{proof}
For (i), Propositions~\ref{prop:rank4-algebraic-properties}(ii) and \ref{prop:h11-properties}(ii) give
\[
h_4(E,E)\subset\mathfrak D_{F/\Q}^{-1},
\qquad
h_{11}(J,J)\subset\mathcal O_F,
\]
and hence $(h_4\otimes h_{11})(L,L)\subset\mathfrak D_{F/\Q}^{-1}$. Moreover, Propositions \ref{prop:rank4-algebraic-properties}(iii) and \ref{prop:h11-properties}(iii) imply
\[
L^\#
=(E\otimes_{\mathcal O_F}J)^\#
= E^\#\otimes_{\mathcal O_F}J^\#
=(\sqrt{-23}\,E)\otimes_{\mathcal O_F}J
=\sqrt{-23}\,L.
\]
We next prove (ii).
Since $h(z,z)\in\mathbb Z_{\ge1}$ for every nonzero $z$, we have $B(z,z)=2h(z,z)\in2\mathbb Z$. Hence $B$ is even.
Furthermore, by the definition of the inverse different and (i),
\begin{align*}
\{z\in L\otimes_{\mathcal O_F}F \mid B(z,L)\subset\mathbb Z\}
=\{z \mid h(z,L)\subset\mathfrak D_{F/\Q}^{-1}\} = \{z \mid \sqrt{-23}\,z\in L^\#\} = L,
\end{align*}
and therefore $B$ is unimodular.
\end{proof}

The following theorem is the main result of this section. Its proof is given in \S\ref{sec:rank4-audit}.

\begin{theorem}\label{thm:main}
For every $0\ne z\in L$, one has $h(z,z)\ge4$.
\end{theorem}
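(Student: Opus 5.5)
Split nonzero vectors $z\in L=E\otimes_{\mathcal O_F}J$ according to their tensor rank $s\in\{1,2,3,4\}$; $s\le 4=\operatorname{rank}E$. For each $s$ we apply Proposition~\ref{prop:inequality-tensor} with $L_1=E$ and $L_2=J$, which gives
\[
h(z,z)\ge s\bigl(d_s^{\mathrm{proj}}(E)\,d_s^{\mathrm{proj}}(J)\bigr)^{1/s}.
\]
The values of $d_s^{\mathrm{proj}}(E)$ are supplied by Proposition~\ref{prop:d_s(E)}. The task therefore reduces to lower bounds for $d_s^{\mathrm{proj}}(J)$, $s\le4$, strong enough that the right-hand side is at least $4$. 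Because $h(z,z)\in\Z$ by Proposition~\ref{prop:tensor-decomposition}(i), it is enough to get the bound strictly above $3$.

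The required bounds are as follows.

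For $s=1$ we need $d_1^{\mathrm{proj}}(J)\ge 4$. Since $J$ is self-dual and the trace form is even, this amounts to the statement that the minimum of the trace lattice $(J,B_J)$ is at least $8$, i.e.\ $h_{11}(z,z)\ge4$ for $0\ne z\in J$. Indeed, for a rank-$1$ sublattice $\mathfrak a e$ we have $d_{\mathfrak a e}=\Nm(\mathfrak a)h_{11}(e,e)$, and $\mathfrak a e$ contains vectors whose norm equals this up to the class-group factor. This is a computation in a rank-$22$ $\Z$-lattice of determinant $23^{11}$. I would certify it by a Fincke--Pohst enumeration of the vectors of norm at most $6$. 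The $C_{23}$-symmetry coming from multiplication by $\zeta_{23}$ can be used to cut down the work.

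For $s=2$ we need $2\sqrt{(5/23)\,d_2^{\mathrm{proj}}(J)}>3$, i.e.\ $d_2^{\mathrm{proj}}(J)>207/20\approx10.35$. For $s=3$ we need $3\bigl(d_3^{\mathrm{proj}}(J)/23\bigr)^{1/3}>3$, i.e.\ $d_3^{\mathrm{proj}}(J)>23$. For $s=4$ we need $4\bigl(d_4^{\mathrm{proj}}(J)/529\bigr)^{1/4}>3$, i.e.\ $d_4^{\mathrm{proj}}(J)>529\cdot(3/4)^4\approx167.4$.

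For these I would argue as in Proposition~\ref{prop:d_s(E)}. A rank-$s$ sublattice $S$ satisfies $\mu^p(S)\ge d_1^{\mathrm{proj}}(J)$. Combining this with the Hermitian constants of Proposition~\ref{prop:hermitian_constant} gives, for $s=2$,
\[
d_S\ge \frac{5}{23}\,\mu^p(S)^2\ge\frac{5\cdot16}{23}\approx3.5,
\]
and for $s=3$,
\[
d_S\ge \frac{\mu^p(S)^3}{\gamma_{3,F}^{(c)3}}\ge \frac{64}{46}\ \text{or}\ \frac{64\cdot1029}{30613}.
\]
Both bounds are far too weak. So the generic bounds will not suffice, and the main obstacle is exactly these higher-rank cases.

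My plan there is a finer case analysis using the actual small vectors of $J$. Write $S=\mathfrak a_1e_1\oplus\cdots$ with $e_1$ a shortest vector of $S$. Then $d_S=d_{\mathfrak a_1e_1}\cdot d_{S'}$, where $S'$ is the projection of $S$ onto $e_1^\perp$. If $\mu^p(S)$ is large (say at least $6$), the Hermitian-constant bound already suffices, at least for $s=2$. Otherwise $S$ contains one of the finitely many short vectors, enumerated up to the automorphisms of $J$, and we bound the projected lattice. Its minimum is controlled by enumerating the pairs of short vectors in $J$ and computing the $2\times2$ Gram determinants directly. This becomes a finite certified computation over the orbits of vectors with $h_{11}\le$ some explicit bound. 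The cases $s=3,4$ are handled the same way, peeling off one vector at a time.

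If even this fails for $s=4$, I would instead try the alternative decomposition in which $z$ has tensor rank $s$ in $E\otimes J$ and we use duality inside $E$. Since $E^\#=\sqrt{-23}E$, rank-$3$ and rank-$4$ subspaces of $E$ relate to rank-$1$ and rank-$0$ complements. A symmetric trick on $J$ then gives a bound on $d_s^{\mathrm{proj}}(J)$ through $J=J^\#$, in the same way $d_3^{\mathrm{proj}}(E)$ was obtained from $d_1^{\mathrm{proj}}(E^\#)$.
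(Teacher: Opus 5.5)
\textbf{Gap at tensor rank $4$.} Your treatment of tensor ranks $s\le 3$ matches the paper. It uses Coulangeon's inequality with $d_s^{\mathrm{proj}}(E)=(1,5/23,1/23)$, together with an enumeration: short rank-$1$ pieces up to $\Aut_{\mathcal O_F}(J,h_{11})$, then extension controlled by the Hermitian constants. This yields $d_s^{\mathrm{proj}}(J)=6,16,27$, which clear your thresholds $4$, $10.35$, $23$.

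The case $s=4$ fails. Your required bound $d_4^{\mathrm{proj}}(J)>529\cdot(3/4)^4\approx167.4$ is false. $J$ contains a free rank-$4$ sublattice of discriminant $48$ (the paper gives its Gram matrix), so $d_4^{\mathrm{proj}}(J)=48$. Proposition~\ref{prop:inequality-tensor} then gives only $h(z,z)\ge 4(48/529)^{1/4}\approx 2.19$, i.e.\ $h(z,z)\ge 3$. No refined enumeration can certify a lower bound above the true value. Your fallback via $J=J^\#$ only relates $d_P$ to $d_{P^\perp}$ for complementary saturated sublattices of ranks $4$ and $7$, so it cannot change $d_4^{\mathrm{proj}}(J)$ either. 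For $s=4$ one necessarily has $M_1=E$, so there is nothing left to optimize in that inequality. The loss is in the AM--GM step.

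\textbf{What the paper does instead.} You need an argument that excludes $h(z,z)=3$ for tensor rank $4$, even though rank-$4$ sublattices of $J$ with $48\le d\le 167$ exist. The paper replaces AM--GM by an exact identity. Write $z=\sum_{i=1}^4 p_i\otimes x_i$ in the basis of Remark~\ref{rem:E-Gram}. There is a triangular $T\in M_4(\mathcal O_F)$ with $\det T=-23$ such that $y=Tx$ satisfies $y_i\in J$ and $23h(z,z)=\sum_i h_{11}(y_i,y_i)$.

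Hence $h(z,z)=3$ forces $\operatorname{tr}H_Y=69$, with every diagonal entry at least $6$. This leaves finitely many candidate matrices $H_Y=TH_X\overline{T}^{\mathsf T}$, hence candidates $H_X$ with $48\le\det H_X\le167$. That upper bound is exactly your threshold. For each candidate one must decide whether a free $\mathcal O_F$-lattice with Gram matrix $H_X$ embeds isometrically into $J$. The paper's computation reduces $8159$ candidates to none (Proposition~\ref{prop:tensor-rank-4-case2}). Without some embedding-level argument of this kind, your proof stops at $h(z,z)\ge 3$ for tensor rank $4$.

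\textbf{A smaller point at $s=1$.} Reducing to the minimum of $(J,B_J)$ is not an equivalence. For a non-free $\mathfrak p e\subset J$, the shortest vectors are $\pm 2e$, with $h_{11}(2e,2e)=2d_{\mathfrak p e}$. So even the true minimum $6$ of $h_{11}$ on $J$ only gives $d_{\mathfrak p e}\ge 3$, which is not enough. You have to enumerate in each $\mathfrak a^{-1}J$ with the scaled form $\Nm_{F/\Q}(\mathfrak a)h_{11}$, as the paper does. An enumeration in $J$ alone, as you propose, misses these sublattices.
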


\begin{corollary}\label{cor:extremal}
The lattice $(L,B)$ is an extremal positive definite even unimodular lattice of rank $88$.
\end{corollary}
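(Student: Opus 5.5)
The corollary follows from Proposition~\ref{prop:tensor-decomposition} together with Theorem~\ref{thm:main}, which I take as given. The plan has three short steps.

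First, by Proposition~\ref{prop:tensor-decomposition}(ii), $(L,B)$ is a positive definite even unimodular $\mathbb Z$-lattice. Its $\mathbb Z$-rank is $88$, because $L$ is a projective $\mathcal O_F$-module of rank $4\cdot 11=44$ and $\mathcal O_F$ is free of rank $2$ over $\mathbb Z$.

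Second, I will compute the minimum. For $z\in L$ we have $B(z,z)=\operatorname{Tr}_{F/\Q}h(z,z)=2h(z,z)$, since $h(z,z)\in\Q$. Theorem~\ref{thm:main} gives $h(z,z)\ge 4$ for every $z\neq0$, so $B(z,z)\ge 8$ and hence $\min(L,B)\ge 8$.

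Third, the general bound recalled in the introduction gives
\[
\min(L,B)\le 2\left\lfloor\frac{88}{24}\right\rfloor+2=2\cdot 3+2=8.
\]
Combining the two inequalities yields $\min(L,B)=8$, so equality holds in the bound and $(L,B)$ is extremal by definition.

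There is no real obstacle in this argument. All the substance lies in Theorem~\ref{thm:main}, proved in \S\ref{sec:rank4-audit}. That proof would presumably split $z$ by tensor rank $s\in\{1,2,3,4\}$ and apply Proposition~\ref{prop:inequality-tensor} with the values from Proposition~\ref{prop:d_s(E)} and analogous bounds for $J$; any low-rank cases that escape this estimate would be settled by computation.
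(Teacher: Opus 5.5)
Your proof is correct and is the same argument as the paper's. Proposition~\ref{prop:tensor-decomposition}(ii) gives positive definiteness, evenness, unimodularity and rank $88$, and Theorem~\ref{thm:main} gives $B(z,z)=2h(z,z)\ge 8$, which meets the bound $2\lfloor 88/24\rfloor+2=8$. One small correction to your closing aside: in the paper, the case that escapes Coulangeon's estimate and needs computation is the maximal tensor rank $4$, not a low one; Proposition~\ref{prop:tensor-rank-4-case2} rules out norm $3$ there.
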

\begin{proof}
For $z\in L$, we have $B(z,z)=2h(z,z)$. Thus Theorem~\ref{thm:main}, together with $2\left\lfloor88/24\right\rfloor+2=8$, shows that $(L,B)$ is  extremal.
\end{proof}

\begin{remark}\label{rem:description}
We describe the analogy between Nebe's lattice \(\Gamma_{72}\) and the lattice \((L,h)\).
In \cite[Theorem~4.5]{Nebe2016}, Nebe proved that $\Gamma_{72}$ is represented by an ideal lattice
$(I,b_\alpha)$ for each of the six ideal classes $[I]$ of order $7$ in
$\operatorname{Cl}(\mathbb{Q}(\zeta_{91}))$.
Since
\[
\operatorname{Cl}\bigl(\mathbb{Q}(\sqrt{-7},\zeta_{13})\bigr)\cong C_7 \qquad
\text{and} \qquad
[\mathbb{Q}(\zeta_{91}):\mathbb{Q}(\sqrt{-7},\zeta_{13})]=3,
\]
the canonical homomorphism
$\operatorname{Cl}\bigl(\mathbb{Q}(\sqrt{-7},\zeta_{13})\bigr)
\hookrightarrow
\operatorname{Cl}\bigl(\mathbb{Q}(\zeta_{91})\bigr)$
is injective. Its image is therefore the subgroup of order $7$ in
$\operatorname{Cl}(\mathbb{Q}(\zeta_{91}))$.
Consequently, the ideal defining $\Gamma_{72}$ can be taken to be the extension of an ideal $I'$ of $\mathbb{Q}(\sqrt{-7},\zeta_{13})$, and hence $\Gamma_{72}$ admits a description of the form
\[
I' \otimes_{\mathcal{O}_{\mathbb{Q}(\sqrt{-7})}} \mathcal{O}_{\mathbb{Q}(\zeta_7)}.
\]
Choose a totally positive generator in
$\mathcal O_{\mathbb Q(\zeta_7)^+}$ of the relative different
$\mathfrak D_{\mathbb Q(\zeta_7)/\mathbb Q(\sqrt{-7})}$ and, as in
Definition~\ref{def:rank11lattice}, use it to endow $\mathcal O_{\mathbb Q(\zeta_7)}$ with the structure of a Hermitian lattice over $\mathcal O_{\mathbb Q(\sqrt{-7})}$.
It is easily verified that the resulting Hermitian lattice is isometric to the Barnes lattice $P_b$.
Since $L =  \mathcal{O}_{F(\zeta_5)} \otimes_{\mathcal{O}_F}J$, the lattice $L$ may, in this sense, be regarded as an $88$-dimensional analogue of $\Gamma_{72}$.
\end{remark}

\section{\texorpdfstring{The rank $11$ Hermitian lattice $(J,h_{11})$}{The rank 11 Hermitian lattice (J,h11)}}\label{sec:d1d2}

In this section, we study the structure of the rank \(11\) Hermitian lattice \((J,h_{11})\) over \(\mathcal O_F\) and compute \(d_1^{\mathrm{proj}}(J)\), \(d_2^{\mathrm{proj}}(J)\), \(d_3^{\mathrm{proj}}(J)\), and \(d_4^{\mathrm{proj}}(J)\). The computations used in this section are available in the  GitHub repository. 
For the computations, one must make explicit choices of $\delta_{11}$ and $a$; in this paper we take
\begin{align*}
\delta_{11} &=  (2-\zeta_{23}-\zeta_{23}^{-1})^5,
\\
a &=13
-(\zeta_{23}+\zeta_{23}^{-1})
-4(\zeta_{23}^{2}+\zeta_{23}^{-2})
-(\zeta_{23}^{3}+\zeta_{23}^{-3})
+6(\zeta_{23}^{4}+\zeta_{23}^{-4})
+5(\zeta_{23}^{5}+\zeta_{23}^{-5})
\\
&\quad -5(\zeta_{23}^{6}+\zeta_{23}^{-6})
+4(\zeta_{23}^{8}+\zeta_{23}^{-8})
+3(\zeta_{23}^{9}+\zeta_{23}^{-9})
+(\zeta_{23}^{10}+\zeta_{23}^{-10}).
\end{align*}
Also note that $\mathrm{Cl}(F)\cong C_3$, with representatives
\[
\mathcal O_F, \qquad \mathfrak p := \left(2,\alpha\right), \qquad \bar{\mathfrak p} := \left(2,\bar\alpha\right).
\]
For every $\mathcal O_F$-sublattice $S\subset J$, the integrality of
$h_{11}$ implies $d_S\in\mathbb Z_{>0}$.
Moreover, saturation can only decrease the discriminant, so it suffices
to consider saturated sublattices when proving projective lower bounds.

\subsection{\texorpdfstring{$d_1^{\mathrm{proj}}(J) = 6$}{d1 projective(J) = 6}}

Every rank $1$ projective sublattice of $J$ can be written as $\mathfrak a e\subset J$, with $\mathfrak{a} \in \{ \mathcal{O}_F, \mathfrak{p}, \overline{\mathfrak{p}} \}$ and $e\in\mathfrak a^{-1}J$, and  $d_{\mathfrak a e} = \operatorname{N}_{F/\mathbb Q}(\mathfrak a)h_{11}(e,e)$.
An exact enumeration of all nonzero vectors up to the required norm bounds in the three lattices $\mathfrak a^{-1}J$, with the Hermitian forms $\operatorname{N}_{F/\mathbb Q}(\mathfrak a)h_{11}$, gives the following proposition.

\begin{proposition}\label{prop:d_1(J)}
We have $d_1^{\mathrm{proj}}(J)=6$.
\end{proposition}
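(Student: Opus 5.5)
The plan is to reduce $d_1^{\mathrm{proj}}(J)$ to three finite short-vector enumerations, one per ideal class of $F$. Since $\mathrm{Cl}(F)=\{[\mathcal O_F],[\mathfrak p],[\bar{\mathfrak p}]\}$, every rank $1$ sublattice of $J$ is isometric to $\mathfrak a e$ with $\mathfrak a\in\{\mathcal O_F,\mathfrak p,\bar{\mathfrak p}\}$ and $0\ne e\in\mathfrak a^{-1}J$. Replacing $(\mathfrak a,e)$ by $(\lambda\mathfrak a,\lambda^{-1}e)$ for $\lambda\in F^\times$ does not change $d_{\mathfrak a e}=\Nm_{F/\Q}(\mathfrak a)h_{11}(e,e)$, so this normalisation loses nothing. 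Hence
\[
d_1^{\mathrm{proj}}(J)=\min_{\mathfrak a}\ \min_{0\ne e\in\mathfrak a^{-1}J}\Nm_{F/\Q}(\mathfrak a)\,h_{11}(e,e).
\]
We already know $d_S\in\Z_{>0}$ for every $S\subset J$, and $\Nm_{F/\Q}(\mathfrak p)=\Nm_{F/\Q}(\bar{\mathfrak p})=2$.

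\textbf{Step 1: the free case $\mathfrak a=\mathcal O_F$.} Here the quantity is $h_{11}(e,e)=\tfrac12B_J(e,e)$ for $e\in J$. I would fix explicit $\Z$-bases: $J=(47,\zeta_{23}-21)$ is a rank $22$ $\Z$-module, and the choices of $\delta_{11}$ and $a$ stated above give exact rational Gram matrices of $B_J$ (determinant $23^{11}$ by Proposition~\ref{prop:h11-properties}(iv)). After LLL reduction, Fincke--Pohst enumeration of all $e$ with $h_{11}(e,e)\le 5$ should show there are none, and exhibit a vector with $h_{11}(e,e)=6$. This gives the upper bound $6$.

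\textbf{Step 2: the nonfree classes.} For $\mathfrak a=\mathfrak p$ one must show $2h_{11}(e,e)\ge 6$, i.e.\ $h_{11}(e,e)\ge 3$, for all $0\ne e\in\mathfrak p^{-1}J$. Note that $h_{11}$ takes values in $\tfrac12\Z$ on $\mathfrak p^{-1}J$. Concretely, $\mathfrak p^{-1}J$ is a $\Z$-lattice of rank $22$ containing $J$ with index $4$, and its $\Z$-basis can be computed from $\mathfrak p^{-1}=\tfrac12\,\bar{\mathfrak p}$ (or as $\{x\in\tfrac12J : \bar\alpha x\in J\}$). I would enumerate all vectors with $2h_{11}(e,e)\le 5$ and verify there are none. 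The case $\bar{\mathfrak p}$ follows the same way, or alternatively from complex conjugation if it induces a semilinear isometry. Because $J$ is not stable under conjugation ($\bar J\ne J$), I would simply run the enumeration again for $\bar{\mathfrak p}$.

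\textbf{Main obstacle.} The mathematics is routine; the difficulty is making the enumeration rigorous and certified in dimension $22$ with exact arithmetic. All work should be done with rational Gram matrices obtained from exact trace computations in $\Q(\zeta_{23})$. The bounds are small relative to the determinant $23^{11}$, so after LLL reduction the enumeration should be cheap. For reproducibility, a certificate listing the counts of vectors at each norm up to the bound would be recorded.
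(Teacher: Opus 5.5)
Your proposal is correct and is essentially the paper's own argument: write every rank-one sublattice as $\mathfrak a e$ with $\mathfrak a\in\{\mathcal O_F,\mathfrak p,\bar{\mathfrak p}\}$ and $e\in\mathfrak a^{-1}J$, then run an exact short-vector enumeration in each of the three lattices $\mathfrak a^{-1}J$ with the form $\Nm_{F/\Q}(\mathfrak a)h_{11}$. There are two harmless slips, neither of which affects the argument since you enumerate both nonfree classes anyway: first, $[\mathfrak p^{-1}J:J]=2^{11}$, not $4$, because $\mathfrak p^{-1}J/J\cong J/\mathfrak pJ\cong\mathbb F_2^{11}$; second, for $\mathfrak p=(2,\alpha)$ the set $\{x\in\tfrac12J:\bar\alpha x\in J\}$ is $\bar{\mathfrak p}^{-1}J$ rather than $\mathfrak p^{-1}J$, since the correct condition is $\alpha x\in J$.
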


\subsection{\texorpdfstring{$\Aut_{\mathcal O_F}(J,h_{11})$}{Automorphism group of (J,h11)}}\label{sec:autgroup}

Define the automorphism group of $(J,h_{11})$ by
\[
\Aut_{\mathcal O_F}(J,h_{11}) := \{g\in\GL_{\mathcal O_F}(J) \mid h_{11}(gx,gy)=h_{11}(x,y)\text{ for all }x,y\in J\}.
\]
Computation shows that $\Aut_{\mathbb Z}(J,B_J)$ acts faithfully and transitively on $\{x\in J\mid B_J(x,x)=12\}$. The stabilizer has order $12$, and
\[
\Aut_{\mathcal O_F}(J,h_{11}) = \Aut_{\Z}(J,B_J) \cong C_2\times\PSL_2(23)
\]
(see Proposition~\ref{grp:prop:Jgroups}). 
The following tables give the numbers of $\Aut_{\mathcal O_F}(J,h_{11})$-orbits of saturated rank $1$ $\mathcal O_F$-sublattices $P\subset J$, classified by discriminant and Steinitz class:
\[
\begin{array}{c|rrr}
d_{P}&
[\mathcal O_F]&
[\mathfrak p]&
[\bar{\mathfrak p}]\\
\hline
6 &1&1&1\\
8 &3&4&3\\
9 &3&3&3\\
10&12&9&12\\
11&14&14&14
\end{array}
\qquad
\begin{array}{c|rrr}
d_{P}&
[\mathcal O_F]&
[\mathfrak p]&
[\bar{\mathfrak p}]\\
\hline
12&43&45&43\\
13&62&68&62\\
14&158&160&158\\
15&268&264&268\\
16&533&541&533
\end{array}
\qquad
\begin{array}{c|rrr}
d_{P}&
[\mathcal O_F]&
[\mathfrak p]&
[\bar{\mathfrak p}]\\
\hline
17&920&903&920\\
18&1693&1661&1693\\
19&2749&2748&2749\\
20&4667&4722&4667\\
21&7405&7499&7405
\end{array}
\]
The equality, throughout the displayed range, of the orbit counts for the Steinitz classes $[\mathcal O_F]$ and $[\bar{\mathfrak p}]$ is explained by a semilinear similitude of $J$.

First, as an element of $\Cl(\mathcal O_{\Q(\zeta_{23})})\cong C_3$, one has $[J]=[\mathfrak p\mathcal O_{\Q(\zeta_{23})}]$.
Since $J\bar J$ is principal, we obtain $[\bar{\mathfrak p}^{-1}J\bar J^{-1}]=[\mathfrak p\mathcal O_{\Q(\zeta_{23})}][J]^2=[\mathfrak p\mathcal O_{\Q(\zeta_{23})}]^3=1$.
Hence there exists $\beta\in\Q(\zeta_{23})^\times$ such that $\beta\bar J=\bar{\mathfrak p}^{-1}J$.
Multiplying by the complex conjugate gives
\[
(\beta\bar\beta) =
(\mathfrak p\bar{\mathfrak p}\mathcal O_{\Q(\zeta_{23})})^{-1} =  \left(2^{-1}\right).
\]
Thus $2\beta\bar\beta$ is a totally positive unit in $\mathcal O_{\Q(\zeta_{23})^+}$.
Moreover, since the unit signature rank of $\Q(\zeta_{23})^+$ is maximal,
there exists $v\in\mathcal O_{\Q(\zeta_{23})^+}^{\times}$ such that $2\beta\bar\beta=v^2$.
Replacing $\beta$ by $\beta/v$, we may arrange that $2\beta\bar\beta=1$.
Now define
\[
T(x) := \beta\bar x.
\]
Then $T(J)=\bar{\mathfrak p}^{-1}J$ and $T(cx)=\bar c\,T(x)$ for $c\in F$.
Moreover,
\begin{align*}
h_{11}(Tx,Ty)
=
\Tr_{\Q(\zeta_{23})/F}
\bigl((\delta_{11}a)^{-1}
\beta\bar\beta\,\bar x y\bigr) =
\frac12
\Tr_{\Q(\zeta_{23})/F}
\bigl((\delta_{11}a)^{-1}\bar x y\bigr) =
\frac12\,\overline{h_{11}(x,y)}.
\end{align*}
Let $P=\mathfrak a e\subset J$ be an $\mathcal O_F$-sublattice of rank 1.
Then $P^\dagger:=\bar{\mathfrak p}\,T(P)=\bar{\mathfrak p}\,\bar{\mathfrak a}\,T(e)\subset J$ is again an $\mathcal O_F$-sublattice  of rank 1, and
$d_{P^\dagger}=2 \operatorname{N}_{F/\Q}(\mathfrak a)h_{11}(T(e),T(e))=d_{P}$.
Therefore $P\mapsto P^\dagger$ is a discriminant-preserving
involution on the $\mathcal O_F$-sublattices of rank $1$.
Moreover, for any $g\in \Aut_{\mathcal O_F}(J,h_{11})$, one has $(gP)^\dagger=(TgT^{-1})P^\dagger$, so the numbers of $\Aut_{\mathcal O_F}(J,h_{11})$-orbits are also preserved.

\subsection{\texorpdfstring{$d_2^{\mathrm{proj}}(J) = 16$}{d2 projective(J) = 16}}

\begin{proposition}\label{prop:d_2(J)}
We have $d_2^{\mathrm{proj}}(J)=16$.
\end{proposition}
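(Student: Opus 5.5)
We prove $d_2^{\mathrm{proj}}(J)=16$ by establishing a lower bound and exhibiting a sublattice that attains it. For the upper bound, we will find a rank $2$ sublattice of discriminant $16$. A natural choice is $S=\mathcal O_Fx\oplus\mathcal O_Fy$ with $x,y$ two vectors of $h_{11}$-norm $6$ or $8$ (minimal vectors coming from the discriminant-$6$ orbit of Proposition~\ref{prop:d_1(J)}). We select a pair with $h_{11}(x,x)h_{11}(y,y)-|h_{11}(x,y)|^2=16$, for instance $6\cdot 6-|h|^2=16$ with $|h_{11}(x,y)|^2=20$; such an $h$ exists, e.g.\ $h=\alpha+\dots$, since $\operatorname{N}(\alpha)=6$ and norms from $\mathcal O_F$ include $20=\operatorname{N}(\ldots)$ if representable. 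The pair will be found by computer search. Non-free sublattices $\mathfrak a e\oplus\mathfrak b f$ must also be searched, with discriminant $\operatorname{N}(\mathfrak a\mathfrak b)\det$.

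For the lower bound, we reduce to saturated rank $2$ sublattices $S\subset J$ and note that $d_S\in\mathbb Z_{>0}$. Suppose $d_S\le 15$. By Proposition~\ref{prop:hermitian_constant}, $\mu^p(S)\le\sqrt{23/5}\,d_S^{1/2}\le\sqrt{23\cdot 15/5}=\sqrt{69}<8.31$. Hence $S$ contains a rank $1$ sublattice $P$ with $d_P\le 8$, which we may take saturated in $S$ and hence in $J$. By the orbit table, $d_P\in\{6,8\}$, and up to $\Aut_{\mathcal O_F}(J,h_{11})$ there are only $3+3+3+4+3=16$ choices of such $P$, spread over three Steinitz classes. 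The remaining data is a complement: $S/P$ is a rank $1$ projective module, and $d_S=d_P\cdot d_{\pi(S)}$, where $\pi$ is orthogonal projection onto $(FP)^\perp$ and $\pi(S)\subset\pi(J)$. So for each orbit representative $P$ we enumerate rank $1$ sublattices $\mathfrak b\,\pi(f)$ of the projected lattice $\pi(J)$ (rank $10$, explicitly computable) with $\operatorname{N}(\mathfrak b)h(\pi f,\pi f)\le 15/d_P$, i.e.\ at most $2.5$ for $d_P=6$ and $1.875$ for $d_P=8$. We then lift each to $J$ and check saturation. This is a finite short-vector enumeration in three twisted copies $\mathfrak b^{-1}\pi(J)$, $\mathfrak b\in\{\mathcal O_F,\mathfrak p,\bar{\mathfrak p}\}$.

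The main obstacle is this enumeration: we must show that no candidate yields $d_S\le15$. The bound $\mu^p\le\sqrt{69}$ is essential for restricting $P$ to small orbits, and the stabilizers (of order up to $12$) reduce the work further. The involution $P\mapsto P^\dagger$ allows us to treat the classes $[\mathcal O_F]$ and $[\bar{\mathfrak p}]$ together. We will also double-check rationality: $d_{\pi(S)}$ is rational with denominator dividing $d_P$, so the thresholds are exact. With the computation showing every candidate has $d_S\ge16$, together with the explicit example, we conclude $d_2^{\mathrm{proj}}(J)=16$.
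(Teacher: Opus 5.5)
Your lower bound is the paper's argument. The bound $\mu^p(S)\le\sqrt{23/5}\cdot\sqrt{15}=\sqrt{69}$ gives a saturated rank-$1$ sublattice $P\subset S$ with $d_P\in\{6,8\}$. For each $\Aut_{\mathcal O_F}(J,h_{11})$-orbit of such $P$ and each $\mathfrak b\in\{\mathcal O_F,\mathfrak p,\bar{\mathfrak p}\}$, you then enumerate complements in the induced form on $J/P$. This is exactly the paper's search for $f$ with $0<\Nm_{F/\Q}(\mathfrak a)\Nm_{F/\Q}(\mathfrak b)D_e(f)\le15$. One slip: the table gives $1+1+1$ orbits with $d_P=6$ and $3+4+3$ with $d_P=8$, so there are $13$ orbits, not $16$. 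Your extra reduction via $P\mapsto P^\dagger$ is valid, since $S\mapsto\bar{\mathfrak p}T(S)$ also preserves discriminants of rank-$2$ sublattices, but the paper does not need it.

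The gap is the upper bound $d_2^{\mathrm{proj}}(J)\le16$. Your proposal only promises that a search will find a witness, and the concrete instance you sketch cannot exist. $20$ is not a norm from $\mathcal O_F$: $a^2+ab+6b^2=20$ is equivalent to $(2a+b)^2+23b^2=80$, which has no integer solutions. Equivalently, $5$ is inert in $F$, so no norm has $5$-adic valuation $1$. Hence two vectors of $h_{11}$-norm $6$ never span a sublattice of discriminant $16$. The other combinations you mention ($6\cdot 8-32$ and $8\cdot 8-48$) are not excluded by this argument, but you have not shown that either occurs.

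Since the statement is an equality, you need an explicit witness. The paper gives $S=\mathfrak p e\oplus\mathfrak p^{-1}f\subset J$ with $h_{11}(e,e)=4$, $h_{11}(f,f)=120$ and $h_{11}(e,f)=-21+\sqrt{-23}$. Then $d_S=4\cdot120-\Nm_{F/\Q}(-21+\sqrt{-23})=480-464=16$. Its rank-$1$ piece $\mathfrak p e$ has discriminant $2\cdot4=8$, not $6$, which is consistent with the obstruction above.
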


\begin{remark}
The proofs of Propositions~\ref{prop:d_2(J)},~\ref{prop:d_3(J)}, and~\ref{prop:d_4(J)} use the same computational approach, based on the proof of~\cite[Proposition~5.5(c)]{CoulangeonNebe2013}.
\end{remark}

\begin{proof}
Suppose that there exists an $\mathcal O_F$-sublattice $S\subset J$ of  rank $2$ with $d_S\le15$.
By Proposition~\ref{prop:hermitian_constant}, $\gamma_{2,F}^{(c)}=\sqrt{23/5}$ for every Steinitz class $c$, so $\mu^p(S)\le\sqrt{23/5}\sqrt{15}=\sqrt{69}$. Hence there exists an  $\mathcal O_F$-sublattice $P_0\subset S$ of  rank $1$ with $d_{P_0}\le8$.
If $P_0\ne P_0^{\mathrm{sat}}$, then $d_{P_0}\ge2 d_{P_0^{\mathrm{sat}}}\ge12$, a contradiction. Thus $P_0$ is saturated.
There are exactly $13$ $\Aut_{\mathcal O_F}(J,h_{11})$-orbits of such $P_0$.
Since $P_0$ is saturated, the quotient $S/P_0$ is projective. Thus, if $P_0=\mathfrak a e$, we may write
$S=\mathfrak a e\oplus\mathfrak b f$, where
$\mathfrak b\in\{\mathcal O_F,\mathfrak p,\bar{\mathfrak p}\}$ and
$f\in\mathfrak b^{-1}J$.
For each representative $P_0=\mathfrak a e$ of the $13$ orbits, put
$D_e(f):=h_{11}(e,e)h_{11}(f,f)-\Nm_{F/\Q}(h_{11}(e,f))$. Then
\[
d_S = \operatorname{N}_{F/\Q}(\mathfrak a)\operatorname{N}_{F/\Q}(\mathfrak b)D_e(f).
\]
We examine all three choices of $\mathfrak b$, using the induced
positive definite form on the quotient by $Fe$.
Exact enumeration finds no $f$ satisfying
$0<\operatorname{N}_{F/\Q}(\mathfrak a)\operatorname{N}_{F/\Q}(\mathfrak b)D_e(f)\le 15$.
Hence $d_2^{\mathrm{proj}}(J)\ge16$.
Conversely, there exists an $\mathcal{O}_F$-sublattice $S=\mathfrak p e\oplus\mathfrak p^{-1}f\subset J$ such that
\[
h_{11}(e,e)=4,
\qquad
h_{11}(f,f)=120,
\qquad
h_{11}(e,f)=-21+\sqrt{-23}.
\]
For this sublattice, $d_S=4\cdot120-464=16$.
Therefore, $d_2^{\mathrm{proj}}(J) = 16$.
\end{proof}

\subsection{\texorpdfstring{$d_3^{\mathrm{proj}}(J) = 27$}{d3 projective(J) = 27}}
\label{sec:d3-computation}

\begin{proposition}\label{prop:d_3(J)}
We have $d_3^{\mathrm{proj}}(J) =27$.
\end{proposition}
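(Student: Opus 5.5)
We follow the same reduction strategy as in the proof of Proposition~\ref{prop:d_2(J)}: reduce a hypothetical small rank $3$ sublattice to a rank $2$ sublattice of small discriminant, and then finish by an exact enumeration. Suppose there is an $\mathcal O_F$-sublattice $M\subset J$ of rank $3$ with $d_M\le 26$. By saturation we may assume $M$ is saturated, since saturating only decreases $d_M$.

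The key step is to use the rank $3$ Hermitian constants from Proposition~\ref{prop:hermitian_constant} to bound a rank $1$ piece of $M$. If the Steinitz class of $M$ is trivial, we get
\[
\mu^p(M)\le (30613/1029)^{1/3}\, 26^{1/3} = (795938/1029)^{1/3}<9.2 .
\]
If the class is nontrivial, we get $\mu^p(M)\le (46\cdot 26)^{1/3}<10.7$. Either way, $M$ contains a rank $1$ sublattice $P_0$ with $d_{P_0}\le 10$. As before, a non-saturated $P_0$ has $d_{P_0}\ge 2\cdot 6=12$, so $P_0$ is saturated in $J$, hence in $M$. The tables in \S\ref{sec:autgroup} give the possible $P_0$ up to $\Aut_{\mathcal O_F}(J,h_{11})$. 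For $d_{P_0}\le 10$ there are $3+11+9+33=56$ orbits, and the involution $P\mapsto P^\dagger$ can be used to cut the work further where convenient.

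Rather than attaching two vectors at once, we project. Fix a representative $P_0=\mathfrak a e$ and let $\pi$ be the orthogonal projection onto $(Fe)^\perp$. Then $M/P_0$ is projective of rank $2$, and $\pi(M)\subset \pi(J)$ is a rank $2$ sublattice with $d_M=d_{P_0}\,d_{\pi(M)}$. This gives $d_{\pi(M)}\le 26/d_{P_0}$, which is at most $26/6<4.4$. Applying $\gamma_{2,F}^{(c)}=\sqrt{23/5}$ to $\pi(M)$ produces a rank $1$ sublattice $\mathfrak b \pi(f)\subset \pi(J)$ with
\[
\Nm_{F/\Q}(\mathfrak b)\, h(\pi f,\pi f)\le \sqrt{23/5}\,\sqrt{26/d_{P_0}} .
\]
This is a small bound, so for each of the $56$ representatives and each $\mathfrak b\in\{\mathcal O_F,\mathfrak p,\bar{\mathfrak p}\}$ we enumerate the short vectors of the projected lattice $\mathfrak b^{-1}\pi(J)$. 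Each such vector we lift, modulo $\mathfrak a^{-1}\mathfrak b e$, to $f\in\mathfrak b^{-1}J$, and we form $S=\mathfrak a e\oplus\mathfrak b f$. We keep only those $S$ with $d_S\le 26$. Proposition~\ref{prop:d_2(J)} gives $d_S\ge16$, which further prunes the list.

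For each surviving $S$ we repeat the step once more, projecting onto $(FS)^\perp$ and requiring $\Nm(\mathfrak c)\,h(\pi_S g,\pi_S g)\le 26/d_S\le 26/16$ for the third generator $g$. An exact Fincke--Pohst enumeration over the three Steinitz choices of $\mathfrak c$ should find no $g$ giving $0<d_M\le 26$. This yields $d_3^{\mathrm{proj}}(J)\ge 27$. For the upper bound we exhibit an explicit rank $3$ sublattice of discriminant $27$. The natural place to look is an extension of the sublattice $S$ of discriminant $16$ from the proof of Proposition~\ref{prop:d_2(J)}, or one built around the $d=6$ vectors, found during the same search.

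The main obstacle is the size of the intermediate enumeration. The number of rank $2$ sublattices $S$ with $16\le d_S\le 26$ containing a short $P_0$ could be large, and the lifts $f$ are determined only modulo $P_0$, so the bookkeeping has to be done exactly. I would control this by working throughout in the projected lattices, which have small rank and small norm bounds. I would also use the stabilizer of $P_0$ in $\Aut_{\mathcal O_F}(J,h_{11})$ to reduce the candidates for $\pi(f)$ to orbits before lifting.
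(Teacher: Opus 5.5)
Your skeleton matches the paper's proof. You use $\gamma_3^{(c)}$ to get a saturated $P_0\subset M$ with $d_{P_0}\le 10$, then $\gamma_2^{(c)}$ on the projection to get a rank $2$ sublattice $P_0\subset S\subset M$, then enumerate the third generator in the projection onto $(FS)^\perp$. The gap is the pruning step ``we keep only those $S$ with $d_S\le 26$''. Discriminants are not monotone when you pass to a sublattice of smaller rank. We have $d_M=d_S\cdot\Nm_{F/\Q}(\mathfrak c)\,h(\pi_S g,\pi_S g)$, and the projected lattice $\pi_S(J)$ has discriminant $d_J/d_S=1/d_S$, so the last factor can be smaller than $1$. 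Your own inequalities only give
\[
d_S=d_{P_0}\,\Nm_{F/\Q}(\mathfrak b)\,h(\pi f,\pi f)\le\sqrt{598\,d_{P_0}/5},
\]
that is, $d_S\le 26,30,32,34$ for $d_{P_0}=6,8,9,10$. Nothing you use excludes, say, $d_{P_0}=10$, $\Nm_{F/\Q}(\mathfrak b)h(\pi f,\pi f)=3$ and a third projected norm $0.8$, which gives $d_M=24$ but $d_S=30$. Discarding the $S$ with $27\le d_S\le 34$ therefore makes the search non-exhaustive. The paper keeps every saturated $R\supset P_0$ with $d_R\le\lfloor\sqrt{598\,d_{P_0}/5}\rfloor$ ($978$ of them). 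It then enumerates third generators with $d_R\,\Nm_{F/\Q}(\mathfrak c)\,h_{Q_R}(e,e)\le 26$, a projected-norm bound below $1$ whenever $d_R\ge 27$. With that change your procedure becomes the paper's.

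Second, the upper bound is not proved: you only indicate where a witness might be found. The paper exhibits a free rank $3$ sublattice of $J$ with Gram matrix
\[
3\begin{pmatrix}2&-1&-\alpha\\-1&3&1\\-\bar\alpha&1&4\end{pmatrix},
\]
whose discriminant is $27\bigl(18+\Tr_{F/\Q}(\alpha)-3\Nm_{F/\Q}(\alpha)\bigr)=27$. There are also smaller slips. The number of orbits with $d_{P_0}\le 10$ is $3+10+9+33=55$, since the $d=8$ row sums to $10$. The lift $f$ is determined modulo $\mathfrak a\mathfrak b^{-1}e$, not $\mathfrak a^{-1}\mathfrak b e$. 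Finally, $d_S\ge 16$ prunes nothing; it only bounds the final enumeration.
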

\begin{proof}
The lattice $J$ contains a free $\mathcal O_F$-sublattice of rank $3$  with Gram matrix
\[
3\begin{pmatrix}
2&-1&-\alpha\\
-1&3&1\\
-\bar\alpha&1&4
\end{pmatrix}.
\]
Hence $d_3^{\mathrm{proj}}(J)  \le27$.
We verify the projective lower bound computationally.
Suppose that a saturated $\mathcal O_F$-sublattice $S\subset J$  of rank $3$ satisfies
$d_S\le26$. By Proposition~\ref{prop:hermitian_constant},
\[
(\gamma_{3,F}^{(c)})^3\le46,
\qquad (\gamma_{2,F}^{(c)})^2=\frac{23}{5}
\]
for every Steinitz class $c$. Hence
\[
\mu^p(S)\le(46d_S)^{1/3}\le(46\cdot26)^{1/3}<11.
\]
Thus $S$ contains a rank $1$ sublattice $P_0$ of discriminant
$d_{P_0}\le10$.
Such a $P_0$ is saturated in $J$. The tables in \S\ref{sec:autgroup} give $55$ $\Aut_{\mathcal O_F}(J,h_{11})$-orbits of such $P_0$.

The quotient $S/P_0$, equipped with the orthogonal quotient form, has
rank $2$ and discriminant $d_S/d_{P_0}$.
Since $(\gamma_{2,F}^{(c)})^2=23/5$, there exists a saturated rank $2$ sublattice $R$ with $P_0\subset R\subset S$ such that
\[
d_R\le \left\lfloor\sqrt{\frac{23d_Sd_{P_0}}{5}}\right\rfloor
\le \left\lfloor\sqrt{\frac{598d_{P_0}}{5}}\right\rfloor.
\]
We therefore enumerate, for each possible $P_0$, all saturated rank $2$ sublattices $R$ satisfying this bound.
This yields $978$ distinct sublattices $R$.

For each such $R$, let $Q_R:=J/R$ be the projective $\mathcal O_F$-module of rank $9$. We endow $Q_R$ with the Hermitian form $h_{Q_R}$ induced by the orthogonal projection $F \otimes_{\mathcal{O}_F}J \to (F \otimes_{\mathcal{O}_F}R)^\perp$.
For each $\mathfrak a\in\{\mathcal O_F,\mathfrak p,\overline{\mathfrak p}\}$, a nonzero element $e\in\mathfrak a^{-1}Q_R$ determines a rank $1$ projective submodule $\mathfrak a e\subset Q_R$, and its inverse image in $J$ is a rank $3$ sublattice $S$ containing $R$. Its discriminant is $d_S=d_R\operatorname{N}_{F/\mathbb Q}(\mathfrak a)h_{Q_R}(e,e)$. Thus, for each $R$ and each $\mathfrak a\in\{\mathcal O_F,\mathfrak p,\overline{\mathfrak p}\}$, it suffices to enumerate the elements $e\in\mathfrak a^{-1}Q_R$ satisfying $0 < d_R\operatorname{N}_{F/\mathbb Q}(\mathfrak a)h_{Q_R}(e,e)\le26$.
None of these enumerations produces a saturated $\mathcal{O}_F$-sublattice $S$ of rank $3$ with $d_S\le26$. Therefore $d_3^{\mathrm{proj}}(J)\ge27$.
\end{proof}

\subsection{\texorpdfstring{$d_4^{\mathrm{proj}}(J)= 48$}{d4 projective(J) = 48}}
\begin{proposition}\label{prop:d_4(J)}
We have $d_4^{\mathrm{proj}}(J)=48$.
\end{proposition}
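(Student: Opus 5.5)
The plan follows the template of Propositions~\ref{prop:d_2(J)} and~\ref{prop:d_3(J)}: an explicit upper bound, then a computer-assisted lower bound that reduces to enumerating short vectors in quotient lattices of small rank.

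\textbf{Upper bound.} Exhibit an explicit rank $4$ sublattice of $J$ with discriminant $48$. The natural place to look is a sublattice spanned by minimal-type vectors, for instance one containing the rank $3$ free sublattice of discriminant $27$ from the proof of Proposition~\ref{prop:d_3(J)}. We extend it by a vector $f$ whose orthogonal projection has norm $16/9$; alternatively, a direct sum of two rank $2$ pieces could work. Certifying $d_M=48$ is then a single Gram determinant computation. This gives $d_4^{\mathrm{proj}}(J)\le 48$.

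\textbf{Lower bound.} Suppose a saturated rank $4$ sublattice $S\subset J$ has $d_S\le 47$. We peel it down in two layers.

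\begin{enumerate}
\item Since $d_3^{\mathrm{proj}}(J)=27$, it suffices to find a rank $2$ saturated $R\subset S$ with small discriminant. The projective Hermitian constant in rank $4$ is not listed in Proposition~\ref{prop:hermitian_constant}. I would therefore bound $\mu^p(S)$ via Mordell-type inequalities, or a crude bound such as $\gamma_{4,F}\le \gamma_{8}^{\mathbb Z}$-derived estimates on the trace lattice: $d_S\le 47$ means the trace form of $S$ has determinant $23^4d_S^2$ in rank $8$. This yields a rank $1$ saturated $P_0\subset S$ with $d_{P_0}\le N_1$ for an explicit $N_1$, hopefully at most $12$ or so, so that the orbit tables in \S\ref{sec:autgroup} apply.
\item Pass to $S/P_0$, of rank $3$ and discriminant $d_S/d_{P_0}$. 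Using $(\gamma^{(c)}_{3,F})^3\le 46$ from Proposition~\ref{prop:hermitian_constant}, obtain $R\supset P_0$ of rank $2$ with $d_R\le \lfloor (46\,(d_S/d_{P_0})\,)^{1/3} d_{P_0}\rfloor$, or better, the analogous bound with the rank $2$ constant $23/5$ applied on the dual side.
\end{enumerate}

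Enumerate all such $R$ up to $\Aut_{\mathcal O_F}(J,h_{11})$, exactly as in the proof of Proposition~\ref{prop:d_3(J)}. For each $R$, form the rank $9$ quotient $Q_R=J/R$ with the projected form. A rank $4$ $S\supset R$ corresponds to a rank $2$ projective sublattice $S/R\subset Q_R$ with $d_{S/R}=d_S/d_R\le 47/d_R$.

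Enumerating rank $2$ sublattices of $Q_R$ is again done in two layers. First take vectors $e\in\mathfrak a^{-1}Q_R$ with $\operatorname{N}(\mathfrak a)h_{Q_R}(e,e)\le \sqrt{23/5}\,\sqrt{47/d_R}$, over the three ideal classes. For each such $e$, take vectors in $\mathfrak b^{-1}(Q_R/\langle e\rangle)$ satisfying the residual bound. Finally check that none gives $d_S\le47$.

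\textbf{Main obstacle.} The hard step is controlling the search size. Without a known $\gamma_{4,F}$, the initial bound on $d_{P_0}$ may be too weak, and the number of pairs $(P_0,R)$ may explode. What I would try first is to avoid $\gamma_4$ entirely by a duality trick in the spirit of the $d_3^{\mathrm{proj}}(E)$ argument. Choose the filtration so that $S$ contains a rank $3$ saturated $T$ with $d_T\le (\gamma_{3}^{(c)})\,d_S^{3/4}$-type bound; this follows from applying $\gamma_{1}$ to the dual $S^\#$, since rank $1$ quotients of $S$ correspond to rank $3$ sublattices. Then use the already-finished rank $3$ enumeration from Proposition~\ref{prop:d_3(J)} extended to discriminant up to that bound, and add one vector via the rank $8$ quotient $J/T$. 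Symmetry under $\Aut_{\mathcal O_F}(J,h_{11})$ and the involution $P\mapsto P^\dagger$ cuts the work further.
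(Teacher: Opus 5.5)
Your main line is essentially the paper's proof. The paper applies $\gamma_8=2$ to the rank-$8$ trace form of $S$, whose determinant is $23^4d_S^2$, and obtains $0\ne x\in S$ with $h_{11}(x,x)\le\sqrt{23}\,d_S^{1/4}\le\sqrt{23}\cdot 47^{1/4}<13$. So your $N_1$ is exactly $12$, which lies inside the orbit tables, and the paper then runs the layered enumeration of Proposition~\ref{prop:d_3(J)} as you describe. The upper bound in the paper is an explicit free rank-$4$ sublattice with Gram matrix of determinant $48$. Your proposal leaves this witness unspecified: extending the discriminant-$27$ lattice by a vector of projected norm $16/9$ is only a guess.

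Consequently your ``main obstacle'' does not arise, and you should discard the duality fallback, because its justification is wrong. Finding a short rank-$1$ sublattice of the rank-$4$ lattice $S^\#$ again requires a rank-$4$ bound; it cannot come from $\gamma_1$ or $\gamma_3$. Even with the trace-form estimate it only gives $d_T\le\sqrt{23}\,d_S^{3/4}\approx 86$. That would require a rank-$3$ enumeration far beyond the bound $26$ handled in Proposition~\ref{prop:d_3(J)}.
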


\begin{proof}
For a rank-$4$ sublattice $S\subset J$ with $d_S\le47$, the Hermitian bound $\gamma_8=2$ gives a nonzero vector $x\in S$ with $h_{11}(x,x)\le\sqrt{23}\,d_S^{1/4}<13$.
Using the same method as in Proposition~\ref{prop:d_3(J)}, we find no such \(S\). Hence \(d_4^{\mathrm{proj}}(J)\ge48\). 
On the other hand, $J$ contains a free $\mathcal O_F$-sublattice of rank $4$ with Gram matrix
\[
2\begin{pmatrix}
3&-1&\alpha-1&-1-\alpha\\
-1&3&0&1\\
-\alpha&0&4&-1\\
\alpha-2&1&-1&5
\end{pmatrix},
\]
whose discriminant is $48$. Therefore $d_4^{\mathrm{proj}}(J)=48$.
\end{proof}

\section{\texorpdfstring{Proof of Theorem~\ref{thm:main}}{Proof of the minimum theorem}}

\subsection{\texorpdfstring{The case of tensor rank at most $3$}{The case of tensor rank at most 3}}

\begin{proposition}\label{prop:tensor-rank-3}
If $0\ne z\in L=E\otimes_{\mathcal O_F}J$ has tensor rank at most $3$, then $h(z,z)\ge4$.
\end{proposition}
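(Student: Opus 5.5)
Apply Proposition~\ref{prop:inequality-tensor} to $(L,h)=(E,h_4)\otimes_{\mathcal O_F}(J,h_{11})$, one case for each value of the tensor rank $s\in\{1,2,3\}$ of $z$. In each case the bound takes the form
\[
h(z,z)\ge s\bigl(d_s^{\mathrm{proj}}(E)\,d_s^{\mathrm{proj}}(J)\bigr)^{1/s},
\]
and we substitute the values of $d_s^{\mathrm{proj}}(E)$ from Proposition~\ref{prop:d_s(E)} and of $d_s^{\mathrm{proj}}(J)$ from Propositions~\ref{prop:d_1(J)}, \ref{prop:d_2(J)} and \ref{prop:d_3(J)}.

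For $s=1$, we have $d_1^{\mathrm{proj}}(E)=1$ and $d_1^{\mathrm{proj}}(J)=6$, so $h(z,z)\ge 6\ge 4$.

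For $s=2$, we have $d_2^{\mathrm{proj}}(E)=5/23$ and $d_2^{\mathrm{proj}}(J)=16$. This gives
\[
h(z,z)\ge 2\sqrt{80/23}\approx 3.73.
\]
This is below $4$. However, Proposition~\ref{prop:tensor-decomposition}(i) shows that $h(z,z)\in\mathbb Z_{\ge1}$, and $3.73>3$, so we conclude $h(z,z)\ge4$.

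For $s=3$, we have $d_3^{\mathrm{proj}}(E)=1/23$ and $d_3^{\mathrm{proj}}(J)=27$. This gives
\[
h(z,z)\ge 3\,(27/23)^{1/3}=9/23^{1/3}\approx 3.16.
\]
Again this exceeds $3$, and integrality of $h(z,z)$ yields $h(z,z)\ge4$.

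All three cases are elementary numerical checks once the earlier results are in hand, so there is no real obstacle. The only points needing care are (a) that Proposition~\ref{prop:inequality-tensor} was proved for possibly nonfree sublattices, which matches the projective definition of $d_s^{\mathrm{proj}}$ used for $E$ and $J$, and (b) the integrality of $h(z,z)$ from Proposition~\ref{prop:tensor-decomposition}(i). Point (b) is essential: in the cases $s=2$ and $s=3$ the analytic bound alone falls short of $4$, and it is integrality that closes the gap.
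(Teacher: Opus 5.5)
Your proposal is correct and follows essentially the same route as the paper. The paper applies Proposition~\ref{prop:inequality-tensor} with the computed values of $d_s^{\mathrm{proj}}(E)$ and $d_s^{\mathrm{proj}}(J)$ to get $h(z,z)>3$, and then uses integrality of $h(z,z)$ to conclude $h(z,z)\ge4$; your explicit case-by-case numerics ($6$, $2\sqrt{80/23}\approx3.73$, $9/23^{1/3}\approx3.16$) are accurate.
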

\begin{proof}
Let $1\le s\le3$ be the tensor rank of $z$. By Proposition~\ref{prop:inequality-tensor},
\[
h(z,z)\ge s\left(d_s^{\mathrm{proj}}(E)d_s^{\mathrm{proj}}(J)\right)^{1/s}.
\]
Propositions~\ref{prop:d_s(E)}, \ref{prop:d_1(J)}, \ref{prop:d_2(J)}, and~\ref{prop:d_3(J)} then imply that $h(z,z)>3$.
Since $h(z,z)\in\mathbb Z$, we obtain $h(z,z)\ge4$.
\end{proof}

\subsection{\texorpdfstring{The case of tensor rank $4$}{The case of tensor rank 4}}

\begin{proposition}\label{prop:tensor-rank-4-case1}
If $z\in L$ has tensor rank $4$, then $h(z,z)\ge3$.
\end{proposition}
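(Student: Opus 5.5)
Apply Proposition~\ref{prop:inequality-tensor} with $s=4$, exactly as in the proof of Proposition~\ref{prop:tensor-rank-3}. If $z\in L=E\otimes_{\mathcal O_F}J$ has tensor rank $4$, it gives
\[
h(z,z)\ge 4\left(d_4^{\mathrm{proj}}(E)\,d_4^{\mathrm{proj}}(J)\right)^{1/4}.
\]
Proposition~\ref{prop:d_s(E)} gives $d_4^{\mathrm{proj}}(E)=1/529$, and Proposition~\ref{prop:d_4(J)} gives $d_4^{\mathrm{proj}}(J)=48$. The right-hand side is therefore
\[
4\left(\frac{48}{529}\right)^{1/4}.
\]

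Next we evaluate this numerically. We have $48/529\approx 0.0907$. Its square root is $\sqrt{48}/23\approx 6.928/23\approx 0.3012$, and the square root of that is about $0.5488$. Hence the bound is approximately $2.195$, which exceeds $2$.

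Now we use integrality. By Proposition~\ref{prop:tensor-decomposition}(i), $h(z,z)\in\mathbb Z_{\ge1}$. Since $h(z,z)>2$, it follows that $h(z,z)\ge3$, as claimed.

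Since $4$ is the full rank of $E$, the subspace $U_1$ in the proof of Proposition~\ref{prop:inequality-tensor} is all of $F\otimes E$. Consequently $M_1=E$ and the bound $d_{M_1}\ge d_4^{\mathrm{proj}}(E)=d_E$ is sharp. No extra saturation issue arises on the $E$ side.

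The main obstacle is not this argument but the input $d_4^{\mathrm{proj}}(J)=48$. The crude estimate only yields $h(z,z)\ge 3$, not $4$, so the paper must handle separately the possibility $h(z,z)=3$ for tensor rank $4$ (and presumably ranks above $4$). That requires a finer analysis beyond this proposition, for instance a refined AM--GM equality analysis or an enumeration of sublattices with small discriminant.
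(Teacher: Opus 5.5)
Your proof is correct and is exactly the paper's argument: Proposition~\ref{prop:inequality-tensor} with $s=4$, combined with $d_4^{\mathrm{proj}}(E)=1/529$ and $d_4^{\mathrm{proj}}(J)=48$, gives $h(z,z)\ge 4(48/529)^{1/4}>2$, and integrality of $h(z,z)$ then gives $h(z,z)\ge 3$. One small aside on your closing commentary: there are no tensor ranks above $4$ to worry about, because $E$ is free of rank $4$, so only the case $h(z,z)=3$ at tensor rank $4$ needs separate treatment (Proposition~\ref{prop:tensor-rank-4-case2}).
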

\begin{proof}
Proposition~\ref{prop:inequality-tensor}, together with Propositions~\ref{prop:d_s(E)} and~\ref{prop:d_4(J)},  gives $h(z,z)\ge4(48/529)^{1/4}>2$.
Since $h(z,z)\in\mathbb Z$, we obtain $h(z,z)\ge3$.
\end{proof}

\begin{proposition}\label{prop:tensor-rank-4-case2}
If $z\in L$ has tensor rank $4$, then $h(z,z)\ne3$.
\end{proposition}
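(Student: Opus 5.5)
Suppose $z\in L$ has tensor rank $4$ and $h(z,z)=3$. We follow the proof of Proposition~\ref{prop:inequality-tensor} and make every inequality in it nearly tight. Let $U_1=E\otimes F$ (of dimension $4$, so $M_1=E$) and let $U_2\subset J\otimes F$ be the $4$-dimensional subspace determined by $z$. Put $M_2:=J\cap U_2$, so that $z\in E\otimes_{\mathcal O_F}M_2$. The chain of inequalities in that proof gives
\[
3=h(z,z)\ \ge\ 4\bigl(|\det C|^2 \det H_1\det H_2\bigr)^{1/4}\ \ge\ 4\bigl(d_E\,[\,\cdot\,]\,d_{M_2}\bigr)^{1/4},
\]
where $[\,\cdot\,]$ denotes the index factor $|\det C|^2/(\mathrm N\mathfrak a\,\mathrm N\mathfrak b)\ge 1$. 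Hence
\[
d_{M_2}\cdot\frac{|\det C|^2}{\mathrm N(\mathfrak a\mathfrak b)}\ \le\ \frac{81}{256}\cdot 529<168 .
\]
In particular $M_2$ is a saturated rank $4$ sublattice of $J$ with $48\le d_{M_2}\le 167$. The first step is therefore a finite enumeration. Using the same method as in Propositions~\ref{prop:d_3(J)} and~\ref{prop:d_4(J)}, we list, up to $\Aut_{\mathcal O_F}(J,h_{11})$, all saturated rank $4$ sublattices $M\subset J$ with $d_M\le167$. We build them as chains $P_0\subset R\subset S_3\subset M$, bounding each step with the projective Hermitian constants of Proposition~\ref{prop:hermitian_constant}: the minimum of $M$ is at most $\sqrt{23}\,d_M^{1/4}$ by $\gamma_8=2$, and this bounds $d_{P_0}$ and the subsequent layers. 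The enumeration yields a finite list of candidates $M$.

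The second step treats each candidate $M$ separately. The vector $z$ then lies in the rank $16$ Hermitian lattice $E\otimes_{\mathcal O_F}M$, whose trace lattice has rank $32$, so the question reduces to a short-vector problem in dimension $32$. We need to show that this lattice has no vector $z$ of full tensor rank $4$ with $h(z,z)=3$, that is, with $B(z,z)=6$. For each $M$ we run an exact Fincke--Pohst enumeration of vectors of trace norm at most $6$ in $(E\otimes M,\operatorname{Tr}h)$. For every vector found we compute its tensor rank by taking the rank of its $4\times4$ coefficient matrix $C$ over $F$. Vectors of tensor rank at most $3$ have $h\ge4$ by Proposition~\ref{prop:tensor-rank-3}, so they cannot appear at norm $3$; the enumeration only needs to confirm that no rank-$4$ vector of norm $3$ occurs. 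We can prune further. Since $d_E=1/529$ is fixed and $|\det C|^2\ge\mathrm N(\mathfrak a\mathfrak b)$, the AM--GM step with equality analysis forces $d_{M_2}\le 167$, and for the larger values of $d_{M_2}$ it also constrains the eigenvalues of $Z\overline Z^{\mathsf T}$ to be close to equal.

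The main obstacle is the size of the enumeration in the first step. The count of saturated rank $4$ sublattices with discriminant up to $167$ may be large, well beyond the $978$ rank $2$ sublattices that appear in the $d_3$ computation. To control it, I would first exploit the automorphism group $C_2\times\PSL_2(23)$ and the semilinear involution $P\mapsto P^\dagger$ to reduce to orbit representatives. I would also try to sharpen the bound using the eigenvalue constraint: since $\operatorname{tr}(Z\overline Z^{\mathsf T})=3$ with determinant at least $d_{M_2}/(529\cdot\text{const})$, each eigenvalue is bounded. If the enumeration is still too big, I would switch to a dual approach. Because $E$ is unimodular up to the scaling $\sqrt{-23}$, we have $z=\sum_i p_i\otimes y_i$ with $y_i\in J$ (over the free basis $p_i$ of Remark~\ref{rem:E-Gram}), and $h(z,z)=\sum_{i,j}h_4(p_i,p_j)h_{11}(y_i,y_j)=3$. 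Positivity then bounds each $h_{11}(y_i,y_i)$ in terms of the Gram matrix of $E$, and one can enumerate short vectors of $J$ directly.
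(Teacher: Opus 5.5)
Your reduction is logically sound. The proof of Proposition~\ref{prop:inequality-tensor} does give $z\in E\otimes_{\mathcal O_F}M_2$ with $M_2\subset J$ saturated of rank $4$ and $48\le d_{M_2}\le167$. Your quantity $d_{M_2}|\det C|^2/\mathrm N(\mathfrak a\mathfrak b)$ is exactly the paper's $\det H_X$, so this is the same window $48\le\det H_X\le167$. Checking the minimum of $E\otimes M$ for every such $M$ would settle the claim. However, from $h(z,z)=3$ you keep only this determinant bound, and that is where you part ways with the paper.

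The paper completes the square in the Gram matrix of $E$. That matrix has the form $\begin{pmatrix}I&B\\ B^*&I\end{pmatrix}$ with $\sqrt{-23}\,B$ integral and $BB^*=\tfrac{22}{23}I$. So the integral triangular matrix $T$ gives $y=Tx$ with $y_i\in J$ and the exact identity $23h(z,z)=\sum_i q(y_i)$. Thus $H_Y=TH_X\overline{T}^{\mathsf T}$ is an $\mathcal O_F$-integral positive definite Hermitian matrix with trace exactly $69$, diagonal entries $\ge6$, and $529\cdot48\le\det H_Y\le(69/4)^4$. This confines $H_Y$ to a small finite set: $8159$ abstract Gram matrices after normalizing by $\Aut_{\mathcal O_F}(E,h_4)$, followed by isometric-embedding tests into $J$. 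The paper never enumerates sublattices of $J$ of discriminant up to $167$.

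Your route buys uniformity: the chain method of Propositions~\ref{prop:d_3(J)} and~\ref{prop:d_4(J)} plus a standard $32$-dimensional Fincke--Pohst search, with no embedding tests. But the first step is the whole difficulty, and you have not shown it is tractable. The minimum of $M$ only gives $d_{P_0}\le17$, which is about $6000$ orbits of starting lattices from the table in \S\ref{sec:autgroup}, versus $55$ for $d_3$. The next layer needs rank-$2$ lattices with $d_R$ up to $(46\cdot167)^{1/3}17^{2/3}\approx130$, versus $34$ in the proof of Proposition~\ref{prop:d_3(J)}. Since such counts grow roughly like $d^{11}$, this is plausibly several orders of magnitude beyond the paper's computations.

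Your fallback does not fix this as stated. Let $G_E$ be the Gram matrix of Remark~\ref{rem:E-Gram}; its inverse has all diagonal entries $23$. Positivity alone gives only $h_{11}(y_i,y_i)\le (G_E^{-1})_{ii}\,h(z,z)=23\cdot3=69$ for your coefficient vectors. Enumerating vectors of $J$ of Hermitian norm up to $69$, let alone quadruples of them, is hopeless. Even after the exact square completion, each $q(y_i)$ can be as large as $51$; this is why the paper enumerates Gram matrices of fixed trace rather than vectors. The missing ingredient for a feasible proof is precisely the identity $23h(z,z)=\sum_i q(y_i)$, together with the Gram-matrix enumeration and embedding tests.
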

\begin{proof}
We use the same notation as in Remark \ref{rem:E-Gram}.
Suppose that $z\in L = E\otimes_{\mathcal O_F}J$ has tensor rank $4$ and satisfies $h(z,z)=3$. Then one can write
\[
z
=
p_1\otimes x_1
+p_2\otimes x_2
+p_3\otimes x_3
+p_4\otimes x_4,
\qquad
x_1,x_2,x_3,x_4\in J.
\]
Since $z$ has tensor rank $4$, the vectors $x_1,x_2,x_3,x_4$ are linearly independent over $F$. Put
\[
\begin{aligned}
y_1 :=\sqrt{-23}\,x_1-4x_3+(\alpha-1)x_4, \qquad
y_2 :=\sqrt{-23}\,x_2+\alpha x_3-4x_4, \qquad
y_3 :=x_3, \qquad
y_4 :=x_4.
\end{aligned}
\]
Then $y_1,y_2,y_3,y_4\in J$ and
\[
23h(z,z)
=
q(y_1)+q(y_2)+q(y_3)+q(y_4), \qquad q(x) := h_{11}(x,x).
\]
Moreover, $y_1,y_2,y_3,y_4$ are linearly independent over $F$.
Let the change-of-basis matrix carrying ${}^{t}(x_1,x_2,x_3,x_4)$ to ${}^{t}(y_1,y_2,y_3,y_4)$ be
\[
T :=
\begin{pmatrix}
\sqrt{-23}&0&-4&\alpha-1\\
0&\sqrt{-23}&\alpha&-4\\
0&0&1&0\\
0&0&0&1
\end{pmatrix}.
\] Put
\[
H_X := \bigl(h_{11}(x_i,x_j)\bigr)_{1\le i,j\le4},
\qquad
H_Y := \bigl(h_{11}(y_i,y_j)\bigr)_{1\le i,j\le4}.
\]
Then $H_Y=TH_X\overline{T}^{\mathsf T}$.
It follows from $h(z,z)=3$ that
\[
\operatorname{tr}(H_Y) = q(y_1)+q(y_2)+q(y_3)+q(y_4) = 69.
\]
By Proposition~\ref{prop:d_1(J)}, we have $d_1^{\mathrm{proj}}(J)=6$, and hence $q(y_i)\ge6$. Thus the diagonal entries of $H_Y$ satisfy
\[
q(y_i)\in\mathbb Z_{\ge6}, \qquad q(y_1)+q(y_2)+q(y_3)+q(y_4)=69,
\]
and there are only finitely many such tuples.
There are also only finitely many possibilities for the off-diagonal entries.
Indeed, since $H_Y$ is positive definite, every $2\times2$ principal minor is positive. Therefore
\[
\operatorname{N}_{F/\mathbb Q}\bigl(h_{11}(y_i,y_j)\bigr)
<
q(y_i)q(y_j)
\qquad (i\ne j).
\]
Since $q(y_i)q(y_j)$ takes only finitely many values, so does $\operatorname{N}_{F/\mathbb Q}(h_{11}(y_i,y_j))$.
Since $h_{11}(y_i,y_j)\in\mathcal O_F$ and $\mathcal{O}_F$ is a lattice in $\mathbb{C}$, the above bound leaves only finitely many possibilities for $h_{11}(y_i,y_j)$.

Thus there are only finitely many candidates for the positive definite Hermitian matrix $H_Y$, and all of them can be enumerated.
Moreover, the arithmetic--geometric mean inequality gives $\det H_Y\le(69/4)^4$, so from $H_Y=TH_X\overline{T}^{\mathsf T}$ and Proposition~\ref{prop:d_4(J)} we obtain $48\le\det H_X\le167$.
Finally, using elements of $\operatorname{Aut}_{\mathcal O_F}(E,h_4)$, we may arrange that $6\le q(y_4)\le q(y_3)$ and $12\le q(y_3)+q(y_4)\le34$.
The exact enumeration leaves $8159$ candidates. Details are provided in the computational supplement. 

For each candidate $H_X$, let $S$ be a free Hermitian $\mathcal O_F$-lattice with  Gram matrix $H_X$, and write $q(x)$ for its Hermitian norm.
Enumerate the vectors of Hermitian norm at most $12$ in $S$ and in $J$. For pairs $u,v\in S$ with $q(u),q(v)\le12$, test whether the sublattice $\mathcal O_Fu+\mathcal O_Fv$ admits an isometric embedding into $J$. These tests reduce the number of candidates to $1376$.
Identifying candidates that are equivalent under $\operatorname{Aut}_{\mathcal O_F}(E,h_4)$ leaves $362$ candidates.
For each of these $362$ candidates $S$ whose vectors of norm at most $12$ span $F\otimes_{\mathcal O_F}S$, choose four $F$-linearly independent such vectors $u_1,u_2,u_3,u_4$ and test whether $\sum_{i=1}^4\mathcal O_Fu_i$ admits an isometric embedding into $J$.
If such embeddings exist, test whether any of them extends to an isometric embedding $S\hookrightarrow J$.
Then only one candidate remains, for which $\dim_F\Span_F\{x\in S\mid q(x)\le12\}=2$.
For this final candidate, we may choose an $\mathcal O_F$-basis $u_1,u_2,u_3,u_4$ of $S$ such that $(q(u_1),q(u_2),q(u_3),q(u_4))=(12,14,13,13)$. A further  computation shows that the Hermitian lattice $S=\sum_{i=1}^4\mathcal O_Fu_i$ does not admit an isometric embedding into $J$.
It follows that there is no element $z\in L$ of tensor rank $4$ satisfying $h(z,z)=3$.
The complete sources, exact input forms, embedding witnesses, independent verification, and measured timings are included in the computational supplement.
\end{proof}

\subsection{\texorpdfstring{Proof of Theorem~\ref{thm:main}}{Completion of the minimum proof}}\label{sec:rank4-audit}

Since $L=E\otimes_{\mathcal O_F}J$ and $E$ is a free $\mathcal O_F$-module of rank $4$, every $0\ne z\in L$ has tensor rank at most $4$.
Therefore Propositions~\ref{prop:tensor-rank-3},~\ref{prop:tensor-rank-4-case1}, and~\ref{prop:tensor-rank-4-case2} imply that $h(z,z)\ge4$.

\subsection{\texorpdfstring{A second extremal lattice of rank $88$}{A second extremal lattice of rank 88}}
\label{sec:J0-extremal}

We now describe a second extremal lattice obtained from the principal ideal
class of $\Q(\zeta_{23})$.
Put 
\[
J_0:=\bar{\mathfrak p}J\subset\mathcal O_{\mathbb{Q}(\zeta_{23})},
\]
where $\mathfrak p=(2,\alpha) \subset \mathbb{Z}[\alpha]$.
Note that $J_0\bar J_0 = 2a\mathcal{O}_{\mathbb{Q}(\zeta_{23})}$.
We therefore equip $J_0$ with the Hermitian form
\[
h_{11}^{(0)}(u,v)
:=
\operatorname{Tr}_{\Q(\zeta_{23})/F}
\left((2\delta_{11}a)^{-1}u\bar v\right).
\]
Thus, on $J_0\subset J$, one has $2h_{11}^{(0)} = h_{11}$.
The same argument as in Proposition~\ref{prop:h11-properties} shows that
$(J_0,h_{11}^{(0)})$ is a self-dual Hermitian lattice over $\mathcal O_F$.
Moreover, $J_0$ belongs to the principal ideal class of $\mathbb{Q}(\zeta_{23})$ since $J_0$ is generated by $1+\zeta_{23}+\zeta_{23}^4+\zeta_{23}^6+
\zeta_{23}^8+\zeta_{23}^9+\zeta_{23}^{12}+\zeta_{23}^{20}$.

\begin{proposition}\label{prop:dproj-J0}
For every $1\le s\le11$, one has  $d_s^{\mathrm{proj}}(J_0) = d_s^{\mathrm{proj}}(J)$.
\end{proposition}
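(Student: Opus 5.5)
The plan is to show that multiplication by the ideal $\bar{\mathfrak p}$ gives a discriminant-preserving bijection between the $\mathcal O_F$-sublattices of rank $s$ of $(J,h_{11})$ and those of $(J_0,h_{11}^{(0)})$. Taking the minimum on both sides then yields $d_s^{\mathrm{proj}}(J_0)=d_s^{\mathrm{proj}}(J)$ for every $s$. No computation is needed.

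\emph{The bijection.} For an $\mathcal O_F$-sublattice $M\subset J$ of rank $s$, put $\Phi(M):=\bar{\mathfrak p}M$. Since $J_0=\bar{\mathfrak p}J$, this is an $\mathcal O_F$-sublattice of $J_0$, again of rank $s$, because it spans the same $F$-subspace. The ideal $\bar{\mathfrak p}$ is invertible, and $\bar{\mathfrak p}^{-1}J_0=J$. Hence every sublattice $N\subset J_0$ of rank $s$ satisfies $\bar{\mathfrak p}^{-1}N\subset J$ and $N=\Phi(\bar{\mathfrak p}^{-1}N)$. Thus $\Phi$ is a bijection with inverse $N\mapsto\bar{\mathfrak p}^{-1}N$.

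\emph{Discriminants.} Write $M=\bigoplus_{i=1}^s\mathfrak a_ie_i$. Then $\Phi(M)=\bigoplus_{i=1}^s(\bar{\mathfrak p}\mathfrak a_i)e_i$ is a pseudo-basis with the same vectors $e_i$. Each ideal norm gets multiplied by $\Nm_{F/\Q}(\bar{\mathfrak p})=2$, so the product of ideal norms gains a factor $2^s$. On the other hand, $h_{11}^{(0)}=\tfrac12h_{11}$ on $F\otimes_{\mathcal O_F}J_0=F\otimes_{\mathcal O_F}J$. Hence the Gram determinant $\det\bigl(h_{11}^{(0)}(e_i,e_j)\bigr)$ equals $2^{-s}\det\bigl(h_{11}(e_i,e_j)\bigr)$. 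The two factors cancel, and the discriminant of $\Phi(M)$ with respect to $h_{11}^{(0)}$ equals $d_M$ with respect to $h_{11}$.

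\emph{Conclusion.} Minimizing over all $M$ of rank $s$ gives the claimed equality for each $1\le s\le11$. The only points needing care are the following.
\begin{itemize}
\item The discriminant is independent of the chosen pseudo-basis. This is standard: a change of pseudo-basis multiplies the Gram determinant by $|\det|^2$, which is compensated by the change in the product of ideal norms.
\item The identity $2h_{11}^{(0)}=h_{11}$ holds on all of $F\otimes_{\mathcal O_F}J$, not only on $J_0$. This is immediate from the definitions, since both forms are $\Tr_{\Q(\zeta_{23})/F}$ of multiples of $u\bar v$.
\end{itemize}
I do not expect a genuine obstacle here.
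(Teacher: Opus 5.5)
Your proposal is correct and matches the paper's proof: multiplication by $\bar{\mathfrak p}$ is a bijection between rank-$s$ sublattices of $J$ and of $J_0$, with inverse given by $\bar{\mathfrak p}^{-1}$. Under it, the factor $2^s$ from the ideal norms cancels the factor $2^{-s}$ coming from $2h_{11}^{(0)}=h_{11}$ in the Gram determinant, so discriminants are preserved.
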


\begin{proof}
Let $P=\mathfrak a_1e_1\oplus\cdots\oplus\mathfrak a_se_s \subset J$
be a projective $\mathcal O_F$-sublattice of rank $s$.
Then  $\bar{\mathfrak p}P =
(\bar{\mathfrak p}\mathfrak a_1)e_1
\oplus\cdots\oplus
(\bar{\mathfrak p}\mathfrak a_s)e_s
\subset J_0$.
Since $\operatorname{N}_{F/\Q}(\bar{\mathfrak p})=2$ and
$2h_{11}^{(0)} = h_{11}$, we obtain
\begin{align*}
d_{\bar{\mathfrak p}P}
&=
\left(
\prod_{i=1}^s
\operatorname{N}_{F/\Q}(\bar{\mathfrak p}\mathfrak a_i)
\right)
\det\left(
h_{11}^{(0)}(e_i,e_j)
\right)_{i,j}
\\
&=
2^s
\left(
\prod_{i=1}^s
\operatorname{N}_{F/\Q}(\mathfrak a_i)
\right)
2^{-s}
\det\left(
h_{11}(e_i,e_j)
\right)_{i,j}
\\
&=
d_P.
\end{align*}
The correspondence $P\mapsto\bar{\mathfrak p}P$ is a bijection between projective rank-$s$ sublattices of $J$ and those
of $J_0$, with inverse $P_0\mapsto\bar{\mathfrak p}^{-1}P_0$.
The assertion follows.
\end{proof}

Define
\[
(L_0,h^{(0)})
:=
(E,h_4)\otimes_{\mathcal O_F}(J_0,h_{11}^{(0)}),
\qquad
B_0(x,y):=
\operatorname{Tr}_{F/\Q}h^{(0)}(x,y).
\]
By exactly the same argument as in Proposition~\ref{prop:tensor-decomposition},
$(L_0,B_0)$ is a positive definite even unimodular lattice of rank $88$.
Moreover, $(L_0,B_0)$ is not isomorphic to $(L,B)$ as a $\mathbb Z$-lattice.
Indeed, Theorem~\ref{grp:thm:fullgroups} in Appendix~\ref{app:fullgroups} gives
\begin{align*}
\Aut_{\mathbb Z}(L,B)
&\cong
\Aut_{\mathcal O_F}(E,h_4)\times\PSL_2(23),
\\
\Aut_{\mathbb Z}(L_0,B_0)
&\cong
\bigl(\Aut_{\mathcal O_F}(E,h_4)\times\PSL_2(23)\bigr)\rtimes C_2.
\end{align*}

\begin{proposition}\label{prop:L0-extremal}
The positive definite even unimodular lattice $(L_0,B_0)$ is extremal.
\end{proposition}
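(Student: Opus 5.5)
We follow the proof of Theorem~\ref{thm:main} step by step, with $(J,h_{11})$ replaced by $(J_0,h_{11}^{(0)})$. Since $B_0(z,z)=2h^{(0)}(z,z)$ and $2\lfloor 88/24\rfloor+2=8$, it suffices to show that $h^{(0)}(z,z)\ge4$ for every $0\ne z\in L_0$.

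First we check integrality. $h_{11}^{(0)}(J_0,J_0)\subset\mathcal O_F$, because $(2\delta_{11}a)^{-1}J_0\bar J_0=\delta_{11}^{-1}\mathcal O_{\Q(\zeta_{23})}$. Exactly as in Proposition~\ref{prop:tensor-decomposition}(i), this gives $h^{(0)}(L_0,L_0)\subset\mathfrak D_{F/\Q}^{-1}$, so $h^{(0)}(z,z)\in\Z_{\ge1}$. Since $E$ is free of rank $4$, every nonzero $z\in L_0$ has tensor rank $s\le4$. Proposition~\ref{prop:inequality-tensor} applies verbatim, and Proposition~\ref{prop:dproj-J0} shows that the numbers $d_s^{\mathrm{proj}}(J_0)$ equal $6,16,27,48$ for $s=1,\dots,4$. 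So the arguments of Propositions~\ref{prop:tensor-rank-3} and~\ref{prop:tensor-rank-4-case1} carry over unchanged. They give $h^{(0)}(z,z)\ge4$ when $s\le3$, and $h^{(0)}(z,z)\ge3$ when $s=4$.

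The remaining case is tensor rank $4$ with $h^{(0)}(z,z)=3$. Here we cannot simply quote Proposition~\ref{prop:tensor-rank-4-case2}, because that argument used the specific lattice $J$ in its embedding tests. We transport the question back to $J$ by writing
\[
z=\sum_{i=1}^4 p_i\otimes x_i \quad\text{with } x_i\in J_0,
\]
and forming $y_1,\dots,y_4$ as before. Then $23\cdot 3=\sum_i h_{11}^{(0)}(y_i,y_i)$. The Gram matrix $H_X^{(0)}=(h_{11}^{(0)}(x_i,x_j))$ is the Gram matrix of the free sublattice $S_0=\sum_i\mathcal O_F x_i\subset J_0$. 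Equivalently, $H_X^{(0)}$ is the Gram matrix of the free lattice $S_0$ viewed inside $J_0$, and the bijection $P\mapsto\bar{\mathfrak p}^{-1}P$ of Proposition~\ref{prop:dproj-J0} identifies $S_0$ with the non-free sublattice $\bar{\mathfrak p}^{-1}S_0$ of $J$, which has the same discriminant. The finite enumeration of candidate matrices $H_Y$ and $H_X$ is identical to the one in Proposition~\ref{prop:tensor-rank-4-case2}, since it only used $d_1^{\mathrm{proj}}=6$, $d_4^{\mathrm{proj}}=48$ and $\Aut_{\mathcal O_F}(E,h_4)$. This again leaves $8159$ candidates, but now the question is whether a free lattice with Gram matrix $H_X$ embeds isometrically into $(J_0,h_{11}^{(0)})$. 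Equivalently, the question is whether the lattice $\bar{\mathfrak p}^{-1}S$ with form $2\cdot(\text{Gram }H_X)$ embeds into $(J,h_{11})$.

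The main obstacle is this last embedding check, which must be done computationally. Rank-$1$ and rank-$2$ pieces of norm at most $12$ in $J_0$ correspond to the $[\bar{\mathfrak p}]$-twisted pieces in $J$, so the filtering used for $J$ does not transfer automatically. I would first try the same sieve: enumerate vectors of $h_{11}^{(0)}$-norm at most $12$ in $J_0$, test pairs spanning $\mathcal O_Fu+\mathcal O_Fv$, then test four independent short vectors, and finally test extension to all of $S$. A possible shortcut is the semilinear similitude $T$ of \S\ref{sec:autgroup}, which maps $J$ onto $\bar{\mathfrak p}^{-1}J$ and scales $h_{11}$ by $\tfrac12$ up to conjugation. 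If it sends $J$ to $J_0$ up to scaling, the rank-$4$ embedding question for $J_0$ would reduce to the conjugate of the $J$ question. However, $T$ changes the Steinitz class from $[\mathcal O_F]$ to $[\bar{\mathfrak p}]$ rather than to $[\mathfrak p]$, so this probably fails. In that case we rely on a direct exact enumeration showing that no candidate embeds into $J_0$. This gives $h^{(0)}(z,z)\ne3$, hence $h^{(0)}(z,z)\ge4$ for all $z\ne0$, and therefore $\min B_0=8$.
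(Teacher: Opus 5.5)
Your proposal is correct and is essentially the paper's proof. Proposition~\ref{prop:dproj-J0} transfers the tensor-rank $\le 3$ bound and the tensor-rank-$4$ bound $h^{(0)}(z,z)\ge 3$ verbatim, and the remaining norm-$3$ case is excluded by rerunning the enumeration and embedding tests of Proposition~\ref{prop:tensor-rank-4-case2} with $(J_0,h_{11}^{(0)})$ in place of $(J,h_{11})$. Your doubt about a shortcut via $T$ is justified: $(J,B_J)$ and $(J_0,B_J^{(0)})$ are not even $\mathbb Z$-isometric (they have $1012$ versus $506$ vectors of norm $12$), so the paper likewise redoes the computation for $J_0$ directly.
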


\begin{proof}
By Proposition~\ref{prop:dproj-J0}, the arguments of Propositions~\ref{prop:tensor-rank-3} and~\ref{prop:tensor-rank-4-case1} apply to $L_0$.
Repeating the enumeration of Proposition~\ref{prop:tensor-rank-4-case2} for $J_0$ excludes Hermitian norm $3$.
Thus $(L_0,B_0)$ is extremal.
\end{proof}

\begin{remark}\label{rem:ideal-class-exhaustion}
We finally note that allowing an arbitrary ideal of $\Q(\zeta_{23})$ in place of $J$ does not yield any further isometry classes of $\mathbb Z$-lattices.
More precisely, let $I$ be a nonzero ideal of $\mathcal O_{\Q(\zeta_{23})}$. Since the narrow class group of $\Q(\zeta_{23})^+$ is trivial, we may choose a totally positive element $a_I\in\Q(\zeta_{23})^+$ such that $I\bar I=a_I\mathcal O_{\Q(\zeta_{23})}$, and equip $I$ with the Hermitian form
$h_I(u,v)=\operatorname{Tr}_{\Q(\zeta_{23})/F}\bigl((\delta_{11}a_I)^{-1}u\bar v\bigr)$.
If $I'=xI$ for some $x\in\Q(\zeta_{23})^\times$, then we may take $a_{I'}=x\bar x\,a_I$, and multiplication by $x$ induces an isometry $(I,h_I)\cong(I',h_{I'})$. 
It follows from Remark \ref{rem:norm-surj-11} that changing the totally positive generator $a_I$ does not change the isometry class. 
Thus the isometry class of the normalized ideal lattice depends only on the ideal class of $I$.

Since $\Cl(\mathcal O_{\Q(\zeta_{23})})\cong C_3$ and its two nontrivial classes are represented by $J$ and $\bar J$, there are only three ideal classes to consider: the principal class, represented by $J_0$, and the two conjugate nontrivial classes represented by $J$ and $\bar J$.
Moreover, complex conjugation induces a $\mathbb Z$-isometry
$E\otimes_{\mathcal O_F}J\to E\otimes_{\mathcal O_F}\bar J$,
given by $u\otimes x\mapsto\bar u\otimes\bar x$, between the corresponding trace lattices. Hence the two nontrivial ideal classes give isometric $\mathbb Z$-lattices. Consequently, the normalized ideal-lattice construction over $\Q(\zeta_{23})$ yields, up to $\mathbb Z$-isometry, precisely the two lattices $(L,B)$ and $(L_0,B_0)$.
\end{remark}

\section{On Extremeness}

In this section, we recall the definitions of perfect, eutactic, and extreme lattices and show that both $(L,B)$ and $(L_0,B_0)$ are extreme.

For a positive definite $\mathbb Z$-lattice $M$ of rank $r$, put
\[
\min(M) := \min_{x \in M\setminus\{0\}}  (x,x), \qquad
\operatorname{Min}(M) := \{x\in M\setminus\{0\}\mid (x,x)=\min(M)\}.
\]
The lattice $M$ is called \emph{perfect} if
\[
\operatorname{span}_{\mathbb R}
\{xx^{\mathsf T} \mid x\in\operatorname{Min}(M)\}
=
\operatorname{Sym}_r(\mathbb R).
\]
Here we choose an orthonormal basis of $M\otimes_{\mathbb Z}\mathbb R$ and regard the elements of $M$ as column vectors in $\mathbb R^r$.
The lattice $M$ is called \emph{eutactic} if there exist real numbers $\lambda_x>0$ for $x\in\operatorname{Min}(M)$ such that
\[
I_r =
\sum_{x\in\operatorname{Min}(M)}
\lambda_x xx^{\mathsf T}.
\]
Here $I_r$ denotes the $r\times r$ identity matrix.
If all the $\lambda_x$ can be chosen to be equal, then $M$ is called \emph{strongly eutactic}.
Finally, $M$ is called \emph{extreme} if its similarity class is a local maximum of the Hermite invariant
\[
\gamma(M) := \frac{\min(M)}{\det(M)^{1/r}}
\]
on the space of positive definite quadratic forms modulo positive scalar multiplication. Equivalently, the density of the corresponding lattice sphere packing is locally maximal.
A lattice $M$ is extreme if and only if it is perfect and eutactic (see, for example, \cite[Theorem~3.4.6]{Martinet2003}).

We next recall the relation with spherical designs.
For $m>0$, a nonempty finite subset $X$ of the sphere $S^{r-1}(m):=\{x\in\mathbb R^r\mid(x,x)=m\}$ of radius $\sqrt m$ is called a \emph{spherical $t$-design} if
for every polynomial $p$ of degree at most $t$,
\[
\frac{1}{|X|}
\sum_{x\in X}p(x)
=
\frac{1}{\operatorname{vol}(S^{r-1}(m))}
\int_{S^{r-1}(m)}p(x)\,dx,
\]
where $dx$ denotes the surface measure.  In particular, $M$ is strongly eutactic if and only if
$\operatorname{Min}(M)$ is a spherical $2$-design
(see~\cite[Remark~2.8]{NebeVenkov2013}).

\begin{proposition}\label{prop:extreme88}
The rank \(88\) extremal lattices \((L,B)\) and \((L_0,B_0)\)
are perfect and strongly eutactic.
Consequently, both lattices are extreme.
\end{proposition}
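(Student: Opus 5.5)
The argument has two independent parts: strong eutaxy, which follows from Venkov's theory, and perfection, which requires a rank computation. Extremeness then follows from the criterion recalled above (\cite[Theorem~3.4.6]{Martinet2003}): a lattice is extreme if and only if it is perfect and eutactic, and strong eutaxy implies eutaxy.

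\emph{Strong eutaxy.} By Corollary~\ref{cor:extremal} and Proposition~\ref{prop:L0-extremal}, both $(L,B)$ and $(L_0,B_0)$ are extremal even unimodular lattices of rank $88=3\cdot 24+16$. Venkov's theorem~\cite{Venkov1984} says that the minimal vectors of an extremal even unimodular lattice of rank $24k+16$ form a spherical $3$-design, hence in particular a spherical $2$-design. The equivalence quoted from \cite[Remark~2.8]{NebeVenkov2013} then gives strong eutaxy for both lattices. This part needs no computation.

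\emph{Perfection.} This is the main obstacle. It does not follow from design properties, so we verify it directly: the matrices $xx^{\mathsf T}$ for $x\in\operatorname{Min}$ must span $\operatorname{Sym}_{88}(\mathbb R)$, a space of dimension $88\cdot 89/2=3916$.

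\begin{enumerate}[label=(\arabic*)]
\item Enumerate $\operatorname{Min}(L)$, i.e.\ the vectors with $h(z,z)=4$, or equivalently $B(z,z)=8$. The kissing number is forced by the extremal theta series and is of order $10^{8}$–$10^{9}$, so full enumeration is heavy. We do not need all of it: it is enough to exhibit $3916$ minimal vectors whose symmetric squares are linearly independent.
\item Produce such vectors cheaply using the structure. Take pure tensors $e\otimes x$, where $e$ runs over elements with $h_4(e,e)=1$ (the $240$ roots of $E_8$) and $x$ runs over vectors with $h_{11}(x,x)=4$ in $J$. These tensors have norm $4$ and are therefore minimal. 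Apply the group $\Aut_{\mathcal O_F}(E,h_4)\times \PSL_2(23)$ from Theorem~\ref{grp:thm:fullgroups} to them, and if necessary add further tensor-rank-$2$ minimal vectors.
\item Record each $xx^{\mathsf T}$ as a vector in $\mathbb Z^{3916}$ with respect to a $\mathbb Z$-basis of $L$, using the Gram coordinates of $B$. Compute the rank of the resulting matrix modulo a large prime $p$. Since the rank over $\mathbb F_p$ is at most the rank over $\mathbb Q$, obtaining rank $3916$ modulo $p$ certifies perfection rigorously.
\end{enumerate}

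Alternatively, the group action reduces the check to the invariant-theoretic statement that the minimal vectors span every irreducible $\Aut$-constituent of $\operatorname{Sym}^2$. The modular rank computation is simpler to certify, though, so we would use it.

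We repeat steps (1)–(3) for $L_0$ with $J_0$ in place of $J$, again using Proposition~\ref{prop:dproj-J0} and the group $\Aut_{\mathbb Z}(L_0,B_0)$. The expected difficulty is purely computational: generating enough independent minimal vectors, and managing the exact-rank computation for a matrix of size about $3916\times N$.
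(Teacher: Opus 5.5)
Your outline has the same structure as the paper's proof: strong eutaxy from Venkov's $3$-design theorem, perfection by certifying that the symmetric squares of some set of minimal vectors have rank $3916$ modulo a prime, and extremeness from the perfect-plus-eutactic criterion. The gap is in step (2), and that step carries all the content of the perfection argument.

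\textbf{Step (2) produces no minimal vectors.} There are no vectors $x\in J$ with $h_{11}(x,x)=4$. The minimum of $h_{11}$ on $J$ is $6$: take $\mathfrak a=\mathcal O_F$ in Proposition~\ref{prop:d_1(J)}; the $1012$ vectors of $B_J$-norm $12$ are the minimal ones. The same holds for $J_0$. More generally, Proposition~\ref{prop:inequality-tensor} with $s=1$ gives $h(z,z)\ge d_1^{\mathrm{proj}}(E)\,d_1^{\mathrm{proj}}(J)=6$ for every $z$ of tensor rank $1$. So no pure tensor is a minimal vector of $L$ or $L_0$, and applying $\operatorname{Aut}_{\mathcal O_F}(E,h_4)\times\operatorname{PSL}_2(23)$ to an empty set gives nothing.

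\textbf{The fallback is unsupported.} For tensor rank $2$, a vector of norm $4$ would need rank-$2$ sublattices $M_1\subset E$ and $M_2\subset J$ with $d_{M_1}d_{M_2}\le 4$. The minimum possible value of this product is $d_2^{\mathrm{proj}}(E)\,d_2^{\mathrm{proj}}(J)=80/23\approx3.48$. Nothing in your proposal shows that such vectors exist, let alone that their symmetric squares span $\operatorname{Sym}_{88}$. The minimal vectors have tensor rank at least $2$ and cannot be read off from short vectors of the factors.

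\textbf{What the paper does instead.} It gives an explicit, verified supply of minimal vectors: $44$ minimal vectors for each lattice, listed in the certificates. It closes them under the cyclic group $\langle\zeta_{115}\rangle$, which acts by multiplication via $L\cong J\mathcal O_{\mathbb Q(\zeta_{115})}$, giving $44\cdot115=5060\ge3916$ vectors. It then works modulo $461\equiv1\pmod{115}$. There the $\zeta_{115}$-action diagonalizes, $\operatorname{Sym}_{88}(\mathbb F_{461})$ splits into $115$ character spaces, and the rank check reduces to a nonzero maximal minor in each space. This is a concrete form of the invariant-theoretic alternative you mention, and it keeps the $3916\times N$ rank computation manageable.

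If you replace step (2) with such a search-based supply, checking $B(x,x)=8$ for each vector, the rest of your argument works as written for both $L$ and $L_0$.
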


\begin{proof}
By Venkov's theorem~\cite{Venkov1984} (see also \cite[Theorem~4.1]{NebeVenkov2013}),
the set of minimal vectors of each of \(L\) and \(L_0\) is a spherical \(3\)-design.
In particular, it is a spherical \(2\)-design, and hence both lattices are strongly eutactic.
It remains to prove perfection.
We verify this computationally, using the cyclic subgroup
$\langle \zeta_{115}\rangle
   \subseteq \operatorname{Aut}_{\mathbb Z}(L,B)$
and, respectively, the corresponding subgroup of
\(\operatorname{Aut}_{\mathbb Z}(L_0,B_0)\).
For each of the two lattices, we choose \(44\) minimal vectors of norm \(8\)
and take their \(\langle\zeta_{115}\rangle\)-orbits.
Writing the resulting minimal vectors \(x\) in a fixed
\(\mathbb Z\)-basis of the lattice, we form the symmetric rank-one matrices $xx^{\mathsf T}\in \operatorname{Sym}_{88}(\mathbb Z)$. 
Separate exact certificates for \(L\) and \(L_0\) list these \(44\) vectors, the integral order-\(115\) actions, an invertible eigenbasis modulo \(461\), and nonzero maximal minors in all \(115\) symmetric-square character spaces.
Since
\[
\dim_{\mathbb F_{461}}\operatorname{Sym}_{88}(\mathbb F_{461})
=
\frac{88\cdot 89}{2}
=
3916,
\]
and the computation shows that the reductions of the matrices \(xx^{\mathsf T}\) span a \(3916\)-dimensional subspace, they span all of 
$\operatorname{Sym}_{88}(\mathbb F_{461})$. 
Therefore the matrices \(xx^{\mathsf T}\) already span
\(\operatorname{Sym}_{88}(\mathbb Q)\), and hence
\(\operatorname{Sym}_{88}(\mathbb R)\).
Thus the minimal vectors determine the quadratic form uniquely up to scale,
so both \(L\) and \(L_0\) are perfect.
\end{proof}

\begin{proposition}\label{prop:not-4-design}
Let $M$ be an extremal positive definite even unimodular lattice of rank $88$. Then $\operatorname{Min}(M)$ is not a spherical $4$-design.
More precisely, for every $a\in \operatorname{Min}(M)$,
\[
\sum_{x\in \operatorname{Min}(M)}(a,x)^4 \equiv0\pmod 9.
\]
On the other hand, if \(\operatorname{Min}(M)\) were a spherical \(4\)-design, then 
$\sum_{x\in\operatorname{Min}(M)}(a,x)^4
=
261427200
\equiv 6 \pmod 9$. 
\end{proposition}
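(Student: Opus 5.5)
The plan is to compare two expressions for the fourth moment $\sum_{x\in\operatorname{Min}(M)}(a,x)^4$: the value forced by a hypothetical spherical $4$-design, and the value forced by the modular form attached to a degree-$4$ harmonic polynomial.

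First I would fix the numerical data of an extremal lattice of rank $88$. The theta series $\theta_M$ is the unique modular form of weight $44$ of the shape $1+O(q^4)$, with $q$ counting $(x,x)/2$. Writing it in the basis $E_4^{11-3j}\Delta^j$ for $j=0,\dots,3$ and forcing the coefficients of $q,q^2,q^3$ to vanish determines the kissing number $N=|\operatorname{Min}(M)|$ and the number $N_{10}$ of vectors of norm $10$. For a $4$-design on the sphere of radius $\sqrt m$ with $m=8$ in dimension $r=88$, the standard moment identity gives
\[
\sum_{x\in \operatorname{Min}(M)}(a,x)^4=\frac{3N m^4}{r(r+2)}=\frac{3\cdot 8^4 N}{88\cdot 90}.
\]
Substituting $N$ should produce $261427200$, and reducing modulo $9$ gives $6$. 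This part is routine arithmetic.

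For the congruence, fix $a\in\operatorname{Min}(M)$ and use the harmonic polynomial
\[
P_a(x)=(a,x)^4-\frac{6\,(a,a)(a,x)^2(x,x)}{r+4}+\frac{3\,(a,a)^2(x,x)^2}{(r+2)(r+4)}.
\]
The series $\theta_{M,P_a}=\sum_{x\in M}P_a(x)q^{(x,x)/2}$ is a cusp form of weight $48$ for $\mathrm{SL}_2(\mathbb Z)$ that vanishes to order at least $4$. The space of such forms is spanned by $\Delta^4$ alone, so $\theta_{M,P_a}=c_a\Delta^4=c_a(q^4-96q^5+\cdots)$. Comparing the $q^4$ coefficient gives
\[
c_a=\sum_{x\in\operatorname{Min}}P_a(x).
\]
Comparing the $q^5$ coefficient gives
\[
\sum_{(x,x)=10}P_a(x)=-96\,c_a.
\]
By the $3$-design property (Venkov), the second moments over norm $8$ and over norm $10$ vectors are known multiples of $N$ and $N_{10}$, as are the zeroth moments. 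Hence both relations become linear identities between $S_8:=\sum_{\operatorname{Min}}(a,x)^4$, $S_{10}:=\sum_{(x,x)=10}(a,x)^4$, and known rationals.

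I would then eliminate $c_a$ to obtain an identity of the form $S_{10}+96S_8=$ an explicit rational constant, with $S_8,S_{10}\in\mathbb Z$. Independently, I would write $S_8=\sum_k k^4 n_k$, where $n_k$ counts $x\in\operatorname{Min}(M)$ with $(a,x)=k$. Here $k$ ranges over $\{0,\pm1,\pm2,\pm3,\pm4,\pm8\}$ and $n_{\pm8}=1$. Combining this with the second-moment relation $\sum_k k^2n_k=64N/88$ lets me reduce $\sum_k k^4n_k$ modulo $9$, using $k^4\equiv 0,1,7,4,1\pmod 9$ for $|k|=3,1,2,4,8$. The same analysis applies to the norm-$10$ pairings.

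The main obstacle is this last step: extracting a genuinely $9$-adic constraint from integrality. Neither relation alone determines $S_8$ modulo $9$, so one must track the $3$-adic valuations of the denominators $r+4=92$, $(r+2)(r+4)$, and $96=2^5\cdot 3$ in the combined identity. What I would try first is to multiply through by the common denominator and read the resulting integral identity modulo $27$. If that is not enough, I would bring in the next coefficient ($q^6$, norm $12$) in the same way, since the coefficients of $\Delta^4$ there carry further factors of $3$.
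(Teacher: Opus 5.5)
Your preliminary computations are correct: $N=168498000$, the design value $3\cdot 8^4N/(88\cdot 90)=261427200\equiv 6\pmod 9$, and $c_a=S_8-261427200$. But there is a genuine gap at exactly the step you flag, and the tools you propose cannot close it.

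The identity $\theta_{M,P_a}=c_a\Delta^4$ only says that every shell's harmonic fourth moment is a known multiple of $c_a$. So the $q^5$ coefficient gives $S_{10}+96S_8=C$ for a known constant $C$. This trades the unknown $S_8$ for the new unknown $S_{10}$. To read off $S_8 \bmod 9$ from it, you would need $S_{10}\bmod 27$.

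The constraints you have on the norm-$10$ pairings are: values in $\{0,\pm1,\dots,\pm5\}$, and fixed zeroth and second moments. These determine $S_{10}$ only modulo $3$. For example, put $b_j=\#\{x:(x,x)=10,\ (a,x)=j\}$. Replacing $(b_0,b_1,b_2)$ by $(b_0+6,b_1-4,b_2+1)$ preserves both moments but changes $S_{10}$ by $24$. Passing to $q^6$ only adds the norm-$12$ shell, which has the same defect.

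On $\operatorname{Min}(M)$ itself, the zeroth and second moment relations give only $S_8\equiv 6(n_2-n_4)\pmod 9$, i.e. $S_8\equiv 0\pmod 3$. That is fully compatible with the design value $\equiv 6\pmod 9$. So no $3$-adic bookkeeping with these relations can produce the congruence.

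The missing ingredient is a relation that lives entirely on $\operatorname{Min}(M)$. It comes from a harmonic polynomial of degree $6$, not $4$. For harmonic $P_{6,a}$ of degree $6$, the series $\Theta_{M,P_{6,a}}$ is a cusp form of weight $50$ divisible by $\Delta^4$. The quotient lies in $M_2(\mathrm{SL}_2(\mathbb Z))=0$, so $\Theta_{M,P_{6,a}}$ vanishes identically. Its $q^4$ coefficient gives
$$S_6=10(S_8-170496000),\qquad S_6=\sum_{x\in\operatorname{Min}(M)}(a,x)^6 .$$

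You then have four linear relations (moments $0,2,4,6$) in $n_0,\dots,n_4$. Eliminate $n_1,n_2,n_3$ using the coefficients of $(s-1)(s-4)(s-9)$ as multipliers. This yields $9n_0=630n_4+S_8+441661950$, hence $S_8\equiv 0\pmod 9$. Modulo $9$, this is just the statement that $S_6\equiv S_8$ forces $n_2\equiv n_4\pmod 3$.

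This is the paper's argument. The degree-$4$ weighted theta series plays no role in the congruence; it enters only through the design identity that fixes $261427200$.
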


\begin{proof}
We use the convention
\[
\Theta_M(\tau) :=
\sum_{x\in M}q^{(x,x)/2},
\qquad
q := e^{2\pi \sqrt{-1}\tau}.
\]
Since $M$ is an even unimodular lattice of rank $88$, its theta series is a modular form of weight $44$ for $\operatorname{SL}_2(\mathbb Z)$. Since $M$ is extremal, its minimum is $8$, so the coefficients of $q$, $q^2$, and $q^3$ vanish. The space $M_{44}(\operatorname{SL}_2(\mathbb Z))$ has basis
\[
E_4^{11},\qquad
\Delta E_4^8,\qquad
\Delta^2E_4^5,\qquad
\Delta^3E_4^2.
\]
Imposing the vanishing of the coefficients of $q$, $q^2$, and $q^3$ gives
\[
\Theta_M
=
E_4^{11}
-2640\Delta E_4^8
+1813680\Delta^2E_4^5
-244992000\Delta^3E_4^2.
\]
The cardinality of $\operatorname{Min}(M)$ is the coefficient of $q^4$, namely
\[
|\operatorname{Min}(M)|=168498000.
\]
Fix $a\in \operatorname{Min}(M)$. Thus $(a,a)=8$. For a homogeneous harmonic polynomial $P$ of  degree $d>0$, the weighted theta series
\[
\Theta_{M,P}(\tau)
:=
\sum_{x\in M}P(x)q^{(x,x)/2}
\]
is a modular form of weight $44+d$. Since $d>0$ and $\min(M)=8$, it vanishes to order at least $4$ at the cusp. Hence $\Theta_{M,P}(\tau)$ is divisible by $\Delta^4$.
In particular, if $d < 4$, then $\Theta_{M,P}(\tau) = 0$.
Since the polynomial
\[
P_{2,a}(x)
:=
(a,x)^2-\frac1{11}(x,x)
\]
is harmonic of degree $2$, we have $\Theta_{M,P_{2,a}}=0$. 
Put $S_k(a) := \sum_{x\in \operatorname{Min}(M)}(a,x)^k$. 
Taking the coefficient of $q^4$ gives
\[
S_2(a)-\frac8{11}|\operatorname{Min}(M)|=0,
\]
and therefore $S_2(a)=122544000$.
We next use harmonic polynomials of degree $6$.  The polynomial
\[
P_{6,a}(x)
:=
(a,x)^6-\frac54(x,x)(a,x)^4+\frac{15}{47}(x,x)^2(a,x)^2
-\frac{10}{1081}(x,x)^3
\]
is harmonic. Indeed, using $(a,a)=8$ and
\[
\Delta\bigl((x,x)^{j}(a,x)^k\bigr)
=
2j(88+2j+2k-2)(x,x)^{j-1}(a,x)^k
+
8k(k-1)(x,x)^{j}(a,x)^{k-2},
\]
one checks directly that $\Delta P_{6,a}=0$.
Then $\Theta_{M,P_{6,a}}/\Delta^4$ is a holomorphic modular form of weight $2$. Since
$M_2(\operatorname{SL}_2(\mathbb Z))=0$, we obtain  $\Theta_{M,P_{6,a}}=0$.
Hence the coefficient of $q^4$ gives
\[
S_6(a)
-10S_4(a)
+\frac{960}{47}S_2(a)
-\frac{5120}{1081}|\operatorname{Min}(M)|
=0.
\]
Using the values of $S_2(a)$ and $|\operatorname{Min}(M)|$, this becomes
\begin{equation}
S_6(a)
=
10\bigl(S_4(a)-170496000\bigr).
\label{eq:S6S4}
\end{equation}
Since $M$ is integral, $(a,x)\in\mathbb Z$ for every $x\in \operatorname{Min}(M)$. If $x\ne\pm a$, then
\[
(x-a,x-a)\ge8
\qquad\text{and}\qquad
(x+a,x+a)\ge8.
\]
Since $(x,x)=(a,a)=8$, it follows that $|(a,x)|\le 4$.
Moreover, $(a,x)=8$ occurs only for $x=a$, and $(a,x)=-8$ occurs only for $x=-a$. 
For $0\le j\le4$, put
\[
a_j:=\#\{x\in \operatorname{Min}(M)\mid(a,x)=j\}.
\]
Since $\operatorname{Min}(M)=-\operatorname{Min}(M)$, the numbers of vectors with inner products $j$ and $-j$ are equal. We therefore have
\begin{align*}
a_0+2(a_1+a_2+a_3+a_4)+2 &= 168498000,
\\
2(a_1+4a_2+9a_3+16a_4)+128 &=122544000,
\\
2(a_1+16a_2+81a_3+256a_4)+8192 &=S_4(a),
\\
2(a_1+64a_2+729a_3+4096a_4)+524288 &=S_6(a).
\end{align*}
Substituting \eqref{eq:S6S4} into the last equation and eliminating
$a_1,a_2,a_3$ from these four relations gives
\[
9a_0
=
630a_4+S_4(a)+441661950.
\]
It follows that  $S_4(a)\equiv0\pmod9$.

Suppose now, for contradiction, that $\operatorname{Min}(M)$ is a spherical $4$-design. The polynomial
\[
P_{4,a}(x)
:=
(a,x)^4
-\frac{12}{23}(x,x)(a,x)^2
+\frac{8}{345}(x,x)^2
\]
is harmonic. Since $\operatorname{Min}(M)$ is assumed to be a spherical $4$-design,  $\sum_{x\in \operatorname{Min}(M)}P_{4,a}(x)=0$.
As $(x,x)=8$ for $x\in \operatorname{Min}(M)$, this gives
\[
S_4(a)
-\frac{96}{23}S_2(a)
+\frac{512}{345}|\operatorname{Min}(M)|
=0.
\]
Therefore $S_4(a)=261427200 \equiv6\pmod9$, contradicting  $S_4(a)\equiv0\pmod9$.
\end{proof}

\begin{corollary}\label{cor:L0-not-4-design}
Neither $\operatorname{Min}(L)$ nor $\operatorname{Min}(L_0)$ is a spherical $4$-design.
\end{corollary}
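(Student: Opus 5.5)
This is a direct application of Proposition~\ref{prop:not-4-design}, once we know that both $(L,B)$ and $(L_0,B_0)$ satisfy its hypotheses.

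First, by Corollary~\ref{cor:extremal}, $(L,B)$ is an extremal positive definite even unimodular lattice of rank $88$. Likewise, by Proposition~\ref{prop:L0-extremal} together with the discussion preceding it (which establishes evenness and unimodularity exactly as in Proposition~\ref{prop:tensor-decomposition}), $(L_0,B_0)$ is an extremal positive definite even unimodular lattice of rank $88$.

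Next, take $M$ to be either of these lattices. Both have minimum $8$, so $\operatorname{Min}(M)$ is nonempty; fix any $a\in\operatorname{Min}(M)$. Proposition~\ref{prop:not-4-design} gives two facts about $S_4(a)=\sum_{x\in\operatorname{Min}(M)}(a,x)^4$:
\begin{itemize}
\item unconditionally, $S_4(a)\equiv 0\pmod 9$;
\item if $\operatorname{Min}(M)$ were a spherical $4$-design, then $S_4(a)=261427200\equiv 6\pmod 9$.
\end{itemize}
These two congruences are incompatible, so $\operatorname{Min}(M)$ is not a spherical $4$-design.

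Applying this to $M=L$ and to $M=L_0$ proves the corollary. There is no real obstacle here: all the substantive work is contained in the extremality results and in Proposition~\ref{prop:not-4-design}.
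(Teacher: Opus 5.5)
Your proposal is correct and matches the paper's argument: $(L,B)$ and $(L_0,B_0)$ are extremal even unimodular lattices of rank $88$ by Corollary~\ref{cor:extremal} and Proposition~\ref{prop:L0-extremal}, so Proposition~\ref{prop:not-4-design} applies to each of them. The clash between $S_4(a)\equiv 0 \pmod 9$ and the value $261427200\equiv 6 \pmod 9$ forced by the $4$-design condition is exactly how that proposition rules out the design property, so nothing further is needed.
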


\appendix
\section{\texorpdfstring{The isometry groups of \((L,B)\) and \((L_0,B_0)\)}{The isometry groups of L and L0}}
\label{app:fullgroups}

In this appendix, we determine the isometry groups of \((L,B)\) and \((L_0,B_0)\).
Our argument follows the strategy of \cite[Section~6]{Nebe2013Automorphisms} and uses the classification of finite simple groups, Hall's theorem on groups of symplectic type, and the corrected classification of low-dimensional representations by Hiss--Malle~\cite{HissMalle2001,HissMalle2002}.

\subsection{\texorpdfstring{The automorphism groups of \(E\), \(J\), and \(J_0\)}{The automorphism groups of E, J, and J0}}
\label{grp:sec:factors}

Let \((M,h)\) be a Hermitian \(\mathcal O_F\)-lattice. An element of
$\operatorname{Aut}_{\mathbb Z}(M,\operatorname{Tr}_{F/\mathbb Q}h)$
is $\mathcal O_F$-linear precisely when it commutes with multiplication by
$\alpha$, and is conjugate-linear precisely when it intertwines multiplication
by $\alpha$ with multiplication by $\overline{\alpha}=1-\alpha$.
Moreover,
\begin{align*}
    h(u,v)
=
\frac{
12\operatorname{Tr}_{F/\mathbb Q}h(u,v)
-\operatorname{Tr}_{F/\mathbb Q}h(u,\alpha v)
}{23}
+
\frac{
2\operatorname{Tr}_{F/\mathbb Q}h(u,\alpha v)
-\operatorname{Tr}_{F/\mathbb Q}h(u,v)
}{23}\alpha.
\end{align*}
It follows that
\[
\operatorname{Aut}_{\mathcal O_F}(M,h) = 
\left\{
g\in
\operatorname{Aut}_{\mathbb Z}
\bigl(M,\operatorname{Tr}_{F/\mathbb Q}h\bigr)
\;\middle|\;
g\alpha=\alpha g
\right\},
\]
whereas the conjugate-linear isometries are precisely 
$\left\{
g\in
\operatorname{Aut}_{\mathbb Z}
\bigl(M,\operatorname{Tr}_{F/\mathbb Q}h\bigr)
\;\middle|\;
g\alpha=\overline{\alpha}g
\right\}$. 


\begin{proposition}\label{grp:prop:Egroup}
Put $G_4:=\operatorname{Aut}_{\mathcal O_F}(E,h_4)$. Then $|G_4|=240$, and its GAP Small Groups Library identifier is $\operatorname{SmallGroup}(240,89)$. This is the negative double cover $2^-S_5$ of $S_5$. 
In particular,
\[
Z(G_4)=\langle-1\rangle,\qquad
[G_4,G_4]\cong\operatorname{SL}_2(5),\qquad
G_4/Z(G_4)\cong S_5.
\]
We also have
\[
\{g\in\operatorname{Aut}_{\mathbb Z}
(E,\operatorname{Tr}_{F/\mathbb Q}h_4)
\mid
g\alpha g^{-1}\in\{\alpha,\overline{\alpha}\}\}
\cong G_4\rtimes C_2.
\]
\end{proposition}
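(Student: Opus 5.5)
The plan is to use the fact that $(E,B_4)\cong E_8$, so that everything lives inside $W(E_8)=\operatorname{Aut}_{\mathbb Z}(E,B_4)$, of order $696729600$, and to identify $G_4$ as the centralizer there of the order-$4$-ish structure given by multiplication by $\alpha$. Concretely, $\sqrt{-23}=2\alpha-1$ acts on $E$ as an endomorphism $\iota$ with $\iota^2=-23$. Since $\operatorname{Tr}_{F/\mathbb Q}h_4(\alpha u,v)=\operatorname{Tr}_{F/\mathbb Q}h_4(u,\bar\alpha v)$, the operator $\iota$ is skew-adjoint for $B_4$. By the criterion at the start of \S\ref{grp:sec:factors}, $G_4$ is the centralizer $C_{W(E_8)}(\iota)$. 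The conjugate-linear isometries form the set of $g$ with $g\iota g^{-1}=-\iota$.

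\emph{Computing $G_4$.} First I will exhibit the obvious subgroup $\langle -\zeta_5\rangle\cong C_{10}$ acting by multiplication. I will also include the Galois automorphisms of $F(\zeta_5)/F$ (these preserve $h_4$ because $\delta_4\in F(\zeta_5)^+$ is Galois-stable up to a totally positive unit norm, which can be corrected using the remark after Proposition~\ref{prop:rank4-algebraic-properties}). Together these give a metacyclic group of order $40$ inside $G_4$. To get the full group, I will compute directly: enumerate the $240$ roots of $E_8$, that is, the vectors with $h_4(z,z)=1$, using the Gram matrix in Remark~\ref{rem:E-Gram}. Then I will search for $\mathcal O_F$-linear isometries by sending the basis $p_1,\dots,p_4$ to $4$-tuples of roots with the same Hermitian Gram matrix. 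Each $g\in G_4$ is determined by the images of $p_1,\dots,p_4$, so this is a finite backtracking search over $240^4$ tuples, pruned by the Gram matrix conditions.

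I expect to find exactly $240$ solutions. Since $G_4$ permutes the $240$ roots and contains $-\zeta_5$, it should act regularly on them, which gives a clean certificate for $|G_4|=240$. The mechanism is this: the stabilizer of a root $r$ fixes $\mathcal O_F r$, hence acts on the rank-$3$ orthogonal complement, and the search shows that this action is trivial.

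\emph{Identifying the group.} With explicit generators (as $8\times 8$ integer matrices) I will compute the following in GAP: the centre $\langle -1\rangle$; the derived subgroup, recognized as $\operatorname{SL}_2(5)$ (order $120$, perfect, unique involution); and the quotient $G_4/Z(G_4)$ of order $120$, shown not to be $A_5\times C_2$, hence $S_5$. Among the two double covers $2^\pm S_5$ one must decide the sign. Here I will use that the preimage in $G_4$ of a transposition of $S_5$ has order $4$ rather than $2$: there is no $\mathcal O_F$-linear reflection of $E$, since an $\mathcal O_F$-linear involution $\neq\pm1$ has even-dimensional eigenspaces over $\mathbb Q$. So the element lifting a transposition squares to $-1$, which matches the negative cover $2^-S_5=\operatorname{SmallGroup}(240,89)$. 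The IdGroup call then confirms this.

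\emph{The semilinear extension.} The complex conjugation $c\colon u\mapsto\bar u$ on $\mathcal O_{F(\zeta_5)}$ satisfies $h_4(\bar u,\bar v)=\overline{h_4(u,v)}$ because $\bar\delta_4=\delta_4$. Hence $c$ is a $B_4$-isometry with $c\alpha c^{-1}=\bar\alpha$, and $c^2=1$. The stabilizer of $\{\alpha,\bar\alpha\}$ therefore contains $G_4$ with index exactly $2$, and since $c$ is an involution outside $G_4$ normalizing it, the stabilizer is $G_4\rtimes\langle c\rangle$. The main obstacle is making the order count $|G_4|=240$ rigorous rather than merely computational. I will rely on the explicit enumeration and the regularity argument above, cross-checked by computing $C_{W(E_8)}(\iota)$ directly in GAP.
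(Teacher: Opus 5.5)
Your plan follows the paper's route. You compute $G_4$ by machine as the centralizer of multiplication by $\alpha$ in $\operatorname{Aut}_{\mathbb Z}(E,B_4)\cong W(E_8)$, identify it with IdGroup, and use complex conjugation $c$ (an isometry because $\bar\delta_4=\delta_4$) to get the split index-two extension $G_4\rtimes\langle c\rangle$. Your backtracking over $4$-tuples of roots with the Gram matrix of Remark~\ref{rem:E-Gram} is a valid way to do the computation: such a tuple spans a free sublattice of discriminant $d_E$, hence all of $E$.

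One step does not work as stated. The non-existence of $\mathcal O_F$-linear reflections says nothing about the sign of the double cover. Suppose a transposition lifted to an involution $g\ne\pm1$. Its eigenspaces would be $F$-subspaces of $F$-dimensions $(2,2)$ or $(1,3)$, so they have rational dimension at least $2$. Such a $g$ is never a reflection, so your observation is compatible with both $2^{+}S_5$ and $2^{-}S_5$. As written, the sign is decided only by the IdGroup call, which is exactly how the paper proceeds.

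If you want a conceptual argument, use Proposition~\ref{prop:d_s(E)} instead. For such a $g$, put $E_\pm:=E\cap\ker(g\mp1)$ and $r:=\operatorname{rank}_{\mathcal O_F}E_+$.
\begin{itemize}
\item Since $2E\subset E_+\perp E_-$ and $(E,B_4)$ is unimodular, $E_+^*/E_+\cong E/(E_+\perp E_-)\cong(\mathbb Z/2)^k$ with $k\le 2r$. Here $E_+^*$ is the dual with respect to $B_4$.
\item On the other hand, $\det(E_+,B_4)=23^{r}d_{E_+}^2$ with $d_{E_+}\in\mathbb Q$. This forces $r=2$ and $d_{E_+}=2^{k/2}/23\le 4/23<5/23=d_2^{\mathrm{proj}}(E)$, a contradiction.
\end{itemize}
Hence $-1$ is the only involution in $G_4$. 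So lifts of transpositions have order $4$, and the cover is the non-split one.
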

\begin{proof}
Computing the centralizer of multiplication by $\alpha$ in $\operatorname{Aut}_{\mathbb Z}
(E,\operatorname{Tr}_{F/\mathbb Q}h_4) \cong \operatorname{Aut}_{\mathbb Z}
(E_8)$, we obtain
$|G_4|=240$ and
$G_4\cong\operatorname{SmallGroup}(240,89)$,
from which the assertions about the center, derived subgroup, and quotient by the center follow. 

Since $E=\mathcal O_{F(\zeta_5)}$  and $\overline{\delta_4}=\delta_4$, complex conjugation
$c\colon E\to E$, $u\mapsto\overline{u}$,
is a conjugate-linear involution. 
Every conjugate-linear isometry is the product of $c$ and an
$\mathcal O_F$-linear isometry. Hence the group of all
$\mathcal O_F$-linear and conjugate-linear isometries is
$G_4 \rtimes\langle c\rangle$
and has order $480$.
\end{proof}

Recall $B_{J} = \operatorname{Tr}_{F/\mathbb{Q}}h_{11}$, and put $B_{J}^{(0)} := \operatorname{Tr}_{F/\mathbb{Q}}h_{11}^{(0)}$.

\begin{proposition}\label{grp:prop:Jgroups}
The following statements hold true. 
\begin{enumerate}[label=(\roman*)]
\item
$\operatorname{Aut}_{\mathcal O_F}(J)
=\operatorname{Aut}_{\mathbb Z}(J,B_J)
\cong C_2\times\operatorname{PSL}_2(23)$.

\item 
$\operatorname{Aut}_{\mathcal O_F}(J_0)
=\operatorname{Aut}_{\mathcal O_F}(J)$
as subgroups of $\operatorname{GL}_F(JF) = \operatorname{GL}_F(J_0F)$.

\item
Moreover,
\[
\operatorname{Aut}_{\mathbb Z}(J_0,B_J^{(0)})
=
\{g\in\operatorname{Aut}_{\mathbb Z}(J_0,B_J^{(0)})
\mid g\alpha g^{-1}\in\{\alpha,\overline{\alpha}\}\}
\cong C_2\times\operatorname{PGL}_2(23).
\]
In particular, $J_0$ admits a conjugate-linear involution inducing the
nontrivial outer automorphism of $\operatorname{PSL}_2(23)$.
\end{enumerate}
\end{proposition}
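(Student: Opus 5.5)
I plan to prove (i) by combining a structural argument with one computer check, in the style of the proof of Proposition~\ref{grp:prop:Egroup}.

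\textbf{Step 1: the easy containment in (i).} By \S\ref{grp:sec:factors}, $\operatorname{Aut}_{\mathcal O_F}(J)$ is the centralizer of multiplication by $\alpha$ in $\operatorname{Aut}_{\mathbb Z}(J,B_J)$. So the containment $\subseteq$ is automatic.

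\textbf{Step 2: a lower bound.} The Galois group $\operatorname{Gal}(\Q(\zeta_{23})/F)\cong C_{11}$ acts on $J$, since $J=(47,\zeta_{23}-21)$ and $21$ generates the index-$2$ subgroup of squares mod $23$. Multiplication by $\zeta_{23}$ is an $\mathcal O_F$-linear isometry. Together with $-1$, these give an explicit subgroup of order $2\cdot 23\cdot 11$ inside $\operatorname{Aut}_{\mathcal O_F}(J)$. Then:
\begin{itemize}
\item use the computer (Plesken--Souvignier) to compute $\operatorname{Aut}_{\mathbb Z}(J,B_J)$ from a $\mathbb Z$-Gram matrix of $B_J$;
\item use the fact, recorded in \S\ref{sec:autgroup}, that it acts faithfully and transitively on the vectors of $B_J$-norm $12$ with stabilizers of order $12$.
\end{itemize}
This gives the group order directly as $12$ times the number of such vectors, which should equal $2\cdot|\PSL_2(23)|=12144$.

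\textbf{Step 3: the equality in (i).} Check that the generators returned by the computation commute with $\alpha$; this gives equality in (i). To identify the isomorphism type, verify that:
\begin{itemize}
\item $-1$ is central and the group splits as $\langle -1\rangle \times G'$;
\item $G'$ is simple of order $6072$.
\end{itemize}
The only simple group of that order is $\PSL_2(23)$. Alternatively, exhibit explicit generators satisfying a presentation of $\PSL_2(23)$.

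\textbf{Step 4: part (ii).} Use $J_0=\bar{\mathfrak p}J$ and $h_{11}^{(0)}=\tfrac12 h_{11}$ on $J_0$. Any $\mathcal O_F$-linear $g\in\GL_F(JF)$ preserving $J$ preserves $\bar{\mathfrak p}J$, and conversely $J=\bar{\mathfrak p}^{-1}J_0$. Isometry for $h_{11}$ is the same as for $h_{11}^{(0)}$, since the forms differ by a scalar. So (ii) is formal.

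\textbf{Step 5: part (iii).} Compute $\operatorname{Aut}_{\mathbb Z}(J_0,B_J^{(0)})$ by machine, verify that every generator conjugates $\alpha$ to $\alpha$ or $\bar\alpha$, and obtain order $2\cdot|\operatorname{PGL}_2(23)|$. For the conjugate-linear involution, take $x\mapsto \gamma\bar x$, where $\gamma$ is a totally positive generator adjusted so that $\gamma\bar J_0=J_0$. Such a $\gamma$ exists because $J_0$ is principal, and the norm normalization uses the maximal unit signature rank, exactly as in the construction of $T$ in \S\ref{sec:autgroup}. This map normalizes the $\mathcal O_F$-linear group and acts on it by an outer automorphism: it inverts the Galois-twisted $\zeta_{23}$-action in a way no inner automorphism does. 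Hence the index-$2$ extension is $C_2\times\operatorname{PGL}_2(23)$ rather than $C_2\times\PSL_2(23)\times C_2$.

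\textbf{Main obstacle.} The hardest part is Step 5, namely ruling out $\mathbb Z$-isometries of $J_0$ that do not normalize $F$. I would first try a structural argument. The $\mathbb Q$-span of the centralizer of $\PSL_2(23)$ acting on $J_0\otimes\Q$ is $F$, because the $11$-dimensional irreducible representations of $\PSL_2(23)$ have character field $F$. So any element normalizing the $\PSL_2(23)$ normalizes $F$. This argument still requires knowing that $\PSL_2(23)$ is normal in the full group, which I would get from the computed group order.
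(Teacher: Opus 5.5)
Your plan is essentially the paper's proof. Both compute $\operatorname{Aut}_{\mathbb Z}(J,B_J)$ and $\operatorname{Aut}_{\mathbb Z}(J_0,B_J^{(0)})$ by machine, with orders $12144=12\cdot1012$ and $24288=48\cdot506$ from orbit--stabilizer on the norm-$12$ vectors. Both check how the elements interact with $\alpha$, give the same formal argument for (ii), and for (iii) use a conjugate-linear involution built from a generator of the principal ideal $J_0$. Recognizing $G'$ as the unique simple group of order $6072$, rather than through its action on the $24$ Sylow $23$-subgroups as the paper does, is an equally valid check.

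Two details are wrong, though neither is load-bearing.

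\textbf{Step~2.} $\operatorname{Gal}(\Q(\zeta_{23})/F)$ does not preserve $J$. Since $47$ splits completely in $\Q(\zeta_{23})$, the decomposition group of $J$ is trivial. Concretely, $\sigma_k(J)=(47,\zeta_{23}^k-21)$, and $21$ has order $23$ modulo $47$, so $\sigma_k(J)=J$ only for $k\equiv1$. Your stated reason also fails on its own terms, since $21\equiv-2$ is a non-square mod $23$. The elements of order $11$ are instead twisted maps $x\mapsto\gamma_\sigma\sigma(x)$ with $\gamma_\sigma\sigma(J)=J$ and $\gamma_\sigma\bar\gamma_\sigma=\delta_{11}a/\sigma(\delta_{11}a)$. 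Such $\gamma_\sigma$ exist because $[\sigma(J)]=[J]$ (the class of $J$ is extended from $F$), together with Remark~\ref{rem:norm-surj-11}. Since you take the group order from the computation anyway, this lower bound is not actually needed.

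\textbf{Step~5.} The multiplier must satisfy $\gamma\bar\gamma=1$ for $x\mapsto\gamma\bar x$ to be an isometric involution. So $\gamma$ cannot be totally positive: that would force $\gamma=1$ and $\bar J_0=J_0$, which is false. Take $\gamma=\beta/\bar\beta$ for $J_0=\beta\mathcal O_{\Q(\zeta_{23})}$, as the paper does; then no unit normalization is needed. Your outerness argument is best stated as follows. Conjugation by this involution sends multiplication by $\zeta_{23}$ to its inverse. An element of order $23$ in $\PSL_2(23)$ is not conjugate to its inverse, because $-1$ is a non-square modulo $23$. The paper leaves this justification implicit.
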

\begin{proof}
We first record the computational results used below. A computer calculation with the two integral Gram matrices gives $1012$ vectors of norm $12$ for $(J,B_J)$ and $506$ for $(J_0,B_J^{(0)})$. The groups $\operatorname{Aut}_{\mathbb Z}(J,B_J)$ and $\operatorname{Aut}_{\mathbb Z}(J_0,B_J^{(0)})$ have orders $12144$ and $24288$, respectively; in both cases the derived subgroup has order $6072$ and the center has order $2$. These groups act transitively on the vectors of norm $12$, with stabilizer orders $12$ and $48$, respectively. The computation also shows that every element of $\operatorname{Aut}_{\mathbb Z}(J,B_J)$ commutes with multiplication by $\alpha$, while $\operatorname{Aut}_{\mathcal O_F}(J_0)$ has index two in $\operatorname{Aut}_{\mathbb Z}(J_0,B_J^{(0)})$. 
In both cases, the conjugation action of the derived subgroup on its $24$ Sylow $23$-subgroups is faithful and identifies it with the usual fractional-linear action of $\operatorname{PSL}_2(23)$ on $\mathbb P^1(\mathbb F_{23})$. Hence the derived subgroup is isomorphic to $\operatorname{PSL}_2(23)$.

We now prove (i). Since every element of $\operatorname{Aut}_{\mathbb Z}(J,B_J)$ commutes with multiplication by $\alpha$, we have $\operatorname{Aut}_{\mathcal O_F}(J)=\operatorname{Aut}_{\mathbb Z}(J,B_J)$. Its center has order $2$, its derived subgroup is isomorphic to $\operatorname{PSL}_2(23)$, and $12144=2\cdot6072$. Since $\operatorname{PSL}_2(23)$ has trivial center, it follows that
$\operatorname{Aut}_{\mathcal O_F}(J)\cong C_2\times\operatorname{PSL}_2(23)$.

For (ii), an $\mathcal O_F$-linear map stabilizes $J$ if and only if it stabilizes $\overline{\mathfrak p}J=J_0$, since $\overline{\mathfrak p}$ is invertible. 
Therefore $\operatorname{Aut}_{\mathcal O_F}(J_0)=\operatorname{Aut}_{\mathcal O_F}(J)$
as subgroups of $\operatorname{GL}_F(JF)=\operatorname{GL}_F(J_0F)$.

Finally, we prove (iii). Since $J_0$ is principal, say $J_0=\beta\mathcal O_{\mathbb Q(\zeta_{23})}$, the map $x\mapsto(\beta/\bar\beta)\bar x$ gives a conjugate-linear involution of $J_0$. It induces the nontrivial outer automorphism of $\operatorname{PSL}_2(23)$, so the subgroup generated by $\operatorname{PSL}_2(23)$ and this involution is isomorphic to $\operatorname{PGL}_2(23)$. Together with the multiplication by \(-1\), this gives
$\operatorname{Aut}_{\mathbb Z}(J_0,B_J^{(0)})\cong C_2\times\operatorname{PGL}_2(23)$,
since both groups have order \(24288\).
Since $\operatorname{Aut}_{\mathcal O_F}(J_0)$ has index two in $\operatorname{Aut}_{\mathbb Z}(J_0,B_J^{(0)})$, every element of the latter is either $\mathcal O_F$-linear or conjugate-linear. Hence
$\operatorname{Aut}_{\mathbb Z}(J_0,B_J^{(0)})
=
\{g\in\operatorname{Aut}_{\mathbb Z}(J_0,B_J^{(0)})\mid
g\alpha g^{-1}\in\{\alpha,\overline{\alpha}\}\}$.
\end{proof}

\subsection{Rational irreducibility and primitivity}
\label{grp:sec:primitive} 

Recall $G_4 = \operatorname{Aut}_{\mathcal O_F}(E,h_4)$. 
The subgroup of the automorphism group of \(L\) induced by the two tensor
factors is
\[
\frac{
G_4
\times
\operatorname{Aut}_{\mathcal O_F}(J)
}{
\langle(-1,-1)\rangle
}
\cong
G_4
\times
\operatorname{PSL}_2(23),
\]
and the same statement holds with \(J_0\) and \(L_0\).
This group has order \(1457280\). 

Let \(G\) be a finite group and let \(\rho\colon G\to\operatorname{GL}(V)\) be a representation on a finite-dimensional \(\mathbb Q\)-vector space \(V\). The representation \(\rho\) is called primitive if there is no decomposition $V=V_1\oplus\cdots\oplus V_r$  with \(r>1\) and \(V_i\neq0\) such that, for every \(g\in G\) and every \(i\), one has \(\rho(g)V_i=V_j\) for some \(j\).

\begin{lemma}\label{grp:lem:characters}
The representation of
$[G_4,G_4]\times\operatorname{PSL}_2(23)$ on
$L\otimes_{\mathbb Z}\mathbb Q$ is irreducible, and the representation of
$G_4\times\operatorname{PSL}_2(23)$ on
$L\otimes_{\mathbb Z}\mathbb Q$ is primitive. Moreover,
$(L\otimes_{\mathbb Z}\mathbb Q)^{\operatorname{PSL}_2(23)}=0$
and
$\operatorname{End}_{\mathbb Q[\operatorname{PSL}_2(23)]}
(L\otimes_{\mathbb Z}\mathbb Q) =  \operatorname{End}_F(E \otimes_{\mathcal{O}_F} F)$.
The same statements hold with $L_0$ in place of $L$, since
$L\otimes_{\mathbb Z}\mathbb Q\cong L_0\otimes_{\mathbb Z}\mathbb Q$
as $\mathbb Q[G_4\times\operatorname{PSL}_2(23)]$-modules.
\end{lemma}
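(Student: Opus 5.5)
We plan to work with the decomposition $V:=L\otimes_{\mathbb Z}\mathbb Q=(E\otimes_{\mathcal O_F}F)\otimes_F(J\otimes_{\mathcal O_F}F)$, viewed as a $\mathbb Q$-space of dimension $88$. The action is $(g,k)\mapsto g\otimes k$. Write $V_E:=E\otimes_{\mathcal O_F}F$, which has $F$-dimension $4$, and $V_J:=J\otimes_{\mathcal O_F}F=\mathbb Q(\zeta_{23})$, which has $F$-dimension $11$.

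\emph{Step 1: $V_J$ as a $\PSL_2(23)$-module.} The group $\PSL_2(23)$ has two complex irreducible characters of degree $11$. Their values on the elements of order $23$ are $(-1\pm\sqrt{-23})/2$, so both have character field $F$, and they are Galois conjugate over $\mathbb Q$. The character of $\PSL_2(23)$ on $V_J$ (over $F$, degree $11$) has no trivial constituent, because the trivial character would require a fixed vector in $J$. One can see this, for example, from the Sylow $23$-subgroup, which acts as multiplication by $\zeta_{23}$ in the ideal picture and has no fixed vector. Since the nontrivial irreducibles of $\PSL_2(23)$ have degrees $11,11,22,22,23,24,\dots$, the character must be one of the two degree-$11$ characters $\chi$. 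Hence $V_J$ is absolutely irreducible over $F$, and as a $\mathbb Q$-representation of dimension $22$ it is irreducible with endomorphism algebra $F$.

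\emph{Step 2: the endomorphism ring and fixed points.} We have $V\cong V_J^{\oplus 4}$ as $F[\PSL_2(23)]$-modules. The $\mathbb Q$-endomorphisms must commute with $\alpha$, because $\alpha$ spans the centre $F=\operatorname{End}(V_J)$ of the isotypic component. Therefore $\operatorname{End}_{\mathbb Q[\PSL_2(23)]}(V)=M_4(F)=\operatorname{End}_F(V_E)$, and the fixed space $V^{\PSL_2(23)}$ is $0$.

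For irreducibility, $[G_4,G_4]\cong \operatorname{SL}_2(5)$ acts on $V_E$ $F$-linearly. We check from the character table of $\operatorname{SL}_2(5)$ (faithful characters of degrees $2,2,4,6$) that the action is absolutely irreducible. The alternative is a sum of two $2$-dimensional pieces with character field $\mathbb Q(\sqrt5)$, which is not contained in $F$, so they could not be defined over $F$ separately. Then $\chi_E\otimes\chi$ is absolutely irreducible over $F$ of degree $44$. Its character field is $F$, since the values of $\chi$ on $23$-elements already generate $F$. Consequently $V$ is $\mathbb Q$-irreducible with endomorphism algebra $F$.

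\emph{Step 3: primitivity.} Suppose $V=V_1\oplus\cdots\oplus V_r$ is a system of imprimitivity for $H:=G_4\times\PSL_2(23)$, with $r>1$. Let $N=\PSL_2(23)$ be the normal subgroup. By Step 1, $N$ has no subgroup of index $r$ for $1<r\le 88$ except index $24$, coming from the Borel action on $\mathbb P^1(\mathbb F_{23})$. Hence the permutation action of $N$ on the blocks is either trivial or of degree $24$; in the latter case the $V_i$ have dimension $88/24$, which is impossible. So $N$ stabilizes each $V_i$. Each $V_i$ is then a $\mathbb Q N$-submodule, so $\dim V_i$ is a multiple of $22$ and $r\in\{2,4\}$. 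The subspaces $V_i$ correspond to $F$-subspaces $W_i\subset V_E$ with $V_i=W_i\otimes V_J$; this uses $\operatorname{End}=M_4(F)$ and that $V_i$ is an $F$-subspace, which we verify. The $W_i$ then form an imprimitivity system for $G_4$ acting on $V_E$. Since $[G_4,G_4]=\operatorname{SL}_2(5)$ is perfect, it must fix the set of blocks pointwise for $r\le4$ (the only $r\le 4$ transitive actions of $\operatorname{SL}_2(5)$ are trivial). This contradicts the absolute irreducibility of $\operatorname{SL}_2(5)$ from Step 2. For $L_0$ the module isomorphism is immediate, because $J_0F=JF$ with the same $\PSL_2(23)$-action by Proposition~\ref{grp:prop:Jgroups}(ii).

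The main obstacle is Step 2: proving that the $F$-structure of $V_E$ is absolutely irreducible for $\operatorname{SL}_2(5)$. We would settle this by computing the character of $\operatorname{SL}_2(5)$ on $V_E$ directly from Proposition~\ref{grp:prop:Egroup}; if it fails, we would fall back to using the full group $G_4=2^-S_5$.
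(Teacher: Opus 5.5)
Your overall plan matches the paper. You realize $V_J$ as an absolutely irreducible $F$-representation and get $V\cong V_J^{4}$ with commutant $M_4(F)$. You then rule out imprimitivity using the normal subgroup $\PSL_2(23)$ and the perfectness of $\operatorname{SL}_2(5)$. However, Step~2 rests on a false claim.

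\textbf{The error in Step~2.} The restriction of $V_E$ to $[G_4,G_4]\cong\operatorname{SL}_2(5)$ is \emph{not} absolutely irreducible.
\begin{itemize}
\item As computed in the paper, its character on the classes $1,2,3,4,5a,5b,6,10a,10b$ is $(4,-4,-2,0,-1,-1,2,1,1)$. This is $\chi_2+\chi_2'$, the sum of the two faithful degree-$2$ characters. The faithful degree-$4$ character would take the value $1$, not $-2$, on elements of order $3$.
\item Your dichotomy argument says the only possible characters are $\chi_4$ and $\chi_2+\chi_2'$, and that $\chi_2,\chi_2'$ cannot be realized separately over $F$ because $\sqrt5\notin F$. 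That shows only that $V_E$ is irreducible \emph{over $F$}. It does not give absolute irreducibility, and the ``alternative'' you dismiss is exactly what occurs.
\item Consequently two of your claims are false: that $\chi_E\otimes\chi$ is absolutely irreducible of degree $44$ with character field $F$, and that $V$ has commutant $F$ for $[G_4,G_4]\times\PSL_2(23)$. Over $\mathbb C$, $V$ splits into the four distinct degree-$22$ constituents $\chi_2\psi,\chi_2'\psi,\chi_2\bar\psi,\chi_2'\bar\psi$, and the commutant is $F(\sqrt5)$.
\item Your fallback of passing to the full group $G_4$ does not rescue this. The statement concerns $[G_4,G_4]$. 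Moreover, in Step~3 it is precisely $[G_4,G_4]$ that you can force to fix every block. $G_4$ itself has a quotient of order $2$ and could a priori swap two blocks.
\end{itemize}

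\textbf{The repair.} Irreducibility of $V_E$ over $F$ under $\operatorname{SL}_2(5)$ is all you need.
\begin{itemize}
\item For irreducibility of $V$, use the correspondence you already invoke in Step~3. Since $V_J$ is absolutely irreducible and $F$ is the centre of $\operatorname{End}_{\mathbb Q[\PSL_2(23)]}(V)=M_4(F)$, every $\mathbb Q[\PSL_2(23)]$-submodule of $V$ has the form $W\otimes_F V_J$ with $W\subseteq V_E$ an $F$-subspace. A submodule for $[G_4,G_4]\times\PSL_2(23)$ therefore comes from an $\operatorname{SL}_2(5)$-stable $F$-subspace $W$, so $W\in\{0,V_E\}$.
\item Alternatively, argue as the paper does. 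The four constituents above are distinct, occur with multiplicity one, and form a single Galois orbit with character field $F(\sqrt5)$ of degree $4$ over $\mathbb Q$. Hence the $88$-dimensional rational module is irreducible.
\item The contradiction at the end of Step~3 likewise needs only that $\operatorname{SL}_2(5)$ fixes no proper nonzero $F$-subspace of $V_E$.
\end{itemize}

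\textbf{A smaller gap in Step~3.} Your claim that $N$ acts on the blocks either trivially or with degree $24$ needs more justification. First, $H$ is transitive on the blocks by the irreducibility just established, so $r\mid 88$. Second, the orbits of the normal subgroup $N$ all have the same size, which is $1$ or $24$. Since $24\nmid r$, they have size $1$.

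With these corrections your argument is essentially the paper's.
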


\begin{proof}
The group $\operatorname{PSL}_2(23)$ has two irreducible complex characters of degree $11$, say $\psi$ and $\bar\psi$. They are Galois conjugate and have character field $F$. Since $11$ is the least degree of a nontrivial complex representation of $\operatorname{PSL}_2(23)$, the representation $J\otimes_{\mathcal O_F}\mathbb C$ is irreducible and affords either $\psi$ or $\bar\psi$, according to the embedding $F\hookrightarrow\mathbb C$. Hence
$J\otimes_{\mathbb Z}\mathbb C\cong\psi\oplus\bar\psi$,
so $J\otimes_{\mathbb Z}\mathbb Q$ is an irreducible $\mathbb Q[\operatorname{PSL}_2(23)]$-module. 
Scalar multiplication by $F$ commutes with the action of $\operatorname{PSL}_2(23)$, and hence
$F\subset \operatorname{End}_{\mathbb Q[\operatorname{PSL}_2(23)]}(J\otimes_{\mathbb Z}\mathbb Q)$.
Moreover,
$\operatorname{End}_{\mathbb Q[\operatorname{PSL}_2(23)]}(J\otimes_{\mathbb Z}\mathbb Q)\otimes_{\mathbb Q}\mathbb C
\cong\mathbb C\oplus\mathbb C$.
Therefore $\operatorname{End}_{\mathbb Q[\operatorname{PSL}_2(23)]}(J\otimes_{\mathbb Z}\mathbb Q)=F$.

It follows from Proposition~\ref{grp:prop:Egroup} that $[G_4,G_4]\cong\operatorname{SL}_2(5)$. The restriction to $[G_4,G_4]$ of the degree-$4$ character afforded by $E$ is the sum of the two faithful irreducible characters of degree $2$ of $\operatorname{SL}_2(5)$, which are conjugate over $\mathbb Q(\sqrt5)$. 
Indeed, on the nine conjugacy classes of $\operatorname{SL}_2(5)$, ordered as $1,2,3,4,5a,5b,6,10a,10b$, the class sizes and the values of the restricted character are
\[
(1,1,20,30,12,12,20,12,12)
\quad\text{and}\quad
(4,-4,-2,0,-1,-1,2,1,1),
\]
respectively. The inner product of this character with itself is $2$, and its inner product with the trivial character is $0$. Since $\operatorname{SL}_2(5)$ has no nontrivial one-dimensional characters, the restriction is the sum of two distinct irreducible characters of degree $2$. These are precisely the two faithful degree-$2$ characters of $\operatorname{SL}_2(5)$.

It follows that the representation $L\otimes_{\mathbb Z}\mathbb C$ of $[G_4,G_4]\times\operatorname{PSL}_2(23)$ decomposes as the direct sum of the four tensor products of the two faithful degree-$2$ characters of $\operatorname{SL}_2(5)$ with the two degree-$11$ characters of $\operatorname{PSL}_2(23)$. These four constituents are distinct, have degree $22$, occur with multiplicity one, and form a single Galois orbit with character field $F(\sqrt5)$, which has degree $4$ over $\mathbb Q$. Hence $L\otimes_{\mathbb Z}\mathbb Q$ is an irreducible $[G_4,G_4]\times\operatorname{PSL}_2(23)$-representation.

Furthermore,
$L\otimes_{\mathbb Z}\mathbb Q\cong(J\otimes_{\mathbb Z}\mathbb Q)^4$
as a $\mathbb Q[\operatorname{PSL}_2(23)]$-module. Hence
$(L\otimes_{\mathbb Z}\mathbb Q)^{\operatorname{PSL}_2(23)}=0$
and
$\operatorname{End}_{\mathbb Q[\operatorname{PSL}_2(23)]}
(L\otimes_{\mathbb Z}\mathbb Q) = \operatorname{End}_F(E \otimes_{\mathcal{O}_F} F)$. 

Suppose that $G_4 \times\operatorname{PSL}_2(23)$ preserves a decomposition
$L\otimes_{\mathbb Z}\mathbb Q=V_1\oplus\cdots\oplus V_b$ with $b>1$ and permutes the subspaces $V_i$. By irreducibility, the action on $\{V_1,\ldots,V_b\}$ is transitive, and hence $b\mid88$. 
Since $\operatorname{PSL}_2(23)$ is normal in $G_4 \times\operatorname{PSL}_2(23)$, its orbits on $\{V_1,\ldots,V_b\}$ all have the same size. By Dickson's subgroup classification (see also~\cite{ATLASPSL223}), the only index of a proper subgroup of $\operatorname{PSL}_2(23)$ not exceeding $88$ is $24$. Hence every nontrivial orbit would have size $24$, which is impossible since $24\nmid b$ and $b\mid88$. Therefore $\operatorname{PSL}_2(23)$ fixes each $V_i$. 
Each $V_i$ is thus a $\mathbb Q[\operatorname{PSL}_2(23)]$-module, so its dimension is divisible by $22$. Since the $V_i$ have equal dimension and their direct sum has dimension $88$, it follows that $b\mid4$.  
The perfect group $[G_4, G_4] \cong\operatorname{SL}_2(5)$ acts trivially on a set of at most four blocks, since every subgroup of
\(S_4\) is solvable.
Each block would therefore be invariant under $[G_4, G_4] \times\operatorname{PSL}_2(23)$, contradicting the  irreducibility proved above.
\end{proof}

\subsection{\texorpdfstring{Centralizers and normality of \(\operatorname{PSL}_2(23)\)}{Centralizers and normality of PSL2(23)}}
\label{grp:sec:normality} 

\begin{lemma}\label{grp:lem:centralizer}
We have 
\[
C_{\operatorname{Aut}_{\mathbb Z}(L,B)}
\bigl(\operatorname{PSL}_2(23)\bigr)
= G_4, \qquad 
C_{\operatorname{Aut}_{\mathbb Z}(L_0,B_0)}
\bigl(\operatorname{PSL}_2(23)\bigr)
=
G_4.
\]
\end{lemma}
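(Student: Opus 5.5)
The plan is to compute the centralizer inside the rational endomorphism ring and then cut it down using integrality and the form. Let $C:=C_{\operatorname{Aut}_{\mathbb Z}(L,B)}(\operatorname{PSL}_2(23))$. Every $g\in C$ extends to a $\mathbb Q$-linear automorphism of $L\otimes_{\mathbb Z}\mathbb Q$ commuting with $\operatorname{PSL}_2(23)$. Lemma~\ref{grp:lem:characters} gives
\[
\operatorname{End}_{\mathbb Q[\operatorname{PSL}_2(23)]}(L\otimes_{\mathbb Z}\mathbb Q)=\operatorname{End}_F(E\otimes_{\mathcal O_F}F),
\]
acting on the first tensor factor. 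In particular $F$ sits in the center of this endomorphism ring, so $g$ commutes with multiplication by $\alpha$. Hence $g$ is $\mathcal O_F$-linear and of the form $\varphi\otimes\mathrm{id}_J$ for a unique $\varphi\in\operatorname{GL}_F(E\otimes F)$.

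Next I show that $\varphi$ preserves $h_4$. Since $g$ is $\mathcal O_F$-linear and preserves $B=\operatorname{Tr}_{F/\mathbb Q}h$, the formula recovering $h$ from $\operatorname{Tr}_{F/\mathbb Q}h$ at the start of \S\ref{grp:sec:factors} shows that $g$ preserves $h=h_4\otimes h_{11}$. Fix $x\in J$ with $h_{11}(x,x)=c\neq0$. Then $h(\varphi u\otimes x,\varphi v\otimes x)=c\,h_4(\varphi u,\varphi v)$ and also equals $c\,h_4(u,v)$, so $\varphi$ is an $F$-linear isometry of $h_4$.

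Then I show that $\varphi(E)=E$. Here $g(L)=L$ means $\varphi(E)\otimes_{\mathcal O_F}J=E\otimes_{\mathcal O_F}J$. Choose an $\mathcal O_F$-linear functional $\lambda\colon J\to\mathcal O_F$ with $\lambda(J)=\mathcal O_F$; one exists because $J$ is projective of rank $11\ge2$ and so contains a free summand. Applying $\mathrm{id}\otimes\lambda$ to both sides gives $\varphi(E)=E$. Thus $\varphi\in G_4$, which proves $C\subseteq G_4$ (the elements $\varphi\otimes 1$). The reverse inclusion is clear, since $G_4\otimes1$ commutes with $1\otimes\operatorname{PSL}_2(23)$.

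For $L_0$ the same argument applies verbatim, because Lemma~\ref{grp:lem:characters} holds for $L_0$. The main subtlety there is that $\operatorname{Aut}_{\mathbb Z}(J_0)$ contains conjugate-linear elements. These do not matter, because the centralizer is computed in $\operatorname{End}_{\mathbb Q[\operatorname{PSL}_2(23)]}$, which forces $F$-linearity. The step I would check most carefully is the identification of the centralizer as $\operatorname{End}_F(E\otimes F)\otimes1$ with $F$ central. That comes from $\operatorname{End}_{\mathbb Q[\operatorname{PSL}_2(23)]}(J\otimes\mathbb Q)=F$, which is already established in Lemma~\ref{grp:lem:characters}.
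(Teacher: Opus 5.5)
Your proof is correct and follows the same strategy as the paper: both identify the centralizer inside $\operatorname{End}_{\mathbb Q[\PSL_2(23)]}(L\otimes_{\mathbb Z}\mathbb Q)=\operatorname{End}_F(E\otimes_{\mathcal O_F}F)$ using Lemma~\ref{grp:lem:characters}. The differences are only in the details. The paper gets integrality by noting that $\operatorname{End}_{\mathbb Z[\PSL_2(23)]}(M)$ is an order in $F$ containing $\mathcal O_F$, hence equal to $\mathcal O_F$, and then uses that $E$ is free. You instead stay rational and recover $\varphi(E)=E$ by applying $\mathrm{id}\otimes\lambda$ for a surjection $\lambda\colon J\to\mathcal O_F$, which does not need $E$ to be free. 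You also spell out the verification that $\varphi$ preserves $h_4$, which the paper leaves implicit.
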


\begin{proof}
Let $M\in\{J,J_0\}$. 
By Lemma~\ref{grp:lem:characters}, $\operatorname{End}_{\Q[\operatorname{PSL}_2(23)]}(M\otimes_{\Z}\Q)=F$.
Thus $\operatorname{End}_{\Z[\operatorname{PSL}_2(23)]}(M)$ is an order in $F$ containing the maximal order $\mathcal O_F$, and hence equals $\mathcal O_F$.
Since $E$ is free over $\mathcal O_F$, it follows that
\[
\operatorname{End}_{\Z[\operatorname{PSL}_2(23)]}(E\otimes_{\mathcal O_F}M)
=\operatorname{End}_{\mathcal O_F}(E),
\]
under the natural identification $T\mapsto T\otimes1$.
Consequently, every element of the centralizer acts as $T\otimes1$ with $T\in\operatorname{GL}_{\mathcal O_F}(E)$.
Both centralizers are therefore $G_4$.
\end{proof}

Recall that the Fitting subgroup \(F(G)\) of a finite group \(G\) is its largest nilpotent normal subgroup. 

\begin{lemma}\label{grp:lem:fitting}
The Fitting subgroups of $\operatorname{Aut}_{\mathbb Z}(L,B)$ and $\operatorname{Aut}_{\mathbb Z}(L_0,B_0)$ are both equal to $\langle -1\rangle$. 
\end{lemma}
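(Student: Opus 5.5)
We will show that the Fitting subgroup $N:=F(\operatorname{Aut}_{\mathbb Z}(L,B))$ equals $\langle -1\rangle$; the case of $L_0$ is identical, since every ingredient below (Lemma~\ref{grp:lem:characters}, Lemma~\ref{grp:lem:centralizer}, and the rank of the lattice) holds verbatim for $L_0$. Put $G:=\operatorname{Aut}_{\mathbb Z}(L,B)$ and $H:=G_4\times\operatorname{PSL}_2(23)\subset G$.

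\emph{Step 1: every characteristic abelian subgroup of $N$ is central of order at most $2$.} Let $A$ be a characteristic abelian subgroup of $N$; it is normal in $G$. We decompose $V:=L\otimes_{\mathbb Z}\mathbb Q$ into homogeneous components (Clifford/Wedderburn) for $\mathbb Q[A]$. These components are permuted by $H$, so primitivity of $H$ (Lemma~\ref{grp:lem:characters}) forces $V$ to be $A$-homogeneous. Hence $\mathbb Q[A]$ acts through a single field $K_A$, and $V$ is a $K_A$-vector space. The group $\operatorname{PSL}_2(23)$ acts on $A$ by conjugation and therefore on the cyclic group $A$ (a finite subgroup of $K_A^\times$) by automorphisms. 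Since $\operatorname{Aut}(A)$ is abelian and $\operatorname{PSL}_2(23)$ is perfect, this action is trivial. Thus $A\subset C_G(\operatorname{PSL}_2(23))=G_4$ by Lemma~\ref{grp:lem:centralizer}. The same argument applied to $G_4$ shows that $A$ is centralized by $G_4$ as well, since the action of the perfect group $[G_4,G_4]\cong\operatorname{SL}_2(5)$ on $A$ is trivial and $G_4$ is generated by $[G_4,G_4]$ together with an element we may handle via homogeneity. So $A$ is an abelian normal subgroup of $G_4=2^-S_5$, which means $A\le Z(G_4)=\langle-1\rangle$, because $S_5$ has no nontrivial abelian normal subgroup.

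\emph{Step 2: deduce $N=\langle -1\rangle$.} By Step 1 we know that $N$ is nilpotent and every characteristic abelian subgroup of $N$ is contained in $\langle -1\rangle$. In particular the odd part of $N$ is trivial: for odd $p$, the last nontrivial term of the lower central series of $O_p(N)$ would be a characteristic abelian $p$-subgroup, hence trivial. So $N$ is a $2$-group all of whose characteristic abelian subgroups are cyclic. By P.~Hall's theorem, $N$ is of symplectic type, i.e.\ $N$ is a central product $E\circ R$ with $E$ extraspecial of order $2^{1+2m}$ and $R$ cyclic, dihedral, semidihedral or generalized quaternion. In the latter three cases $R$ contains a characteristic cyclic subgroup of order at least $4$, contradicting Step 1; so $R$ is cyclic, hence $R\le\langle-1\rangle$. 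If $m\ge1$, then $G$ acts on $N/Z(N)\cong\mathbb F_2^{2m}$. Since $\operatorname{PSL}_2(23)$ centralizes $N$, this forces $N\le G_4$ by Lemma~\ref{grp:lem:centralizer}, and $N$ would then be a normal $2$-subgroup of $G_4$ strictly larger than $\langle -1\rangle$. This is impossible, because $O_2(2^-S_5)=\langle-1\rangle$.

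\emph{Main obstacle.} The delicate point is the case $m\ge1$: we must justify that $\operatorname{PSL}_2(23)$ centralizes $N$. We will argue via the faithful representation of $E$: the $2^m$-dimensional faithful representation implies $2^m\mid 88$ up to the field, so $m\le 3$. The action of $\operatorname{PSL}_2(23)$ on $N/Z(N)$ lands in $\operatorname{Sp}_{2m}(2)$ with $2m\le 6$, whose order $1451520$ is not divisible by $23$ (and likewise for $\operatorname{O}^\pm_{2m}(2)$). Hence this action is trivial. Consequently $[\operatorname{PSL}_2(23),N]\le Z(N)$, and the three subgroups lemma together with perfectness of $\operatorname{PSL}_2(23)$ gives $[\operatorname{PSL}_2(23),N]=1$. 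Similar care is needed in Step 1 for the claim that $G_4$ centralizes $A$; if the homogeneity argument there is not clean, we would instead simply use that $A\le G_4$ is normal in $G_4$ and invoke $O_p(2^-S_5)$ directly, which suffices.
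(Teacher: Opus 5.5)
Your proposal is correct. It uses the same ingredients as the paper: homogeneity from the primitivity in Lemma~\ref{grp:lem:characters}, Hall's theorem on groups of symplectic type, Lemma~\ref{grp:lem:centralizer}, and the fact that $S_5$ has no nontrivial normal $p$-subgroups. The difference is in how you organize them.

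The paper extracts from homogeneity only that abelian normal subgroups are cyclic, and then treats each $O_p$ separately:
\begin{itemize}
\item for odd $p$ it needs Hall's theorem together with a degree and character-field count ($p(p-1)>88$ when $p=11$) to show $O_p$ is cyclic;
\item for $p=2$ it bounds the number of generators of $O_2$ by $8$ and uses the Burnside basis theorem with $23\nmid|\GL_8(2)|$.
\end{itemize}
You instead feed cyclicity straight into Lemma~\ref{grp:lem:centralizer} and $G_4/Z(G_4)\cong S_5$. This gives the stronger fact that every abelian normal subgroup of $G$ lies in $\langle-1\rangle$. That kills all odd primes at once, via the centre (or last lower-central term) of $O_p(N)$, with no Hall theorem needed there. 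For $p=2$ it cuts Hall's classification down to $N$ extraspecial, where $m\le 3$, $23\nmid|\operatorname{Sp}_6(2)|$ and the three subgroups lemma finish the job. Your route is thus a bit shorter and avoids the generator count. The paper's route establishes the centralization of each $O_p$ directly, without first locating abelian normal subgroups inside $G_4$.

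Two small repairs are needed.

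First, delete the sentence claiming that $G_4$ centralizes $A$; as written it is unjustified, and it is unnecessary. From $A\trianglelefteq G$ and $A\le G_4$ you already get $A\trianglelefteq G_4$, hence $A\le Z(G_4)=\langle-1\rangle$, which is the fallback you yourself mention.

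Second, in excluding the dihedral, semidihedral and generalized quaternion cases, a characteristic subgroup of $R$ need not be characteristic in $N$, because $R$ itself need not be characteristic in $N$. Use the derived subgroup of $N$ instead. Since $E'=Z(E)=Z(R)\le R'$ when $|R|\ge 16$, you have $N'=E'R'=R'$. This is characteristic in $N$, abelian, and cyclic of order $|R|/4\ge 4$, which contradicts Step~1.
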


\begin{proof}
We prove the assertion for
\(\operatorname{Aut}_{\mathbb Z}(L,B)\); the same proof applies to
\(\operatorname{Aut}_{\mathbb Z}(L_0,B_0)\). 
Let \(H\) be a normal subgroup of \(\operatorname{Aut}_{\mathbb Z}(L,B)\). Since \(H\) is normal, \(\operatorname{Aut}_{\mathbb Z}(L,B)\) permutes the  isotypic components of \(L\otimes_{\mathbb Z}\mathbb Q\) for \(H\). 
It follows from Lemma \ref{grp:lem:characters} that there is only one such component. 
In particular, if \(H\) is abelian, then \(L\otimes_{\mathbb Z}\mathbb Q\) is a direct sum of copies of a faithful irreducible  \(\mathbb{Q}[H]\)-module \(V\). Since \(H\) is abelian, the representation \(H\hookrightarrow\operatorname{GL}(V)\) identifies \(H\) with a finite subgroup of the multiplicative group of a number field. Hence \(H\) is cyclic. Therefore every abelian normal subgroup of \(\operatorname{Aut}_{\mathbb Z}(L,B)\) is cyclic. 

For a prime \(p\), let \(O_p(G)\) denote the largest normal \(p\)-subgroup of a finite group \(G\). Every abelian characteristic subgroup of \(O_p(\operatorname{Aut}_{\mathbb Z}(L,B))\) is therefore cyclic. 
Suppose that \(p\) is odd. By Hall's theorem~\cite[p.~357]{Huppert1967}, if \(O_p(\operatorname{Aut}_{\mathbb Z}(L,B))\) is noncyclic, then it is a central product of an extraspecial \(p\)-group and a cyclic \(p\)-group. 
After extension to \(\mathbb C\), any faithful irreducible rational representation of such a group contains an irreducible complex representation of degree divisible by \(p\), with character field of degree divisible by \(p-1\). 
Since its rational degree divides \(88\), the only possible prime is \(p=11\). But in this case its rational degree is at least \(11\cdot10=110\), a contradiction. 
Hence \(O_p(\operatorname{Aut}_{\mathbb Z}(L,B))\) is cyclic for every odd prime \(p\). Its automorphism group is therefore abelian, so the perfect group \(\operatorname{PSL}_2(23)\) centralizes it.

For \(p=2\), let
\(P=O_2(\operatorname{Aut}_{\mathbb Z}(L,B))\).
Since \(P\) is normal, the argument above shows that the restriction of
\(L\otimes_{\mathbb Z}\mathbb Q\) to \(P\) is a direct sum of copies of a single faithful irreducible rational \(P\)-representation \(V\).
Every irreducible complex \(P\)-representation occurring in
\(V\otimes_{\mathbb Q}\mathbb C\)
is faithful and has degree \(2^a\), since \(P\) is a \(2\)-group.
Since \(2^a\) divides \(88\), we have \(a\le3\). 
Hall's classification shows that \(P\) can be generated by at most \(2a+2\le8\) elements. By the Burnside basis theorem, the kernel of the natural homomorphism
\(\operatorname{Aut}(P)\to\operatorname{Aut}(P/\Phi(P))\)
is a \(2\)-group, while its image embeds into \(\operatorname{GL}_8(2)\). Since the multiplicative order of \(2\) modulo \(23\) is \(11\), none of \(2^i-1\) for \(1\le i\le8\) is divisible by \(23\). Hence \(23\nmid|\operatorname{Aut}(P)|\). Since \(\operatorname{PSL}_2(23)\) is simple, its conjugation action on \(P\) is therefore trivial.

Consequently every normal \(p\)-subgroup of
\(\operatorname{Aut}_{\mathbb Z}(L,B)\) is contained in $C_{\operatorname{Aut}_{\mathbb Z}(L,B)}
\bigl(\operatorname{PSL}_2(23)\bigr)
=
G_4$. 
The Fitting subgroup is therefore a nilpotent normal subgroup of
\(G_4\).
Since $G_4/Z(G_4) \cong S_5$ and \(S_5\) has trivial Fitting subgroup, 
$F\bigl(\operatorname{Aut}_{\mathbb Z}(L,B)\bigr) \subset Z(G_4)=\langle-1\rangle$. 
The reverse inclusion follows because \(-1\) is central.
\end{proof}

\begin{lemma}\label{grp:lem:list}
Suppose that \(Q\) is quasisimple, \(23\mid |Q/Z(Q)|\), and \(Q\) admits a faithful irreducible complex representation \(\rho\) whose degree divides \(88\). Then the possibilities are
\[
\begin{array}{c|c}
Q & \deg(\rho) \\ \hline
\operatorname{PSL}_2(23) & 11,22\\
\operatorname{SL}_2(23) & 22\\
M_{23} & 22
\end{array}
\qquad
\begin{array}{c|c}
Q & \deg(\rho) \\ \hline
A_{23} & 22\\
A_{45} & 44\\
A_{89} & 88
\end{array}
\]
Here \(M_{23}\) denotes the Mathieu group. 
\end{lemma}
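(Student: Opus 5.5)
The plan is to reduce the lemma to the classification of quasisimple groups with small faithful representations, using the lists of Hiss--Malle~\cite{HissMalle2001,HissMalle2002}. These lists record, for every quasisimple group $Q$, all faithful irreducible complex representations of degree at most $250$. Since $\deg(\rho)\mid 88$ forces $\deg(\rho)\in\{1,2,4,8,11,22,44,88\}$, all candidates lie in that range. Degree $1$ is impossible because $Q$ is perfect, so it suffices to scan these tables for degrees $2,4,8,11,22,44,88$ and keep the entries with $23\mid|Q/Z(Q)|$.

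Before scanning, I would cut down the candidates by hand, using the classification of finite simple groups.

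\emph{Alternating groups.} $A_n$ with $n\ge 23$ has minimal nontrivial degree $n-1$, realised by the deleted permutation module. The next smallest degrees are of order $n(n-3)/2$, which exceeds $88$ for $n\ge 23$. The covers $2.A_n$ have spin representations of degree $2^{\lfloor (n-2)/2\rfloor}$, which is far larger. So we need $n-1\in\{22,44,88\}$, giving $A_{23}$, $A_{45}$ and $A_{89}$.

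\emph{Groups of Lie type in characteristic $23$.} By the Landazuri--Seitz bounds, only rank one groups over $\mathbb F_{23}$ are small enough, namely $\operatorname{PSL}_2(23)$ and $\operatorname{SL}_2(23)$. For $\operatorname{PSL}_2(23)$ the degrees are $11$ (the characters $\psi,\bar\psi$), $22$, $23$ and $24$. For $\operatorname{SL}_2(23)$ the faithful degrees are $12$, $22$ and $24$, the two half-discrete-series characters having degree $(q+1)/2=12$. Intersecting with the divisors of $88$ gives the rows in the table.

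\emph{Lie type in other characteristic $p$, with $23\mid|Q|$.} Here $23$ must divide some cyclotomic factor $q^i-1$. Since $\operatorname{ord}_{23}(2)=11$, this forces large rank, for example $\operatorname{PSL}_{11}(2)$. The Landazuri--Seitz lower bounds, roughly $q^{\,n-1}-1$, then far exceed $88$. The same holds for $\operatorname{PSL}_2(q)$ with $q\equiv\pm1\pmod{23}$ and $q\neq 23$, such as $q=47$: there the minimal degree is at least $(q-1)/2=23$, and the degrees $(q\pm1)/2$, $q\pm1$, $q$ must divide $88$. Checking $q=47,137,\dots$, one finds $(q-1)/2=23$, $(q+1)/2=24$ and $q-1=46$, none of which divide $88$. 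Larger $q$ give degrees exceeding $88$, except where $q\pm1$ or $(q\pm1)/2$ equals $44$ or $88$; those would need $q\in\{43,45,87,89,\dots\}$ with $23\mid q^2-1$, which fails.

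\emph{Sporadic groups.} Only $M_{23}$, $M_{24}$, $Co_2$, $Co_3$, $Co_1$, $Fi_{23}$, $J_4$, $B$ and $M$ have order divisible by $23$. Of these, only $M_{23}$ (degree $22$) and $M_{24}$ (degree $23$) have faithful degrees below $89$, together with $Co_3$ (degree $23$) and $Co_2$ (degree $23$). Only $M_{23}$'s $22$ divides $88$. The other nontrivial degrees of $M_{23}$ are $45,230,\dots$, so $22$ is its only admissible degree.

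The main obstacle is the case of Lie type in cross characteristic, where one must be sure that no exotic small representation sneaks in. For this I would rely directly on the Hiss--Malle tables rather than on generic bounds, filtering the degrees $\le 88$ by the condition $23\mid|Q/Z(Q)|$. I would also check covering groups with exceptional Schur multipliers, but these have orders not divisible by $23$, apart from those already covered.
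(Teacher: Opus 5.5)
Your proposal is correct and follows essentially the paper's argument. You invoke the Hiss--Malle classification for the degrees $2,4,8,11,22,44,88$, treat the alternating family (forcing $n-1\mid 88$, so $n=23,45,89$) and the $L_2(q)$ family (forcing $q=23$) by hand, and read off $M_{23}$ in degree $22$ as the only remaining case with $23\mid|Q/Z(Q)|$. Your supplementary hand check of the sporadic groups has two harmless slips: $\mathrm{Fi}_{24}'$ also has order divisible by $23$, and $2.\mathrm{Co}_1$ has a faithful representation of degree $24<89$. Neither contributes a degree dividing $88$, so the conclusion is unaffected.
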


\begin{proof}
Apply Hiss--Malle's theorem~\cite{HissMalle2001}, together with its corrigendum~\cite{HissMalle2002}.  Since the degree of \(\rho\) divides \(88\), only the degrees \(2,4,8,11,22,44,88\) need to be considered. Among the nongeneric cases, the only group whose simple quotient has order divisible by \(23\) is \(M_{23}\), occurring in degree \(22\). 

For the \(L_2(q)\) families, the relevant degrees are \(q\), \(q\pm1\), and \((q\pm1)/2\), subject to the usual conditions on the group and its covering groups. Requiring such a degree to divide \(88\) and \(23\mid q(q^2-1)\) gives \(q=23\). The resulting faithful representations of \(\operatorname{PSL}_2(23)\) and \(\operatorname{SL}_2(23)\) have the degrees listed above.

Finally, suppose that \(Q/Z(Q)\cong A_n\). Since \(23\mid |A_n|\), we have \(n\ge23\). For \(n\ge23\), the smallest nontrivial irreducible complex degree is \(n-1\), while the next possible degree is at least \(n(n-3)/2>88\). The nontrivial double covers contribute no additional cases, since for \(n\ge23\) there is no faithful spin representation of degree at most \(88\). Thus \(n-1\mid88\), which gives \(n=23,45,89\). 
\end{proof}

\begin{proposition}\label{grp:prop:Nnormal}
The subgroup $\operatorname{PSL}_2(23)$ is normal in both
$\operatorname{Aut}_{\mathbb Z}(L,B)$ and
$\operatorname{Aut}_{\mathbb Z}(L_0,B_0)$.
    
\end{proposition}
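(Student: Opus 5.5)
We follow the strategy of \cite[Section~6]{Nebe2013Automorphisms} through the generalized Fitting subgroup. Write $A:=\operatorname{Aut}_{\mathbb Z}(L,B)$; the case of $L_0$ is identical, since Lemmas~\ref{grp:lem:characters}, \ref{grp:lem:centralizer} and~\ref{grp:lem:fitting} hold for both lattices. Put $F^*(A)=F(A)E(A)$. By Lemma~\ref{grp:lem:fitting}, $F(A)=\langle-1\rangle$ is central, so $F^*(A)$ is a central product of $\langle -1\rangle$ with the components $Q_1,\dots,Q_k$ of $A$. Moreover $C_A(F^*(A))\subset F^*(A)$, and hence $E(A)\neq 1$, since otherwise $A=C_A(\langle -1\rangle)\subset\langle-1\rangle$. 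Our aim is to show that $\operatorname{PSL}_2(23)$ is one of the $Q_i$. Once this is done it is subnormal and quasisimple, and therefore a component. Conjugation by $A$ permutes the components, so it remains to show that no other component is conjugate to $\operatorname{PSL}_2(23)$.

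\emph{Step 1: $\operatorname{PSL}_2(23)$ normalizes each $Q_i$.} The group $\operatorname{PSL}_2(23)$ acts by conjugation on the set $\{Q_1,\dots,Q_k\}$. The components pairwise commute and each acts nontrivially on $V:=L\otimes\mathbb Q$, so a tensor-decomposition argument on an irreducible constituent bounds $k$ by roughly $\log_2 88$, hence $k\le 6$. The only transitive actions of $\operatorname{PSL}_2(23)$ have degree $1$ or at least $24$ (Dickson, as in Lemma~\ref{grp:lem:characters}). Therefore $\operatorname{PSL}_2(23)$ fixes every $Q_i$.

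\emph{Step 2: locate $\operatorname{PSL}_2(23)$.} Suppose $\operatorname{PSL}_2(23)\not\subset F^*(A)$. Let $X:=\operatorname{PSL}_2(23)F^*(A)$. Then $X/F^*(A)$ is a nontrivial quotient of the simple group $\operatorname{PSL}_2(23)$, so it is isomorphic to $\operatorname{PSL}_2(23)$, and it embeds in $\operatorname{Out}(F^*(A))$. Since $\operatorname{PSL}_2(23)$ fixes each $Q_i$ by Step~1, some $Q_i$ would admit a nontrivial outer action of $\operatorname{PSL}_2(23)$, so $23\mid|\operatorname{Out}(Q_i/Z(Q_i))|$. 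We rule this out using the degree bound. Every faithful irreducible complex constituent of $V|_{Q_i}$ has degree at most $88$, and by Landazuri--Seitz type bounds a quasisimple group of Lie type with $23\mid|\operatorname{Out}|$ (needing field automorphisms of order $23$, so $q\ge 2^{23}$) has no such small representation. Alternating and sporadic groups have outer automorphism groups of order at most $4$. This contradiction gives $\operatorname{PSL}_2(23)\subset F^*(A)$, and since $\operatorname{PSL}_2(23)$ is perfect, $\operatorname{PSL}_2(23)\subset E(A)$. Its image in each $Q_i/Z(Q_i)$ is either trivial or isomorphic to $\operatorname{PSL}_2(23)$. Choose a $Q:=Q_i$ on which the image is nontrivial. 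Then $23\mid|Q/Z(Q)|$, and the faithful constituents of $V|_Q$ have degree dividing $88$ by Clifford theory, since $Q\trianglelefteq A$ and $V$ is $A$-irreducible. Lemma~\ref{grp:lem:list} then leaves $Q\in\{\operatorname{PSL}_2(23),\operatorname{SL}_2(23),M_{23},A_{23},A_{45},A_{89}\}$.

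\emph{Step 3: eliminate the larger groups and conclude.} This is the main obstacle. First, $C_A(Q)$ contains the commuting components and is normalized by $\operatorname{PSL}_2(23)$. For $Q\in\{M_{23},A_{23},A_{45},A_{89}\}$, the rational representation $V|_Q$ would be a multiple of a rational $22$-, $44$- or $88$-dimensional module. In each case we compare with Lemma~\ref{grp:lem:characters}: $\operatorname{End}_{\mathbb Q[\operatorname{PSL}_2(23)]}(V)$ is $\operatorname{End}_F(E\otimes F)$, which is a matrix algebra over $F$, whereas the restriction of the deleted permutation module of $A_n$ (or of $M_{23}$) to $\operatorname{PSL}_2(23)$ has nonzero fixed points. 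This contradicts $V^{\operatorname{PSL}_2(23)}=0$. The $22$-dimensional module of $M_{23}$ restricts to $\operatorname{PSL}_2(23)$ as a rational module containing the trivial character, and this case is handled the same way. The case $\operatorname{SL}_2(23)$ is excluded because $-1\in Z(Q)$ would force $\operatorname{PSL}_2(23)\subset Q$ to lift, but $\operatorname{SL}_2(23)$ does not split over its center. Hence $Q=\operatorname{PSL}_2(23)$ is a component, and the components of $A$ are exactly our $\operatorname{PSL}_2(23)$ together with components centralizing it. By Lemma~\ref{grp:lem:centralizer}, those others lie in $G_4$, whose only quasisimple subgroup is $\operatorname{SL}_2(5)$. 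So $\operatorname{PSL}_2(23)$ is the unique component isomorphic to $\operatorname{PSL}_2(23)$, and it is therefore normal in $A$.
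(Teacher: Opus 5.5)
There is a genuine gap in Step~3, where you write ``Hence $Q=\operatorname{PSL}_2(23)$ is a component.'' Your overall route matches the paper's: $F^*(A)=\pm Q_1\cdots Q_k$, too few components for $\operatorname{PSL}_2(23)$ to permute them, small outer automorphism groups forcing $\operatorname{PSL}_2(23)\subset E(A)$, then Lemma~\ref{grp:lem:list}. But in Step~2 you only chose \emph{some} component $Q$ onto which $P:=\operatorname{PSL}_2(23)$ projects nontrivially. Nothing you say prevents $P$ from also projecting nontrivially onto a second component, i.e.\ $P$ sitting diagonally in $QQ'$ modulo centers (say with $Q\cong Q'\cong\operatorname{PSL}_2(23)$). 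In that case $P$ is not a component, and your final ``unique component isomorphic to $\operatorname{PSL}_2(23)$'' argument says nothing about $P$ itself. Your Step~3 eliminations also presuppose $P\subseteq Q$: the fixed points of $P$ on the deleted permutation module, and the non-splitting of $\operatorname{SL}_2(23)$ over its center. The missing step is the paper's. By Lemma~\ref{grp:lem:list}, every component onto which $P$ projects nontrivially has faithful constituents of degree at least $11$. The constituent degree of $E(A)$ is the product of the factor degrees and divides $88<11^2$, so there is exactly one such component. Since $P$ is perfect and its other projections are trivial, $P\subseteq Q$, and then $P=Q$ once $Q\cong\operatorname{PSL}_2(23)$.

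There are also some smaller issues. In Step~1, bounding $k$ requires every component to act nontrivially on a \emph{single} complex constituent of $V|_{E(A)}$. Knowing only that each $Q_i$ acts nontrivially on $V$ does not give this: if the homogeneous components of $V|_{E(A)}$ formed a system of imprimitivity, different constituents could see different $Q_i$. You need homogeneity of $V|_{E(A)}$, which follows from the primitivity of $G_4\times\operatorname{PSL}_2(23)$ in Lemma~\ref{grp:lem:characters}; with it, $88=2^3\cdot 11$ gives $k\le 4$. A component is only subnormal, so ``$Q\trianglelefteq A$'' is unjustified; apply Clifford theory to $E(A)$ and use the tensor decomposition instead. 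In Step~2, $23\mid|\operatorname{Out}|$ for a group of Lie type can also come from diagonal automorphisms (e.g.\ $\operatorname{PSL}_{23}(q)$ with $q\equiv 1\pmod{23}$), not only from field automorphisms. The degree bound still excludes these cases, but the paper's route is cleaner: by the Schreier conjecture $\operatorname{Out}(Q_i/Z(Q_i))$ is solvable, so the nonabelian simple group $\operatorname{PSL}_2(23)$ cannot act on $Q_i$ by non-inner automorphisms. Finally, the $M_{23}$ sentence is vacuous, since $\operatorname{PSL}_2(23)$ does not embed in $M_{23}$ at all. The paper's observation that $\operatorname{PSL}_2(23)$ has no faithful permutation action on $23$ points disposes of $M_{23}$ and $A_{23}$ directly.
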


\begin{proof}
We give the proof for
\(\operatorname{Aut}_{\mathbb Z}(L,B)\); the proof for
\(\operatorname{Aut}_{\mathbb Z}(L_0,B_0)\) is identical.

Let $Q_1,\ldots,Q_r$ be its quasisimple subnormal subgroups.
By Lemma~\ref{grp:lem:fitting}, the generalized Fitting subgroup of $\operatorname{Aut}_{\mathbb Z}(L,B)$ is
\[
F(\operatorname{Aut}_{\mathbb Z}(L,B))Q_1\cdots Q_r=\pm Q_1\cdots Q_r.
\] 
Since \(Q_1\cdots Q_r\) is normal, Lemma~\ref{grp:lem:characters} implies that the restriction of
\(L\otimes_{\mathbb Z}\mathbb Q\) to \(Q_1\cdots Q_r\) is a direct sum of copies of a single faithful irreducible  representation. 
The irreducible complex representations occurring after extension to \(\mathbb C\) are Galois conjugate and faithful. Their common degree divides \(88\) and is the product of the degrees of faithful irreducible representations of \(Q_1,\ldots,Q_r\). Since each of these degrees is at least \(2\) and \(88=2^3\cdot11\), we have \(r\le4\). 
The group $\operatorname{PSL}_2(23)$ acts by conjugation on the set $\{Q_1,\ldots,Q_r\}$. 
Since the smallest index of a proper subgroup of \(\operatorname{PSL}_2(23)\) is \(24\), it cannot act nontrivially on the set \(\{Q_1,\ldots,Q_r\}\) with \(r\le4\). 
By the Schreier conjecture, which follows from the classification of finite simple groups, the outer automorphism group of \(Q_i/Z(Q_i)\) is solvable. Hence the conjugation action of $\operatorname{PSL}_2(23)$ on each $Q_i$ is inner.

Since the distinct \(Q_i\) commute with one another, for every
\(g\in\operatorname{PSL}_2(23)\) we may choose \(q_i\in Q_i\) such that conjugation by \(g\) on \(Q_i\) agrees with conjugation by \(q_i\), and then \(q=q_1\cdots q_r\) has the property that \(gq^{-1}\) centralizes \(\pm Q_1\cdots Q_r\). 
It follows from the self-centralizing property of the generalized Fitting subgroup that $gq^{-1} \in \pm Q_1\cdots Q_r$. 
We therefore obtain
$\operatorname{PSL}_2(23) \subset \pm Q_1\cdots Q_r$.
Since $\operatorname{PSL}_2(23)$ is perfect, it follows that
$\operatorname{PSL}_2(23)\subset Q_1\cdots Q_r$.

Since
\[
(Q_1\cdots Q_r)/Z(Q_1\cdots Q_r)
\cong
\prod_{i=1}^r Q_i/Z(Q_i),
\]
for each \(i\) we obtain a homomorphism $\operatorname{PSL}_2(23)\to Q_i/Z(Q_i)$.
Since \(\operatorname{PSL}_2(23)\) is simple, each such homomorphism is either trivial or injective.
 At least one is nontrivial, and its target has order divisible by $23$. By Lemma~\ref{grp:lem:list}, the corresponding $Q_i$ has a faithful irreducible complex representation of degree at least $11$. There cannot be two such factors, since $11^2>88$. Hence there is a unique $Q_i$ on which $\operatorname{PSL}_2(23)$ has a nontrivial projection. Since all other projections are trivial and $\operatorname{PSL}_2(23)$ is perfect, we obtain $\operatorname{PSL}_2(23)\subset  Q_i$.

It remains to determine $Q_i$ using Lemma~\ref{grp:lem:list}. If $Q_i\cong M_{23}$ or $A_{23}$, the inclusion $\operatorname{PSL}_2(23)\subset Q_i \hookrightarrow S_{23}$ would give a faithful permutation representation of $\operatorname{PSL}_2(23)$ of degree $23$, which is impossible. Also, $\operatorname{SL}_2(23)$ cannot contain $\operatorname{PSL}_2(23)$, since such a subgroup would have index two, contradicting the perfectness of $\operatorname{SL}_2(23)$. 
Suppose that \(Q_i\cong A_{45}\) or \(A_{89}\). Consider the natural action of \(A_n\) on \(\{1,\ldots,n\}\), where \(n=45\) or \(89\). By the subgroup classification of \(\operatorname{PSL}_2(23)\), every nontrivial orbit of \(\operatorname{PSL}_2(23)\) of size at most \(89\) has size \(24\). Since neither \(45\) nor \(89\) is divisible by \(24\), the restricted action on \(\{1,\ldots,n\}\) is not transitive. 
If \(\operatorname{PSL}_2(23)\) has \(m>1\) orbits on \(\{1,\ldots,n\}\), then the fixed subspace of the permutation representation on \(\mathbb C^n\) has dimension \(m\). After removing the one-dimensional space of constant vectors, the resulting degree \(n-1\) representation has a fixed subspace of dimension \(m-1>0\). Hence 
$(L\otimes_{\mathbb Z}\mathbb Q)^{\operatorname{PSL}_2(23)}\neq0$, 
contradicting Lemma~\ref{grp:lem:characters}.

Thus $Q_i\cong\operatorname{PSL}_2(23)$, and hence $Q_i=\operatorname{PSL}_2(23)$. Moreover, it is the unique quasisimple subnormal subgroup whose simple quotient has order divisible by $23$. 
Therefore it is preserved under conjugation by every element of $\operatorname{Aut}_{\mathbb Z}(L,B)$, and hence
$\operatorname{PSL}_2(23)\triangleleft\operatorname{Aut}_{\mathbb Z}(L,B)$. 
\end{proof}

\begin{lemma}\label{grp:lem:semilinear}
Every element of $\operatorname{Aut}_{\mathbb Z}(L,B)$ and
$\operatorname{Aut}_{\mathbb Z}(L_0,B_0)$ is either $\mathcal O_F$-linear or conjugate-linear.
\end{lemma}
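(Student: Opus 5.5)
The plan is to use the normality of $\operatorname{PSL}_2(23)$ (Proposition~\ref{grp:prop:Nnormal}) together with the description of its commutant in Lemma~\ref{grp:lem:characters}. Write $N:=\operatorname{PSL}_2(23)$ and $V:=L\otimes_{\mathbb Z}\mathbb Q$, and let $g\in\operatorname{Aut}_{\mathbb Z}(L,B)$. Since $N$ is normal, conjugation by $g$ maps $N$ to itself. Consequently conjugation by $g$ preserves the commutant $\operatorname{End}_{\mathbb Q[N]}(V)$. By Lemma~\ref{grp:lem:characters} this commutant is $\operatorname{End}_F(E\otimes_{\mathcal O_F}F)\cong M_4(F)$. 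The center of the commutant is therefore $F$, acting by scalar multiplication on $V$. Conjugation by $g$ gives a $\mathbb Q$-algebra automorphism of $M_4(F)$, and such an automorphism must preserve the center $F$. It thus restricts to a field automorphism of $F$, which is either the identity or complex conjugation. Hence $g\alpha g^{-1}\in\{\alpha,\overline{\alpha}\}$. In the first case $g$ is $\mathcal O_F$-linear, and in the second it is conjugate-linear, by the characterization at the start of \S\ref{grp:sec:factors}.

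Two details need checking. First, we must confirm that scalar multiplication by $\alpha$ on $V$ really is the element of the center. The identification $\operatorname{End}_{\mathbb Q[N]}(V)=\operatorname{End}_F(E\otimes F)$ was obtained through $V\cong (J\otimes\mathbb Q)^4$ with $\operatorname{End}_{\mathbb Q[N]}(J\otimes\mathbb Q)=F$, where $F$ acts by scalars on $J$. Under the tensor product over $\mathcal O_F$, scalars on $J$ and scalars on $E$ agree, so the center is exactly the $F$-scalar action on $L$. Second, we must confirm that $g$ maps $\alpha$ to an element of $F$ and not merely of $F\otimes\mathbb R$. This holds because $g\alpha g^{-1}$ is an endomorphism of $V$ defined over $\mathbb Q$, and it commutes with $N$ because $N$ is normalized by $g$. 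So it lies in the center $F$, and it satisfies the same minimal polynomial $x^2-x+6$ as $\alpha$. The only roots of that polynomial in $F$ are $\alpha$ and $\overline{\alpha}=1-\alpha$.

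The argument for $L_0$ is identical, since Lemma~\ref{grp:lem:characters} and Proposition~\ref{grp:prop:Nnormal} hold verbatim for $L_0$. The main point of care is the first step, namely that normality of $N$ (rather than just normalizing its image in some quotient) is what is needed to transport the commutant. This is exactly the content of Proposition~\ref{grp:prop:Nnormal}, so no further obstacle is expected; the rest is formal.
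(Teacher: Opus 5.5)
Your proposal is correct and follows the paper's own route: normality of $\operatorname{PSL}_2(23)$ lets conjugation by $g$ preserve the commutant $\operatorname{End}_F(E\otimes_{\mathcal O_F}F)$, hence its center $F$, which forces $g\alpha g^{-1}\in\{\alpha,\overline{\alpha}\}$. Your two extra checks (that this center is the $F$-scalar action on $L$, and that $g\alpha g^{-1}$ is a root of $x^2-x+6$ lying in $F$) make explicit what the paper leaves implicit.
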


\begin{proof}
We prove the assertion for \(L\); the proof for \(L_0\) is identical.
By Proposition~\ref{grp:prop:Nnormal}, every element of
\(\operatorname{Aut}_{\mathbb Z}(L,B)\) normalizes
\(\operatorname{PSL}_2(23)\). Hence conjugation preserves
\[
\operatorname{End}_{\mathbb Q[\operatorname{PSL}_2(23)]}
(L\otimes_{\mathbb Z}\mathbb Q)
\cong
\operatorname{End}_F(E\otimes_{\mathcal O_F}F). 
\]
It therefore preserves the center \(F\) of this algebra. Thus every element of
\(\operatorname{Aut}_{\mathbb Z}(L,B)\) induces an automorphism of \(F\), and is either
\(\mathcal O_F\)-linear or conjugate-linear.
\end{proof}

\subsection{\texorpdfstring{Determination of the isometry groups of \((L,B)\) and \((L_0,B_0)\)}{Determination of the isometry groups of L and L0}}
\label{grp:sec:full}

\begin{lemma}\label{grp:lem:steinitz}
The Steinitz classes of \(J\) and \(L\) are both
\([\overline{\mathfrak p}]\ne1\), whereas \(J_0\) and \(L_0\) have trivial Steinitz class.
In particular, \(L\) admits no conjugate-linear module automorphism.
\end{lemma}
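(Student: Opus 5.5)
The plan is to compute Steinitz classes through the relative norm. For a finite extension $K/F$ and a fractional ideal $I$ of $\mathcal O_K$, regarded as a projective $\mathcal O_F$-module, the Steinitz class satisfies
\[
\operatorname{St}(I)=[\operatorname{N}_{K/F}(I)]\cdot\operatorname{St}(\mathcal O_K).
\]
I will apply this with $K=\Q(\zeta_{23})$. I then pass to tensor products, and finally argue the nonexistence of conjugate-linear automorphisms by comparing $\operatorname{St}(L)$ with its complex conjugate.

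\textbf{Step 1: $\operatorname{St}(\mathcal O_K)$ is trivial.} Recall that $\operatorname{St}(\mathcal O_K)^2$ is the class of the relative discriminant $\mathfrak d_{K/F}=\operatorname{N}_{K/F}(\mathfrak D_{K/F})$. The relative different is generated by $\delta_{11}$, so $\mathfrak d_{K/F}=\operatorname{N}_{K/F}(\delta_{11})\mathcal O_F$ is principal. Hence $\operatorname{St}(\mathcal O_K)^2=1$. Since $\Cl(F)\cong C_3$ has odd order, $\operatorname{St}(\mathcal O_K)=1$.

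\textbf{Step 2: the classes of $J$ and $J_0$.} By Step 1, $\operatorname{St}(J)=[\operatorname{N}_{K/F}(J)]$. The ideal $J=(47,\zeta_{23}-21)$ is a degree-one prime over $47$, so $\operatorname{N}_{K/F}(J)=\mathfrak q$ is one of the two primes of $F$ above $47$. The prime $47$ is not of the form $x^2+xy+6y^2$: the values are $6,8,12,\ldots$ and $47$ is not attained. Hence $\mathfrak q$ is nonprincipal, so $[\mathfrak q]\in\{[\mathfrak p],[\bar{\mathfrak p}]\}$.

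To decide which, I would compute the generator of $\mathfrak q^3$, or equivalently check whether $\mathfrak q\bar{\mathfrak p}^{-1}$ is principal. Concretely, I would find which root of $x^2-x+6\equiv0\pmod{47}$ the element $\alpha$ reduces to modulo $J$, using $\zeta_{23}\equiv21$ and $\sqrt{-23}\in\Z[\zeta_{23}]$ as a Gauss sum. This gives $\operatorname{St}(J)=[\bar{\mathfrak p}]$, consistent with the earlier remark that $[J]=[\mathfrak p\mathcal O_K]$ in $\Cl(K)$.

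For $J_0$, note that $J_0=\bar{\mathfrak p}J$, and $\bar{\mathfrak p}\mathcal O_K$ has relative norm $\bar{\mathfrak p}^{11}$. Therefore
\[
\operatorname{St}(J_0)=[\bar{\mathfrak p}]^{12}=1.
\]
Alternatively, $J_0$ is principal, so $J_0\cong\mathcal O_K$ as an $\mathcal O_F$-module, and Step 1 applies directly.

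\textbf{Step 3: the tensor products.} Since $E$ is free of rank $4$, we have $L\cong J^{4}$ as $\mathcal O_F$-modules. This gives
\[
\operatorname{St}(L)=[\bar{\mathfrak p}]^4=[\bar{\mathfrak p}],
\]
and similarly $\operatorname{St}(L_0)=1$.

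\textbf{Step 4: no conjugate-linear automorphism of $L$.} Suppose $g$ is a conjugate-linear module automorphism of $L$, and let $(\mathfrak a_i,e_i)$ be a pseudo-basis of $L$. Then $g(\mathfrak a_ie_i)=\bar{\mathfrak a}_ig(e_i)$, so $(\bar{\mathfrak a}_i,g(e_i))$ is a pseudo-basis of $g(L)=L$. This gives $\operatorname{St}(L)=\overline{\operatorname{St}(L)}$, that is, $[\bar{\mathfrak p}]=[\mathfrak p]$. But $[\mathfrak p]=[\bar{\mathfrak p}]^{-1}\neq[\bar{\mathfrak p}]$ because $\Cl(F)$ has order $3$, a contradiction.

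The main obstacle is Step 2: pinning down which of $[\mathfrak p]$ and $[\bar{\mathfrak p}]$ equals $[\operatorname{N}_{K/F}(J)]$. This requires an explicit identification of $\alpha$ modulo the prime $J$, or an explicit principal generator of $\mathfrak q\,\mathfrak p^{-1}$ or of $\mathfrak q\,\bar{\mathfrak p}^{-1}$. That is a finite computation, but it is sign-sensitive. The final assertion about conjugate-linear automorphisms only needs $\operatorname{St}(L)\ne1$, which holds whichever of the two classes occurs.
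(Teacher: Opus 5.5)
Your Steps 1, 3 and 4 are sound. Step 1 is exactly the paper's argument. Step 4 usefully spells out the ``in particular'' that the paper leaves implicit: a conjugate-linear automorphism would force $\operatorname{St}(L)=\overline{\operatorname{St}(L)}=\operatorname{St}(L)^{-1}$, which is impossible for a nontrivial class of $\Cl(F)\cong C_3$.

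The gap is in Step 2. The lemma asserts specifically that $\operatorname{St}(J)=[\bar{\mathfrak p}]$, and you never establish this. You describe a computation (reducing $\alpha$ modulo $J$) and then state its outcome without performing it. Your first derivation, $\operatorname{St}(J_0)=[\bar{\mathfrak p}]^{12}=1$, also rests on that unproved value, so as written it is circular. Only your alternative, the principality of $J_0$, is independent.

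That alternative is the missing idea, but it should be used in the opposite direction, which is how the paper argues.
\begin{itemize}
\item Since $J_0$ is principal (the paper exhibits a generator), $J_0\cong\mathcal O_{\Q(\zeta_{23})}$ as $\mathcal O_F$-modules, and Step 1 gives $\operatorname{St}(J_0)=1$.
\item Since $J_0=\bar{\mathfrak p}J$ with $J$ of rank $11$, you get $1=\operatorname{St}(J_0)=[\bar{\mathfrak p}]^{11}\operatorname{St}(J)$.
\item Hence $\operatorname{St}(J)=[\bar{\mathfrak p}]^{-11}=[\bar{\mathfrak p}]$, and no sign-sensitive identification is needed.
\end{itemize}
Equivalently, applying the norm map $\Cl(\Q(\zeta_{23}))\to\Cl(F)$ to the earlier relation $[J]=[\mathfrak p\mathcal O_{\Q(\zeta_{23})}]$ gives $[\mathfrak p]^{11}=[\bar{\mathfrak p}]$.

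Your direct computation would also close the gap if you carried it out.
\begin{itemize}
\item With $\zeta_{23}\equiv21\pmod J$, the quadratic Gauss sum gives $\sqrt{-23}\equiv20$, so $\alpha\equiv34\pmod J$.
\item The factorization $(5+3\alpha)=\bar{\mathfrak p}\,(47,\alpha-14)$ puts $(47,\alpha-14)$ in the class $[\mathfrak p]$.
\item Hence its conjugate $(47,\alpha-34)=J\cap\mathcal O_F$ lies in the class $[\bar{\mathfrak p}]$.
\end{itemize}
As submitted, however, that step is a promise of a computation rather than a proof.
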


\begin{proof}
Let $\operatorname{St}(M)$ denote the Steinitz class of a fintely generated torsion free $\mathcal{O}_F$-module of $M$. 
The relative discriminant of $\Q(\zeta_{23})/F$ is the principal ideal $(\sqrt{-23})^{10}$.
It follows from $\Cl(F)\cong C_3$ that $\operatorname{St}(\mathcal O_{\Q(\zeta_{23})})=1$.
The principality of $J_0$ over $\mathcal O_{\Q(\zeta_{23})}$ therefore gives $\operatorname{St}(J_0)=1$.
Now $J_0=\overline{\mathfrak p}J$ and $\operatorname{rank}_{\mathcal O_F}J=11$, so
\[
1=\operatorname{St}(J_0)=[\overline{\mathfrak p}]^{11}\operatorname{St}(J),
\qquad \operatorname{St}(J)=[\overline{\mathfrak p}]\ne1.
\]
As $E$ is free of rank $4$, we have $\operatorname{St}(L)=\operatorname{St}(J)^4=[\overline{\mathfrak p}]$. 
\end{proof}

\begin{theorem}\label{grp:thm:fullgroups}
We have
\[
\operatorname{Aut}_{\mathbb Z}(L,B) = \operatorname{Aut}_{\mathcal O_F}(L)
= 
\operatorname{Aut}_{\mathcal O_F}(E,h_4)
\times
\operatorname{PSL}_2(23). 
\]
Moreover, $\operatorname{Aut}_{\mathcal O_F}(L_0) = \operatorname{Aut}_{\mathcal O_F}(E,h_4)$ and 
\[
\operatorname{Aut}_{\mathbb Z}(L_0,B_0)
\cong
\left(
\operatorname{Aut}_{\mathcal O_F}(E,h_4)
\times
\operatorname{PSL}_2(23)
\right)
\rtimes C_2.
\]
Here the nontrivial element of $C_2$ acts on
$\operatorname{Aut}_{\mathcal O_F}(E,h_4)$ by complex conjugation and on
$\operatorname{PSL}_2(23)$ by its nontrivial outer automorphism.
\end{theorem}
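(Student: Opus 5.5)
The plan is to combine the structural results of this appendix. By Lemma~\ref{grp:lem:semilinear}, every element of $\operatorname{Aut}_{\mathbb Z}(L,B)$ or $\operatorname{Aut}_{\mathbb Z}(L_0,B_0)$ is $\mathcal O_F$-linear or conjugate-linear. So $\operatorname{Aut}_{\mathcal O_F}(L)$ has index at most $2$ in $\operatorname{Aut}_{\mathbb Z}(L,B)$, and similarly for $L_0$; moreover, by the decomposition formula for $h$ in terms of $\operatorname{Tr}_{F/\mathbb Q}h$, a $\mathbb Z$-isometry commuting with $\alpha$ is an $h$-isometry. I would first determine $\operatorname{Aut}_{\mathcal O_F}(L)$ and then decide whether a conjugate-linear isometry exists.

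\emph{Step 1 (the linear part).} Let $N:=\operatorname{PSL}_2(23)$ act through $J$ (resp.\ $J_0$). By Proposition~\ref{grp:prop:Nnormal}, $N$ is normal, so each $g\in\operatorname{Aut}_{\mathcal O_F}(L)$ induces by conjugation an automorphism of $N$. Since $\operatorname{Out}(N)=C_2$, we have $\operatorname{Aut}(N)=\operatorname{PGL}_2(23)$. There are two cases.

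If the induced automorphism is inner, then multiplying $g$ by an element of $N$ gives an element of $C(N)=G_4$ (Lemma~\ref{grp:lem:centralizer}).

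If it is outer, consider $L\otimes\mathbb Q\cong (J\otimes\mathbb Q)^4$. The outer automorphism of $N$ swaps the characters $\psi$ and $\bar\psi$, whose character field is $F$. An $F$-linear $g$ therefore cannot twist $J\otimes_{\mathcal O_F}F$, which affords $\psi$ (in each copy), into the module affording $\bar\psi$. So an $\mathcal O_F$-linear $g$ induces an inner automorphism.

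It follows that $\operatorname{Aut}_{\mathcal O_F}(L)=G_4\cdot N$. This product is direct modulo the identification of $-1$, which gives $G_4\times\operatorname{PSL}_2(23)$ as computed at the start of \S\ref{grp:sec:primitive}. The same argument applies to $L_0$. (I read the assertion ``$\operatorname{Aut}_{\mathcal O_F}(L_0)=\operatorname{Aut}_{\mathcal O_F}(E,h_4)$'' as a misprint for this product.)

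\emph{Step 2 (conjugate-linear elements).} For $L$: a conjugate-linear isometry $\sigma$ would be an $\mathcal O_F$-module isomorphism $L\to\bar L$, where $\bar L$ is $L$ with the conjugated scalar action. Then $\operatorname{St}(L)=\overline{\operatorname{St}(L)}$. But $\operatorname{St}(L)=[\bar{\mathfrak p}]$ by Lemma~\ref{grp:lem:steinitz}, and $\overline{[\bar{\mathfrak p}]}=[\mathfrak p]=[\bar{\mathfrak p}]^{-1}\neq[\bar{\mathfrak p}]$ because $\operatorname{Cl}(F)\cong C_3$. Hence $\operatorname{Aut}_{\mathbb Z}(L,B)=\operatorname{Aut}_{\mathcal O_F}(L)$.

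For $L_0$: take $\tau:=c\otimes\iota$, where $c$ is complex conjugation on $E$ (Proposition~\ref{grp:prop:Egroup}) and $\iota$ is the conjugate-linear involution of $J_0$ from Proposition~\ref{grp:prop:Jgroups}(iii). The map $\tau$ is well defined on $E\otimes_{\mathcal O_F}J_0$ because both factors are conjugate-linear, so $\lambda x\otimes y\mapsto\bar\lambda\, c(x)\otimes\iota(y)$ is compatible with the tensor relations. It is a $B_0$-isometry since $h^{(0)}(\tau u,\tau v)=\overline{h^{(0)}(u,v)}$, and it is an involution. Conjugation by $\tau$ acts on $G_4$ as $c$ and on $N$ as the outer automorphism. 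Thus $\operatorname{Aut}_{\mathbb Z}(L_0,B_0)=(G_4\times N)\rtimes\langle\tau\rangle$, with index exactly $2$ by Lemma~\ref{grp:lem:semilinear}.

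\emph{Main obstacle.} The delicate point is the claim in Step 1 that an $F$-linear element induces only inner automorphisms of $N$. I would argue this by restricting the action to $N$ and comparing characters over $F$: the $F[N]$-module $L\otimes_{\mathcal O_F}F$ affords $4\psi$, and its twist by an outer automorphism affords $4\bar\psi\neq 4\psi$. This shows that no $F$-linear map can intertwine the two actions, and hence such a $g$ cannot induce an outer automorphism of $N$.
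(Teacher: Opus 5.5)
Your proposal is correct and follows essentially the paper's argument: normality of $\operatorname{PSL}_2(23)$ together with the centralizer $G_4$, the $\psi$ versus $\bar\psi$ character argument excluding outer automorphisms for $\mathcal O_F$-linear elements, the Steinitz class ruling out conjugate-linear isometries of $L$, and the involution $c\otimes\iota$ for $L_0$. The paper packages the first part as an order bound via the conjugation map to $\operatorname{PGL}_2(23)$ rather than your inner/outer decomposition, and its proof in fact establishes $\operatorname{Aut}_{\mathcal O_F}(L_0)=\operatorname{Aut}_{\mathcal O_F}(L)$, which confirms your reading of the misprint.
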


\begin{proof}
By Proposition~\ref{grp:prop:Nnormal}, $\operatorname{PSL}_2(23)$ is normal in both
$\operatorname{Aut}_{\mathbb Z}(L,B)$ and
$\operatorname{Aut}_{\mathbb Z}(L_0,B_0)$.
Conjugation therefore gives a homomorphism from each of these groups to
$\operatorname{Aut}(\operatorname{PSL}_2(23))\cong\operatorname{PGL}_2(23)$.
By Lemma~\ref{grp:lem:centralizer}, the kernel is
$G_4$, which has order $240$.
Hence
\[
|\operatorname{Aut}_{\mathbb Z}(L,B)|,\,
|\operatorname{Aut}_{\mathbb Z}(L_0,B_0)|
\le
240\cdot|\operatorname{PGL}_2(23)|
=
240\cdot12144.
\]
For \(L\), Lemmas~\ref{grp:lem:semilinear} and~\ref{grp:lem:steinitz} give
\(\operatorname{Aut}_{\mathbb Z}(L,B)=\operatorname{Aut}_{\mathcal O_F}(L)\).
Let \(\psi\) and \(\bar\psi\) be the two Galois-conjugate irreducible characters of degree \(11\) of \(\operatorname{PSL}_2(23)\). For a fixed embedding \(F\hookrightarrow\mathbb C\), the representation
\(L\otimes_{\mathcal O_F}\mathbb C\) affords \(4\psi\), while the conjugate embedding affords \(4\bar\psi\). Since the nontrivial outer automorphism of \(\operatorname{PSL}_2(23)\) exchanges \(\psi\) and \(\bar\psi\), an \(\mathcal O_F\)-linear automorphism of \(L\) cannot induce it. Therefore the image of $\operatorname{Aut}_{\mathcal O_F}(L) \to \operatorname{PGL}_2(23)$ is contained in
\(\operatorname{PSL}_2(23)\), and hence
\[
|\operatorname{Aut}_{\mathbb Z}(L,B)|\le240\cdot6072.
\]
The tensor construction already gives a subgroup
isomorphic to
$G_4 \times\operatorname{PSL}_2(23)$
of this order. Hence
\[
\operatorname{Aut}_{\mathbb Z}(L,B)
=
\operatorname{Aut}_{\mathcal O_F}(L)
\cong
G_4 
\times
\operatorname{PSL}_2(23).
\]
The argument of the proof of Proposition~\ref{grp:prop:Jgroups}(ii) gives 
$\operatorname{Aut}_{\mathcal O_F}(L_0)=\operatorname{Aut}_{\mathcal O_F}(L)$. 
Since $E$ and $J_0$ admit conjugate-linear involutions, their tensor product gives an element of
$\operatorname{Aut}_{\mathbb Z}(L_0,B_0)$ inducing the nontrivial outer automorphism of
$\operatorname{PSL}_2(23)$.
Thus $\operatorname{Aut}_{\mathbb Z}(L_0,B_0)$ contains a subgroup of order
$240\cdot12144$, which is the upper bound above. Hence 
$\operatorname{Aut}_{\mathbb Z}(L_0,B_0)
\cong
\left(
G_4 
\times
\operatorname{PSL}_2(23)
\right)
\rtimes C_2$. 
\end{proof}

\bibliographystyle{amsalpha}
\bibliography{88extremal}

\end{document}